\newtheorem{thm}{Th\'eor\`eme}[section]
\newtheorem{dfn}[thm]{D\'efinition}
\newtheorem{exem}[thm]{Exemple}
\newtheorem{rem}[thm]{Remarque}
\newtheorem{rems}[thm]{Remarques}
\newtheorem{prop}[thm]{Proposition}
\newtheorem{lem}[thm]{Lemme}
\newtheorem{coro}[thm]{Corollaire}
\newtheorem*{thm*}{Th\'eor\`eme}
\newtheorem*{dfn*}{D\'efinition}
\newtheorem*{defiprop*}{D\'efinition-Proposition}
\newtheorem*{exem*}{Exemple}
\newtheorem*{rem*}{Remarque}
\newtheorem*{rems*}{Remarques}
\newtheorem*{prop*}{Proposition}
\newtheorem*{lem*}{Lemme}
\newtheorem*{coro*}{Corollaire}
\begin{document}

\title{Applications des immeubles en thÈorie des reprÈsentations}
\author{St\'ephane Gaussent, Nicole Bardy, Cyril Charignon  et Guy Rousseau}
\maketitle
\bigskip

\section{Introduction}
\label{seIntro}

Ce rapport a pour but d'exposer le lien entre la th\'eorie des repr\'esentations d'un groupe semi-simple et la g\'eom\'etrie d'un immeuble affine. Celui-ci est en fait associ\'e au groupe semi-simple dual de Langlands du pr\'ec\'edent. Comme nous parlerons plus de cette g\'eom\'etrie, on privil\'egie ce dernier groupe dans les notations qui suivent.
$$
\begin{array}{cl}
G & \hbox{groupe semi-simple sur le corps des complexes } \mathbb C \hbox{, associ\'e \`a }(X,Y,\Phi,\Phi^\vee) \\
T & \hbox{tore maximal dans } G\\
X & \hbox{r\'eseau des caract\`eres de } T\\
Y & \hbox{r\'eseau des cocaract\`eres de } T\\
\Phi & \hbox{syst\`eme de racines de } (G,T), \hbox{ contenu dans } X  \hbox{ et suppos\'e irr\'eductible}\\
\{\alpha_i\}_{i\in I} & \hbox{un choix de racines simples dans } \Phi\\
P^\vee & \hbox{r\'eseau des copoids, dual de } Q = \oplus \mathbb Z\alpha_i\\
\Phi^\vee & \hbox{syst\`eme des coracines de } (G,T), \hbox{ contenu dans } Y\\
Q^\vee & \hbox{r\'eseau des coracines de } G, Q^\vee = \oplus \mathbb Z\alpha^\vee_i\subset Y\subset P^\vee\\
G^\vee & \hbox{dual de Langlands de } G\hbox{, groupe semi-simple complexe  associ\'e \`a }(Y,X,\Phi^\vee,\Phi)\\
V(\lambda) & \hbox{repr\'esentation irr\'eductible de } G^\vee \hbox{ de plus haut poids } \lambda\in Y^+=\{\lambda\in Y\mid\alpha_i(\lambda)\geq0\}\\
\mathcal I & \hbox{immeuble de Bruhat-Tits associ\'e \`a } G \hbox{ et au corps des s\'eries de Laurent } \mathscr K = \mathbb C(\!(t)\!)\\

\end{array}
$$

Nous reprenons ici, en grande partie, les expos\'es d'un groupe de travail qui a eu lieu \`a Nancy en 2008-2009 sur le th\'eor\`eme de <<saturation>>. Ce th\'eor\`eme a \'et\'e d\'emontr\'e par Kapovich et Millson \cite{KM}, \`a la suite d'une s\'erie d'articles avec Leeb (\cite{KLM1}, \cite{KLM2}, \cite{KLM3},...) ; il s'\'enonce comme suit :
\begin{thm}
\label{thSat}
On note $k = k_\Phi$ le plus petit multiple commun des coefficients de la plus grande racine de $\Phi$.  Soient $\lambda,\mu$ et $\nu$ des copoids dominants tels que $\lambda+\mu+\nu \in Q^\vee$. S'il existe $N\in\mathbb N^*$ tel que $\big (V(N\lambda)\otimes V(N\mu)\otimes V(N\nu)\big )^{G^\vee} \ne \{0\}$ alors $\big (V(k^2\lambda)\otimes V(k^2\mu)\otimes V(k^2\nu)\big )^{G^\vee} \ne \{0\}$.
\end{thm}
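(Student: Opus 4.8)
The plan is to translate the representation-theoretic statement into a problem about closed geodesic triangles in the Bruhat--Tits building $\mathcal I$, and then to solve that geometric problem by means of the path model (folded galleries / Hecke paths).

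\textbf{Step 1 (dictionary between representations and the building).} I would first pass from the $G^\vee$-invariants to structure constants of the spherical Hecke algebra $\mathcal H\big(G(\mathscr K),G(\mathscr O)\big)$, with $\mathscr O=\mathbb C[[t]]$, via the Satake isomorphism: the multiplicity of the trivial representation in $V(\lambda)\otimes V(\mu)\otimes V(\nu)$ is the structure constant counting, after retraction, configurations of three special vertices $x_0,x_1,x_2$ of $\mathcal I$ whose consecutive vector-valued distances are $\lambda,\mu,\nu$. Writing $\Delta$ for the dominant chamber and recording each side by the dominant representative of its direction, this gives the equivalence
\[
\big(V(\lambda)\otimes V(\mu)\otimes V(\nu)\big)^{G^\vee}\ne\{0\}
\quad\iff\quad
\mathcal I\text{ contains a closed geodesic triangle with special vertices whose }\Delta\text{-side-lengths are }\lambda,\mu,\nu.
\]
The condition $\lambda+\mu+\nu\in Q^\vee$ expresses the triviality of the central character of $G^\vee$ on the invariants and is exactly what allows the triangle to close up in the coweight lattice; since $k^2(\lambda+\mu+\nu)\in Q^\vee$ as well, it is available for the target multiple.

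\textbf{Step 2 (homogeneity).} The conditions that guarantee the existence of a geodesic triangle with \emph{arbitrary} (real) vertices and $\Delta$-side-lengths $\alpha,\beta,\gamma$ --- the generalized triangle inequalities of Kapovich--Leeb--Millson --- cut out a homogeneous rational polyhedral cone in $(Y\otimes\mathbb R)^3$. Hence, starting from the hypothesis and Step 1, the special-vertex triangle with side-lengths $N\lambda,N\mu,N\nu$ rescales (the building being a $\mathrm{CAT}(0)$ Euclidean space, on which dilations act) to a geodesic triangle with side-lengths exactly $\lambda,\mu,\nu$, whose vertices are now only real, no longer special.

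\textbf{Step 3 (the crux: from a real triangle to a special one).} Retracting this triangle onto the model apartment along a retraction centred at a chamber at infinity produces a folded billiard path, i.e. a \emph{Hecke path} whose breakpoints are controlled by the affine Weyl group. To reconstruct a configuration of \emph{special} vertices I must do two things: move the breakpoints and endpoints onto special vertices, and check that at each breakpoint the pair (incoming, outgoing direction) is joined by a chain in the spherical building there, so that the folded path can be unfolded back into an honest geodesic. Both the integrality of the vertices and the local chain condition in the spherical residues introduce denominators, and each is bounded by $k=k_\Phi$; dilating the whole triangle by $k^2$ clears them simultaneously. Unfolding the resulting integral Hecke path then yields a closed geodesic triangle of $\mathcal I$ with special vertices and $\Delta$-side-lengths $k^2\lambda,k^2\mu,k^2\nu$, and running Step 1 backwards gives $\big(V(k^2\lambda)\otimes V(k^2\mu)\otimes V(k^2\nu)\big)^{G^\vee}\ne\{0\}$.

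\textbf{Main obstacle.} The entire difficulty sits in Step 3, in the local analysis at the breakpoints of the Hecke path. One must prove that after dilation by $k^2$ the folded path genuinely satisfies the chain condition at every special vertex and therefore unfolds to a geodesic; this is precisely where the sharp factor $k^2$ (rather than $k$ or $k^3$) has to be extracted, from the saturation of the rank-one and rank-two local problems in the spherical building combined with the global closing-up condition $\lambda+\mu+\nu\in Q^\vee$. This local-to-global passage is the heart of the Kapovich--Millson argument.
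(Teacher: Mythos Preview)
Your high-level architecture --- translate to triangles in $\mathcal I$, use saturation of the side-length cone to divide by $N$, then dilate to recover integrality and translate back --- matches the paper's. But there is a genuine gap, and it sits precisely where you try to ``run Step~1 backwards''.

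The equivalence you assert in Step~1 is not what the Satake isomorphism gives. Satake identifies the spherical Hecke algebra with $R(G^\vee)$, but the double-coset basis $[G(\mathcal O)t^\lambda G(\mathcal O)]$ does \emph{not} go to the irreducible character $\chi_\lambda$; the change of basis is upper-triangular with nontrivial lower terms. Consequently the Hecke structure constant (which genuinely counts special-vertex triangles) is \emph{not} the tensor multiplicity, and the implication ``special-vertex triangle with side-lengths $\alpha,\beta,\gamma$ $\Rightarrow$ $(V(\alpha)\otimes V(\beta)\otimes V(\gamma))^{G^\vee}\ne0$'' is exactly the hard direction that is \emph{not} available for free. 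If it were, your own argument would already give saturation constant $k$ rather than $k^2$: one dilation by $k$ turns vertices (guaranteed by $\lambda+\mu+\nu\in Q^\vee$ and the fixed-point theorem) into special vertices, and you would be done. The paper is explicit about this obstruction: after producing a special-vertex triangle with sides $k\lambda,k\mu,k\nu$, it retracts onto $\mathbb A$ centred at an \emph{alc\^ove} (not a chamber at infinity), obtains a Hecke path, and notes ``malheureusement, en g\'en\'eral, ce n'est pas un chemin LS''. The whole content of the paper's \'etape~5 is to close this gap: replace the side by a concatenation of fundamental-coweight segments (the generalized path $\pi_\eta$), so that after retraction the breakpoints lie at \emph{vertices}; refold into $C^v$; then a \emph{second} dilation by $k$ makes those breakpoints special, whence (by their Lemme~\ref{leGrossier}) Hecke $=$ LS, and Littelmann's Th\'eor\`eme~\ref{thDecomp} yields the nonvanishing. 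Your Step~3 gestures at two sources of denominators, but the second one is not a ``local chain condition'' --- the chain condition is automatic for a retracted geodesic --- it is the passage Hecke $\Rightarrow$ LS at the breakpoints, and that is what forces the edge-path trick and the second factor of $k$.

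A smaller point: in Step~2, the building is not a Euclidean space and carries no global dilation, so you cannot literally rescale the triangle inside $\mathcal I$. The paper proves the saturation of the side-length cone $\mathcal T$ (Proposition~\ref{prSat}) via the Gauss map to semi-stable weighted configurations on $\partial_\infty\mathcal I$, the obvious rescaling of the \emph{weights}, and then a fixed-point argument on the Euclidean cone $C\partial_\infty\mathcal I$ to invert the Gauss map; the resulting triangle is transferred back to $\mathcal I$ by Corollaire~\ref{corSatTrip}. Citing the KLM generalized triangle inequalities as a black box is acceptable, but your parenthetical justification is wrong.
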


La conjecture qu'ils formulent est que le r\'esultat reste vrai si on remplace $k^2$ par $k$ et m\^eme si on le remplace par $1$ ou $2$, selon que les racines de $\Phi$ sont toutes de m\^eme longueur ou non. Pour $G$ de type $A$, $k=1$ et on a bien une saturation du c\^one de Littlewood-Richardson (form\'e des $(\lambda,\mu,\nu)$ tels que $\big (V(\lambda)\otimes V(\mu)\otimes V(\nu)\big )^{G^\vee} \ne \{0\}$). Dans ce cas-l\`a, le r\'esultat a \'et\'e d\'emontr\'e par Knutson et Tao avec le mod\`ele du nid d'abeilles \cite{KT}.

L'importance de ce th\'eor\`eme tient au fait que la non nullit\'e de  $\big (V(\lambda')\otimes V(\mu')\otimes V(\nu')\big )^{G^\vee}$ traduit l'existence d'une sous-repr\'esentation de $V(\lambda')\otimes V(\mu')$ isomorphe au dual $V(\nu'^*)$ de $V(\nu')$ et la multiplicit\'e de $V(\nu'^*)$ est la dimension de cet espace. On a donc ainsi des renseignements sur la d\'ecomposition en facteurs irr\'eductibles du produit tensoriel de ces deux repr\'esentations irr\'eductibles.

\medskip
\noindent{\bf Esquisse de la preuve. } En gros, la preuve se d\'ecanule en les points suivants :

{\bf Etape 1)} $\big (V(N\lambda)\otimes V(N\mu)\otimes V(N\nu)\big )^{G^\vee} \ne \{0\}$ implique qu'il existe un triangle (g\'eod\'esique) $T(0,A,B)$ dans $\mathcal I$, de longueurs de c\^ot\'es $N\lambda, N\mu, N\nu$.

{\bf Etape 2)} Il existe une application, appel\'ee <<application de Gauss>>
, qui \`a $T(0,A,B)$ associe une configuration semi-stable $((N\lambda, \xi_1), (N\mu, \xi_2), (N\nu, \xi_3))$ de points pond\'er\'es de $\partial_\infty \mathcal I$ (le bord visuel de $\mathcal I$).

{\bf Etape 3)} L'ensemble des configurations semi-stables est satur\'e, ainsi $((\lambda, \xi_1), (\mu, \xi_2), (\nu, \xi_3))$ est toujours semi-stable. On veut maintenant inverser l'application de Gauss
. Pour cela on construit \`a partir de $((\lambda, \xi_1), (\mu, \xi_2), (\nu, \xi_3))$ une application $\phi:\mathcal I\to\mathcal I$. En fait, on montre qu'il existe un point fixe not\'e $x_0$ de $\phi$, d'o\`u un triangle $T(x_0,x_1,x_2)$ dans $\mathcal I$, de longueurs de c\^ot\'es $\lambda, \mu, \nu$.

{\bf Etape 4)} La condition $\lambda+\mu+\nu \in Q^\vee$ implique que $x_0,x_1,x_2$ sont des sommets de $\mathcal I$. On r\'etracte dans un appartement contenant $x_0$ et par rapport \`a une alc\^ove ${\mathfrak a}$ qui contient ce point. On dilate par $k$, les sommets deviennent des sommets sp\'eciaux et on obtient un polygone $P(0,a,a_1,...,a_n,b,0)$ form\'e de deux segments $[0,a]$ et $[b,0]$ de longueurs de c\^ot\'es $k\lambda$ et $k\nu$ et d'une ligne bris\'ee qui est en fait un chemin de Hecke par rapport \`a  ${\mathfrak a}$ de type $k\mu$.

{\bf Etape 5)} On replie ce polygone dans la chambre fondamentale, ce qui en donne un form\'e par les segments $[0,k\lambda]$, $[0,k\nu^*]$ et un chemin de Hecke de type $k\mu$ contenu dans la chambre fondamentale (ici, $\nu^* = -w_0 \nu$). Malheureusement, un chemin de Hecke n'est pas forc\'ement LS. Mais, modulo un petit ajustement, en dilatant de nouveau, on arrive \`a un chemin LS. On conclut par un th\'eor\`eme de Littelmann que $\big (V(k^2\lambda)\otimes V(k^2\mu)\otimes V(k^2\nu)\big )^{G^\vee} \ne \{0\}$.

\medskip
Les expos\'es qui suivent s'inspirent largement de \cite{KM}.
La section \ref{seCheLS} introduit les notions de chemins LS et Hecke dues \`a Littelmann et Kapovich-Millson, et explique leur importance en th\'eorie des repr\'esentations. Dans la section \ref{seLSMV}, non essentielle pour le th\'eor\`eme de saturation, on interpr\`ete dans l'immeuble $\mathcal I$, gr\^ace \`a des galeries dites LS, les cycles de Mirkovi\'c-Vilonen qui interviennent \'egalement en th\'eorie des repr\'esentations, cf. \cite{GL}.
La section \ref{seDepli} fournit une preuve originale (enti\`erement immobili\`ere) d'une caract\'erisation par Kapovich et Millson des images dans $\mathbb A$ des triangles g\'eod\'esiques de $\mathcal I$ par certaines r\'etractions; on en d\'eduit l'\'etape 1) ci-dessus. Dans la section \ref{seGauss} on introduit le bord visuel de $\mathcal I$ et la notion de configuration semi-stable; on d\'efinit l'application de Gauss et on montre qu'elle a un point fixe; ce sont les \'etapes 2) et 3). On conclut enfin, dans la derni\`ere section, apr\`es avoir accompli les \'etapes 4) et 5).

La section \ref{seIntro} est l'expos\'e introductif de S. Gaussent, la section \ref{seCheLS} correspond \`a des expos\'es de N. Bardy et G. Rousseau. La section \ref{seLSMV} est un expos\'e, plus ancien, de S. Gaussent. 
La section \ref{seDepli}  regroupe des expos\'es de C. Charignon avec des compl\'ements finaux de S. Gaussent. Enfin les sections \ref{seGauss} et \ref{seSat} correspondent respectivement \`a des expos\'es de S. Gaussent et de G. Rousseau.

\newpage
\section{Diff\'erentes d\'efinitions des chemins LS}
\label{seCheLS}

\subsection{Appartement dans $\mathcal I$}
\label{sseAppart}

On note $\mathbb A$ l'appartement t\'emoin dans $\mathcal I$, il s'agit d'un espace affine euclidien sous $V=Y_{\mathbb R} = Y\otimes \mathbb R$ (souvent identifi\'e \`a V). De plus, on d\'esigne par $C^v = \{x\in V\mid \alpha_i(x)\geqslant 0,\ \forall i\in I\}$ la chambre de Weyl fondamentale; la plupart des notions introduites ci-dessous d\'ependent du choix de cette chambre.
Le groupe de Weyl fini $W^v$ agit isom\'etriquement sur $V$ avec  $C^v$ comme domaine fondamental. Il est engendr\'e par les r\'eflexions $r_\alpha$, pour $\alpha\in\Phi$, o\`u $r_\alpha$est la sym\'etrie orthogonale par rapport \`a l'hyperplan Ker$\alpha$ (mur vectoriel).

Pour tout $v\in V$, il existe un unique $v_0\in C^v\cap W^vv$, on d\'efinit la projection sur $C^v$ en posant $v_0 = pr_{C^v}(v)$. Si $v\in C^v$, on note $v^*=pr_{C^v}(-v)=-w_0.v$ (si $w_0$ est l'\'el\'ement de plus grande longueur de $W^v$). De plus, pour tous $x,y\in\mathbb A $, on pose $d_{C^v}(x,y) = pr_{C^v}(y-x)\in C^v$. On dit que $d_{C^v}(x,y)$ est la longueur du segment orient\'e $[x,y]$.

L'ensemble $\mathcal M$ des murs de $\mathbb A$ est en bijection avec $\Phi\times\mathbb Z$, il s'agit des hyperplans
$$
M(\alpha,k)= \{x\in\mathbb A\mid \alpha(x) + k =0\}.
$$ La r\'eflexion $s_M=s_{\alpha,k}$ associ\'ee au mur $M=M(\alpha,k)$ respecte l'ensemble $\mathcal M$ des murs et ces r\'eflexions engendrent le groupe de Weyl affine : $W^a = W^v\ltimes Q^\vee$. 
 Un mur de $\mathbb A$ d\'etermine deux demi-espaces ferm\'es de $\mathbb A$ (appel\'es demi-appartements) dont il constitue le bord.

Une alc\^ove dans $\mathbb A$ est l'adh\'erence d'une composante connexe du compl\'ementaire des murs; c'est un domaine fondamental pour l'action de $W^a$. Un sommet de $\mathcal A$ est un sommet $x$ d'une alc\^ove, il est sp\'ecial si, $\forall\alpha\in\Phi$, on a $\alpha(x)\in\mathbb Z$ (i.e. si $x\in P^\vee$).

On sera amen\'e plus tard \`a consid\'erer des appartements  construits comme ci-dessus, en rempla\c{c}ant $\mathbb Z$ par un autre sous-groupe discret de $\mathbb R$, par exemple $\{0\}$. Dans ce dernier cas, $W^a=W^v$, l'appartement est dit vectoriel et ses alc\^oves sont aussi ses chambres de Weyl.

\subsection{Chemins polygonaux parcourus \`a vitesse constante}

Si $\lambda\in C^v$, un $\lambda-$chemin ou un chemin de type $\lambda$ est un chemin lin\'eaire par morceaux $\pi : [0,1]\to \mathbb A$ tel que pour tout $t$ (sauf un nombre fini), $pr_{C^v}(\pi'(t)) = \lambda$. Les d\'eriv\'ees \`a droite et \`a gauche en $t$, $\pi'_+(t)$ et $\pi'_-(t)$ existent tout le temps, mais sont parfois non identiques. La somme des longueurs des segments constituant le $\lambda-$chemin $\pi$ est $\lambda$.
Dans Kapovich et Millson, un tel chemin s'appelle un chemin billard (voir paragraphe \ref{sseCheKM}). 

Un $\lambda-$chemin s'\'ecrit $\pi(\lambda,\pi_0,\mathbf w,\mathbf a)$, o\`u $\mathbf w = (w_1,...,w_m)\in (W^v)^m$, $\mathbf a = (a_0 =0 <a_1<\cdots < a_m = 1)$ et
$$
\pi(t) = \pi_0 + \sum_{i=1}^{j-1} (a_i-a_{i-1})w_i(\lambda) + (t-a_{j-1})w_j(\lambda)
$$ si $a_{j-1}\leqslant t\leqslant a_j$.

\subsection{Galeries}

Les chambres de Weyl de $V$ sont les transform\'es de $C^v$ par $W^v$. Une cloison est une facette de codimension $1$ d'une chambre, son type est l'\'el\'ement $i\in I$ tel qu'elle soit conjugu\'ee par $W^v$ \`a $C^v\cap{\mathrm Ker}\alpha_i$. Deux chambres sont dites mitoyennes si elles ont une cloison en commun (elles peuvent Ítre Ègales).

Une galerie de chambres de $C$ \`a $C'$ est une suite $\Gamma=(C_0=C,C_1,...,C_n=C')$ de chambres telle que $C_{i-1}$ et $C_{i}$ soient mitoyennes, pour $1\leq i\leq n$. Cette galerie est dite minimale ou tendue si la longueur $n$ est minimale; alors $n$ est la distance de $C$ \`a $C'$. La suite des types de cloisons dans $C_{i-1}\cap C_{i}$ est un type de cette galerie. Plier $\Gamma$ (au niveau $j$), c'est avoir $C_{j-1}\ne C_j$ et remplacer $C_j,...,C_n$ par leurs images par la r\'eflexion par rapport au mur (vectoriel) contenant la cloison $C_{j-1}\cap C_j$.

Comme une alc\^ove est un domaine fondamental pour $W^a$, toutes les d\'efinitions pr\'ec\'edentes peuvent \^etre r\'ep\'et\'ees pour les alc\^oves. On d\'efinit ainsi dans $\mathbb A$ des cloisons d'alc\^ove et des galeries d'alc\^oves. Dans ce cadre les types correspondent aux cloisons d'une alc\^ove fondamentale; comme $\Phi$ est irr\'eductible, ils sont index\'es par $I\cup\{0\}$.

\subsection{Ordre de Bruhat-Chevalley}

Le groupe $W^v$ est un groupe de Coxeter pour le syst\`eme de g\'en\'erateurs $\{r_i=r_{\alpha_i}\mid i\in I\}$. Tout $w\in W^v$ peut se d\'ecomposer sous la forme $w=r_{i_1}.....r_{i_n}$; la longueur $\ell(w)$ de $w$ est le minimum des $n$ possibles, une d\'ecomposition avec $\ell(w)$ termes est alors dite r\'eduite.

On a le r\'esultat classique suivant.
\begin{prop}
Dans $W^v$, les assertions suivantes sont \'equivalentes :
\begin{enumerate}
\item $w'\leq w$ (ordre de Bruhat-Chevalley) ;
\item il existe une suite $w'=w_0,w_1,...,w_n=w$ et des racines $\beta_i$ tels que $w_{i+1} = w_i r_{\beta_i}$ et $\ell(w_{i+1})>\ell (w_i)$ ;
\item idem avec les $r_{\beta_i}$ \`a gauche ;
\item idem avec $\ell(w_{i+1}) = \ell (w_i) + 1$ ;
\item $w'$ est le produit d'une sous-expression d'une d\'ecomposition r\'eduite de $w$ ;
\item il existe une galerie minimale de $C^v$ \`a $wC^v$ qui donne une galerie de $C^v$ \`a $w'C^v$ par des pliages successifs.
\end{enumerate}
\end{prop}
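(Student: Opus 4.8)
The plan is to fix one of the listed conditions as the definition of the Bruhat--Chevalley order --- I take (1) to be the subword condition (5), which is the most convenient here, any of the usual definitions doing equally well since the cycle below shows them all equivalent --- and then to establish the cycle $(5)\Rightarrow(4)\Rightarrow(2)\Rightarrow(5)$ together with the two ``symmetry'' equivalences $(2)\Leftrightarrow(3)$ and $(5)\Leftrightarrow(6)$. The single nontrivial input from Coxeter theory that I would invoke is the \emph{condition d'\'echange forte}: for any expression $w=r_{i_1}\cdots r_{i_q}$ (not necessarily reduced) and any reflection $t$ with $\ell(wt)<\ell(w)$, there is an index $j$ with $wt=r_{i_1}\cdots\widehat{r_{i_j}}\cdots r_{i_q}$ (and the symmetric statement on the left). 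This fact, and the Lifting Property used below, are the classical ingredients I would assume.

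First, $(2)\Rightarrow(5)$ by iterating the strong exchange condition downwards along the chain. Starting from a reduced expression of $w=w_n$ and using $w_i=w_{i+1}r_{\beta_i}$ with $\ell(w_i)<\ell(w_{i+1})$, each step removes one letter: since $\ell(w_{i+1}r_{\beta_i})<\ell(w_{i+1})$, strong exchange deletes a letter and produces an expression --- possibly non-reduced, which is allowed --- whose product is $w_i$ and which is a subword of the previous one. After $n$ deletions one reaches a subword of the fixed reduced expression of $w$ with product $w_0=w'$, which is exactly (5). Note that (5) asks only for a subword, not a reduced one, so no length bookkeeping beyond ``one letter drops per step'' is needed, and the intermediate expressions may safely be non-reduced.

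The implication $(5)\Rightarrow(4)$ is the chain property and is where the real work lies: I must show that if $w'\le w$ with $w'\ne w$ then $w$ is reached from $w'$ by steps $w_{i+1}=w_ir_{\beta_i}$ with $\ell(w_{i+1})=\ell(w_i)+1$ \emph{exactly}. I would argue by induction on $\ell(w)$ using the \emph{Lifting Property}: for a simple reflection $s$ with $ws<w$ and any $u\le w$, one controls the four relative positions of $u,us$ against $w,ws$, which lets one replace $(u,w)$ by a comparable pair of strictly smaller length while preserving the covering structure, and hence refine any comparison into unit-length steps. This is the main obstacle, and the place where the Exchange Condition is used most delicately. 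The converse $(4)\Rightarrow(2)$ is immediate, a unit-step chain being a special case of a length-increasing one; with $(2)\Rightarrow(5)$ this closes the cycle, so (2), (4), (5) and the chosen definition (1) all agree.

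It remains to add the two equivalences. For $(2)\Leftrightarrow(3)$ I would use the anti-automorphism $g\mapsto g^{-1}$, which preserves $\ell$, fixes each reflection ($r_\beta^{-1}=r_\beta$), and turns a right chain $w_{i+1}=w_ir_{\beta_i}$ into a left chain $w_{i+1}^{-1}=r_{\beta_i}w_i^{-1}$; combined with the inverse-invariance of the order (reverse a reduced word and each of its subwords), this shows that (2) for $(w',w)$, (2) for $(w'^{-1},w^{-1})$ and (3) for $(w',w)$ are all equivalent. Finally $(5)\Leftrightarrow(6)$ is the dictionary between reduced words and minimal galleries: a reduced expression $w=r_{i_1}\cdots r_{i_n}$ is a minimal gallery $C^v,\,r_{i_1}C^v,\dots,wC^v$, and folding it at level $j$ replaces the tail by its reflection across the wall carrying the cloison $C_{j-1}\cap C_j$; writing $u=r_{i_1}\cdots r_{i_{j-1}}$, that reflection is $ur_{i_j}u^{-1}$ and the new endpoint is $ur_{i_j}u^{-1}\cdot w\,C^v=r_{i_1}\cdots\widehat{r_{i_j}}\cdots r_{i_n}\,C^v$, i.e. the $j$-th letter is deleted. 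Thus successive foldings realize exactly the subwords of the reduced expression, and (6) --- that $w'C^v$ is reached this way --- is precisely (5).
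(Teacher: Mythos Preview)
Your outline is correct and follows the standard route through Coxeter theory: the cycle $(5)\Rightarrow(4)\Rightarrow(2)\Rightarrow(5)$ via strong exchange and the Lifting Property, the inversion symmetry for $(2)\Leftrightarrow(3)$, and the reduced-word/minimal-gallery dictionary for $(5)\Leftrightarrow(6)$ are exactly the classical ingredients. The one place you are deliberately sketchy, $(5)\Rightarrow(4)$, is indeed where the work lies, but the inductive scheme you describe (descend along a simple reflection $s$ with $ws<w$ and use Lifting to compare $w',w's$ to $ws$) is the standard proof of the chain property and goes through without difficulty.

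As for comparison with the paper: there is nothing to compare. The paper states this proposition as ``le r\'esultat classique suivant'' and gives no proof at all; it is invoked as background. So your argument is not an alternative to the paper's proof but rather a supply of what the paper omits, and it is a perfectly good one.
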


On d\'efinit un ordre dans $W^v/W^v_\lambda$, o\`u $W^v_\lambda = \{w\in W^v\mid w(\lambda) = \lambda\} = \langle r_{\alpha_i}\mid \alpha_i(\lambda) = 0\rangle$. Etant donn\'ee une classe $\tilde w$, il existe un unique $\tilde w_0\in\tilde w$ de longueur minimale not\'ee $\ell_\lambda(\tilde w)$. On a $\tilde w_0 \leq w$, pour tout $w\in\tilde w$ et m\^eme $w = \tilde w_0 u$, $u\in W^v_\lambda$ avec $\ell (w) = \ell (\tilde w_0) + \ell (u)$. De plus, $\tilde w_0C^v$ est la projection de $C^v$ sur $\tilde w\lambda$, i.e. la chambre contenant $\tilde w\lambda$ la plus proche de $C^v$. On d\'efinit
$$
\tilde w'\leq \tilde w \quad\hbox{ par } \quad\tilde w'_0\leq\tilde w_0 \ .
$$

\begin{prop} les conditions suivantes sont \'equivalentes :
\begin{enumerate}
\item $\tilde w'\leq \tilde w$ ;
\item $\exists w\in\tilde w$, $\exists w'\in\tilde w'$, $w'\leq w$ ;
\item il existe une galerie minimale de $C^v$ \`a $\tilde w\lambda$ qui donne une galerie de $C^v$ \`a $\tilde w'\lambda$ par des pliages successifs.
\end{enumerate}
\end{prop}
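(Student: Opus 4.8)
The plan is to run the cycle of implications $(1)\Rightarrow(3)\Rightarrow(2)\Rightarrow(1)$; the implication $(1)\Rightarrow(2)$ is in any case immediate, since $\tilde w_0\in\tilde w$ and $\tilde w'_0\in\tilde w'$, so the substantial content is really $(2)\Rightarrow(1)$, the two geometric steps being translations of the preceding Proposition (stated for $W^v$) through a fixed dictionary. First I would record that dictionary. By the remark preceding the statement, $\tilde w_0C^v$ is the chamber containing $\tilde w\lambda$ that is closest to $C^v$; since the projection of $C^v$ onto the face carrying $\tilde w\lambda$ is unique and any chamber whose closure contains $\tilde w\lambda$ lies at gallery distance $\geq\ell(\tilde w_0)$ from $C^v$, a minimal galerie from $C^v$ to $\tilde w\lambda$ is exactly a minimal galerie of chambers from $C^v$ to $\tilde w_0C^v$. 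Dually, a chamber contains $\tilde w'\lambda$ in its closure if and only if it is of the form $w'C^v$ with $w'\in\tilde w'$: indeed $pr_{C^v}(\tilde w'\lambda)=\lambda$, so $w'C^v\ni\tilde w'\lambda$ forces $w'^{-1}\tilde w'\lambda=\lambda$, i.e. $w'\in\tilde w'W^v_\lambda=\tilde w'$.

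For $(1)\Rightarrow(3)$: by definition $(1)$ reads $\tilde w'_0\leq\tilde w_0$ in $W^v$, so point $(6)$ of the preceding Proposition gives a minimal galerie from $C^v$ to $\tilde w_0C^v$ which, after successive foldings, becomes a galerie from $C^v$ to $\tilde w'_0C^v$. By the dictionary the first is a minimal galerie from $C^v$ to $\tilde w\lambda$, and $\tilde w'_0C^v$ contains $\tilde w'_0\lambda=\tilde w'\lambda$; this is exactly $(3)$. For $(3)\Rightarrow(2)$: the minimal galerie in $(3)$ necessarily ends at $\tilde w_0C^v$, whereas its folded image ends at some chamber containing $\tilde w'\lambda$, hence at a chamber $w'C^v$ with $w'\in\tilde w'$. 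Applying point $(6)$ of the preceding Proposition with $w=\tilde w_0$ yields $w'\leq\tilde w_0$; since $\tilde w_0\in\tilde w$ and $w'\in\tilde w'$ this is precisely $(2)$.

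For $(2)\Rightarrow(1)$, which is the crux, I would prove the monotonicity of the passage to minimal coset representatives: if $w'\leq w$ then $\tilde w'_0\leq\tilde w_0$. The tool is the subword characterization, point $(5)$ of the preceding Proposition. Write $w=\tilde w_0u$ with $u\in W^v_\lambda$ and $\ell(w)=\ell(\tilde w_0)+\ell(u)$; concatenating a reduced decomposition of $\tilde w_0$ with one of $u$ (a word in the generators $r_{\alpha_i}$ with $\alpha_i(\lambda)=0$) gives a reduced decomposition of $w$ split into a $\tilde w_0$-block followed by a $W^v_\lambda$-block. Since $w'\leq w$, it is the product of a subexpression, so $w'=w'_1w'_2$ where $w'_1$ is read off the $\tilde w_0$-block and $w'_2$ off the $W^v_\lambda$-block; hence $w'_1\leq\tilde w_0$ and $w'_2\in W^v_\lambda$. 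Then $w'W^v_\lambda=w'_1W^v_\lambda=\tilde w'$, so $\tilde w'_0\leq w'_1\leq\tilde w_0$, which is $(1)$.

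The main obstacle is this last lemma: the geometric equivalences reduce formally to the $W^v$-case once the dictionary is in place, but $(2)\Rightarrow(1)$ requires controlling how the Bruhat order interacts with the parabolic $W^v_\lambda$, and the only delicate point is to arrange a reduced word for $w$ compatible with the splitting $w=\tilde w_0u$ so that a subword of it splits coherently into its two blocks.
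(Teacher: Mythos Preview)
The paper gives no proof of this proposition --- it simply writes ``La preuve est laiss\'ee \`a la sagacit\'e du lecteur'' --- so there is nothing to compare against. Your argument is correct and complete: the dictionary identifying minimal galleries from $C^v$ to $\tilde w\lambda$ with minimal galleries from $C^v$ to $\tilde w_0C^v$ is sound (this is exactly the projection property stated just before the proposition), the implications $(1)\Rightarrow(3)\Rightarrow(2)$ follow at once from the preceding proposition on $W^v$, and your handling of $(2)\Rightarrow(1)$ via the split reduced word $w=\tilde w_0u$ and the subword criterion is the standard and cleanest way to show that taking minimal coset representatives is monotone for the Bruhat order.
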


La preuve est laiss\'ee \`a la sagacit\'e du lecteur.

\subsection{Les chemins \`a la Kapovich et Millson}
\label{sseCheKM}


Un chemin $\pi:[0,1]\to\mathbb A$ est un vrai billard si $\pi$ est lin\'eaire par morceaux et si, pour tout $t$, $\pi'_+(t)\in W_{\pi(t)}^v\pi'_-(t)$, o\`u $W_{\pi(t)}^v = \langle r_\alpha\mid \alpha\in\Phi, \ \alpha(\pi(t))\in\mathbb Z\rangle$. Un chemin qui est un vrai billard est un $\lambda-$chemin pour $\lambda = pr_{C^v}(\pi'_\pm(t))$.

Si $\pi$ est un $\lambda-$chemin, on note $w_\pm(t)$ l'\'el\'ement de $W^v$ de plus petite longueur tel que $\pi'_\pm(t) = w_\pm(t)\lambda$. Le chemin lin\'eaire par morceaux $\pi$ est pli\'e positivement si, pour tout $t$, il existe une $W^v-$cha\^{\i}ne de $\pi'_-(t)$ \`a $\pi'_+(t)$, c'est-\`a-dire qu'il existe une suite de vecteurs $\pi'_-(t) = \eta_0,\eta_1,...,\eta_m = \pi'_+(t)$ et des racines positives $\beta_1,...,\beta_m$ telles que
\begin{description}
\item[(H1)] $\quad r_{\beta_i}(\eta_{i-1}) = \eta_i $
\item[(H2)] $\quad \beta_i(\eta_{i-1}) <0\ $.
\end{description}

\begin{lem}
Un $\lambda-$chemin $\pi$ est pli\'e positivement si, et seulement si, pour tout $t$, $w_+(t)\leq w_-(t)$.
\end{lem}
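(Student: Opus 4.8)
The plan is to reduce everything to a single value of $t$ and to a length computation in $W^v$. Since both the positive-folding property and the inequality $w_+(t)\le w_-(t)$ are required at every $t$, I would fix $t$ once and for all and abbreviate $\eta_-=\pi'_-(t)$, $\eta_+=\pi'_+(t)$, $w_\pm=w_\pm(t)$; thus $\eta_\pm=w_\pm\lambda$ and $w_\pm$ are the minimal-length representatives of their classes modulo $W^v_\lambda$. The two facts I would lean on are classical: first, for a positive root $\beta$ and $w\in W^v$ one has $\ell(r_\beta w)<\ell(w)$ if and only if $w^{-1}\beta<0$; second, in its left-handed form, point 3 of the first Proposition shows that $\ell(r_\beta w)<\ell(w)$ forces $r_\beta w<w$ for the Bruhat--Chevalley order. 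The dominance of $\lambda$ will be used repeatedly through the elementary remark that $(w^{-1}\beta)(\lambda)<0$ forces $w^{-1}\beta$ to be a negative root.

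For the implication ``positively folded $\Rightarrow w_+\le w_-$'', take a chain $\eta_-=\eta_0,\dots,\eta_m=\eta_+$ satisfying (H1) and (H2), and let $u_i$ be the minimal representative of the class of $\eta_i$, so that $u_0=w_-$ and $u_m=w_+$. I would examine one step. Writing $\eta_{i-1}=u_{i-1}\lambda$, condition (H2) reads $(u_{i-1}^{-1}\beta_i)(\lambda)=\beta_i(\eta_{i-1})<0$, hence $u_{i-1}^{-1}\beta_i<0$, hence $\ell(r_{\beta_i}u_{i-1})<\ell(u_{i-1})$ and $r_{\beta_i}u_{i-1}<u_{i-1}$. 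Since $r_{\beta_i}u_{i-1}$ is a representative of the class of $\eta_i=r_{\beta_i}(\eta_{i-1})$, point 2 of the second Proposition gives $u_i\le u_{i-1}$. Transitivity over $i=1,\dots,m$ then yields $w_+=u_m\le u_0=w_-$.

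For the converse, assume $w_+\le w_-$. By the left-handed form of point 3 of the first Proposition there is a sequence $w_+=w_0,w_1,\dots,w_n=w_-$ with $w_{i+1}=r_{\beta_i}w_i$, $\ell(w_{i+1})>\ell(w_i)$, the roots $\beta_i$ being taken positive. Setting $\eta^{(i)}=w_i\lambda$ gives $\eta^{(0)}=\eta_+$ and $\eta^{(n)}=\eta_-$, and reading the sequence from $\eta^{(n)}$ down to $\eta^{(0)}$ each step satisfies $\eta^{(i)}=r_{\beta_i}(\eta^{(i+1)})$, which is (H1). Moreover $\ell(r_{\beta_i}w_{i+1})=\ell(w_i)<\ell(w_{i+1})$ gives $w_{i+1}^{-1}\beta_i<0$, so $\beta_i(\eta^{(i+1)})=(w_{i+1}^{-1}\beta_i)(\lambda)\le 0$ by dominance of $\lambda$. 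When this number is strictly negative the step verifies (H2); when it vanishes, $r_{\beta_i}$ fixes $\eta^{(i+1)}$, the step is trivial, and I would simply delete it. What remains is an honest positively folded chain from $\eta_-$ to $\eta_+$.

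The delicate point, and the place where I expect to spend the most care, is exactly this last bookkeeping in the converse: the passage from a Bruhat chain in $W^v$ to a chain of vectors in $W^v\lambda$ is mediated by the stabiliser $W^v_\lambda$, and the reflections lying in $W^v_\lambda$ make the positivity in (H2) degenerate from strict to weak. One must check that these are precisely the steps fixing $\eta^{(i+1)}$, so that deleting them is harmless and does not disturb (H1) for the neighbouring steps. By contrast the direct implication is routine once the single-step length identity is granted.
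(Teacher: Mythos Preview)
Your proof is correct and follows essentially the same route as the paper's: both directions reduce to the length criterion $\ell(r_\beta w)<\ell(w)\iff w^{-1}\beta<0$ together with the dominance of $\lambda$, and both handle the converse by taking a Bruhat chain from $w_-$ to $w_+$ and deleting the steps where $\beta_i(\eta_{i-1})=0$. The only cosmetic difference is that in the forward direction you pass to minimal coset representatives $u_i$ at each step (invoking the second Proposition), whereas the paper tracks the raw products $w_i=r_{\beta_i}\cdots r_{\beta_1}w_-(t)$ and only compares with the minimal representative $w_+(t)$ at the very end.
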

\begin{proof} On a $\eta_0 = \pi'_-(t) = w_-(t)\lambda$ et $\eta_i = w_i\lambda$, o\`u on a pos\'e $w_i = r_{\beta_i}\cdots r_{\beta_1} w_-(t)$. Or $\beta_i(\eta_{i-1}) <0$ implique que $\beta_i(w_{i-1}C^v) \leqslant 0$ ce qui \'equivaut \`a $\ell (w_i = r_{\beta_i}w_{i-1}) < \ell(w_{i-1})$. Donc $w_-(t) = w_0>w_1>\cdots >w_m \geq w_+(t)$ car $\pi'_+(t) = w_m\lambda$.

R\'eciproquement, si $w_+(t) \leq w_-(t)$, d'apr\`es l'une des caract\'erisations de l'ordre de Bruhat, il existe des racines positives $\beta_1,...,\beta_m$ telles que si on pose $w_0 = w_-(t)$, $\eta_0 = \pi'_-(t)$ et, pour $1\leq i\leq m$, $w_i = r_{\beta_i}\cdots r_{\beta_1} w_-(t)$, $\eta_i = w_i\lambda$, on a $w_m = w_+(t) $, $\eta_m = \pi'_+(t)$ et  $\ell(w_i)<\ell(w_{i-1})$
. D'o\`u, {\bf (H1)}.

Pour {\bf (H2)},
$$
\ell(r_{\beta_i}w_{i-1})<\ell(w_{i-1}) \Rightarrow \beta_i(w_{i-1}C^v) \leqslant 0 \Rightarrow \beta_i(\eta_{i-1}) \leqslant 0 \ .
$$ Si c'est une in\'egalit\'e stricte, c'est bon. Sinon, $\eta_{i-1} = \eta_i$ et on oublie $i$ dans la liste.
\end{proof}

\bigskip
Un chemin lin\'eaire par morceaux $\pi$ est de Hecke (par rapport \`a $-C^v$) si, pour tout $t$, il existe une $W^v_{\pi(t)}-$cha\^{\i}ne de $\pi'_-(t)$ \`a $\pi'_+(t)$, c'est \`a dire une  $W^v-$cha\^{\i}ne comme ci-dessus v\'erifiant de plus:
\begin{description}
\item[(H3)] $r_{\beta_i}\in W^v_{\pi(t)}$, i.e. $\beta_i(\pi(t))\in\mathbb Z$: $\pi(t)$ est dans un mur de direction Ker$\beta_i$.
\end{description}
Alors, Hecke implique vrai billard et pli\'e positivement, mais la r\'eciproque est fausse en g\'en\'eral (on peut trouver un contre-exemple en type $G_2$). 
Si les points anguleux de $\pi$ sont des sommets sp\'eciaux, la condition {\bf (H3)} est automatiquement v\'erifi\'ee; alors billard et pli\'e positivement impliquent Hecke.

\subsection{Les chemins LS}

Un chemin de {\it Lakshmibai-Seshadri} (ou {\it LS}) de type $\lambda\in C^v$ est un $\lambda-$chemin $\pi=\pi(\lambda,\pi_0,{\mathbf w},{\mathbf a})$ 
 tel que : pour tout $j=1,\dots,m-1$, il existe une $a_j-$cha\^{\i}ne de $w_j$ \`a
$w_{j+1}$ i.e. il existe une suite $\beta_{j,1},\dots,\beta_{j,s_j}$ de racines positives telle que, si on pose $\sigma_{j,0}=w_{j}\;,\;\sigma_{j,1}=r_{\beta_{j,1}}w_j\;,\dots,\;
\sigma_{j,s_j}=r_{\beta_{j,s_j}}\dots r_{\beta_{j,1}}w_j$, on a $\sigma_{j,s_j}=w_{j+1}$ et

\begin{description}
\item[(LS0)] $\pi_0$ est un sommet sp\'ecial (i.e. $\pi_0\in P^\vee$) et $\lambda\in P^{\vee+}=P^\vee\cap C^v$.
\item[(LS1)] $\sigma_{j,i}<\sigma_{j,i-1}$,\quad dans $W^v/W^v_\lambda$ ;
\item[(LS2)] $a_j\beta_{j,i}(\sigma_{j,i}(\lambda))\in\mathbb Z$ ;
\item[(LS3)] $\ell_\lambda(\sigma_{j,i})=\ell_\lambda(\sigma_{j,i-1})-1$.
\end{description}
\noindent En fait, Littelmann (\cite{L1}, \cite{L2}) consid\`ere des chemins LS normalis\'es, i.e. avec $\pi_0=0$.

\bigskip
\noindent{\bf N.B. } 1) {\bf (LS0)} + {\bf (LS2)} $\Rightarrow a_j\in \mathbb Q$ (si $w_j \ne w_{j-1}$).

2) On sait que $\pi(1)-\pi(0)-\lambda\in -Q^{\vee+}=-\sum\,\mathbb N\alpha^\vee_i$.

\begin{prop}
\label{prLS1et2}
Si {\bf (LS0)} est v\'erifi\'e alors $\quad $ {\bf (LS1)} $+$ {\bf (LS2)} $\iff$ Hecke.
\end{prop}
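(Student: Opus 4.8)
\emph{Plan of proof.} The statement decouples into two essentially independent pieces once one reduces everything to the finitely many corners $t=a_j$ of $\pi$: away from the corners $\pi'_+(t)=\pi'_-(t)$ and every condition is vacuous. At a corner $a_j$ both ``Hecke'' and ``(LS1)+(LS2)'' are phrased through the same combinatorial object, a $W^v$-chain $\sigma_{j,0}=w_j,\dots,\sigma_{j,s_j}=w_{j+1}$ joining $\pi'_-(a_j)=w_j(\lambda)$ to $\pi'_+(a_j)=w_{j+1}(\lambda)$ with $\sigma_{j,i}=r_{\beta_{j,i}}\sigma_{j,i-1}$ and $\beta_{j,i}>0$. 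The plan is therefore to prove separately that (LS1) matches the folding conditions \textbf{(H1)}+\textbf{(H2)} and that, under (LS0), (LS2) matches the wall condition \textbf{(H3)}.

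First I would dispose of the folding part, where (LS0) plays no role. Conditions \textbf{(H1)} and \textbf{(H2)} say exactly that the vectors $\sigma_{j,i}(\lambda)$ form a $W^v$-chain with $\beta_{j,i}(\sigma_{j,i-1}(\lambda))<0$. The sign criterion already used in the proof of the previous lemma, namely $\beta_{j,i}(\sigma_{j,i-1}(\lambda))<0\Rightarrow\beta_{j,i}(\sigma_{j,i-1}C^v)\leqslant0$, together with the characterisation of the order of $W^v/W^v_\lambda$ recalled above, identifies the strict sign condition \textbf{(H2)} with the relation $\sigma_{j,i}<\sigma_{j,i-1}$ in $W^v/W^v_\lambda$, which is (LS1); strictness of \textbf{(H2)} guarantees that the orbit point genuinely moves, hence that the step is strict in the quotient, and conversely. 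Thus \textbf{(H1)}+\textbf{(H2)} $\iff$ (LS1).

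The core is the equivalence \textbf{(H3)} $\iff$ (LS2) at each corner, which I would obtain by a congruence computation. Two elementary identities drive it: from $\sigma_{j,i}(\lambda)=r_{\beta_{j,i}}\sigma_{j,i-1}(\lambda)$ one gets $\beta_{j,i}(\sigma_{j,i}(\lambda))=-\beta_{j,i}(\sigma_{j,i-1}(\lambda))$ and $\sigma_{j,i}(\lambda)-\sigma_{j,i-1}(\lambda)=-\beta_{j,i}(\sigma_{j,i-1}(\lambda))\,\beta_{j,i}^\vee$. Summing the second along the chain at $a_k$ gives $w_k(\lambda)-w_{k+1}(\lambda)=\sum_i\beta_{k,i}(\sigma_{k,i-1}(\lambda))\beta_{k,i}^\vee$, so by (LS2) at $a_k$ the vector $a_k\bigl(w_k(\lambda)-w_{k+1}(\lambda)\bigr)$ lies in $Q^\vee$. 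An Abel summation rewrites the corner position as
\[ \pi(a_j)=\pi_0+a_j\,w_j(\lambda)+\sum_{k=1}^{j-1}a_k\bigl(w_k(\lambda)-w_{k+1}(\lambda)\bigr). \]
Applying $\beta_{j,i}$ and using that $\beta_{j,i}(\pi_0)\in\mathbb Z$ (this is where (LS0) enters, since $\pi_0\in P^\vee$ and $\beta_{j,i}\in Q$) and $\beta_{j,i}(Q^\vee)\subset\mathbb Z$, one finds $\beta_{j,i}(\pi(a_j))\equiv a_j\,\beta_{j,i}(w_j(\lambda))\pmod{\mathbb Z}$. Expanding $w_j(\lambda)=\sigma_{j,0}(\lambda)$ back to $\sigma_{j,i-1}(\lambda)$ through the same telescoping identity, and using integrality of the Cartan numbers $\beta_{j,i}(\beta_{j,l}^\vee)$ together with (LS2) for the indices $l<i$, reduces $a_j\beta_{j,i}(w_j(\lambda))$ modulo $\mathbb Z$ to $a_j\beta_{j,i}(\sigma_{j,i-1}(\lambda))=-a_j\beta_{j,i}(\sigma_{j,i}(\lambda))$. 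Hence $\beta_{j,i}(\pi(a_j))\in\mathbb Z\iff a_j\beta_{j,i}(\sigma_{j,i}(\lambda))\in\mathbb Z$, that is, \textbf{(H3)} $\iff$ (LS2).

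The main obstacle is exactly what the last paragraph conceals: \textbf{(H3)} at $a_j$ refers to the \emph{global} position $\pi(a_j)$, which a priori remembers the whole past of the path, whereas (LS2) is purely local. The localising mechanism is the membership $a_k(w_k(\lambda)-w_{k+1}(\lambda))\in Q^\vee$, which itself rests on (LS2) at the earlier corners. I would therefore run the equivalence as a double induction: an outer induction on the corner index $j$, so that the contributions of the corners $k<j$ are already known to lie in $Q^\vee$, and, inside a fixed corner, an inner induction on $i$, so that the correction terms carrying $\beta_{j,l}(\sigma_{j,l-1}(\lambda))$ for $l<i$ are already controlled. In the direction (LS1)+(LS2) $\Rightarrow$ Hecke all instances of (LS2) are given, so the induction is merely a consistency check; in the converse direction one reads (LS2) off from \textbf{(H3)} step by step, and it is there that the nesting of the two inductions is genuinely needed.
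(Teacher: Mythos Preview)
The paper gives no proof of this proposition; it simply refers the reader to \cite{GR}. Your argument is correct and self-contained. The reduction to the corners $t=a_j$, the identification of \textbf{(H1)}+\textbf{(H2)} with (LS1) via the quotient Bruhat order (which is the preceding lemma read in $W^v/W^v_\lambda$, using the characterisation $\tilde w'\le\tilde w\Leftrightarrow\exists w'\in\tilde w',w\in\tilde w,\ w'\le w$), and the Abel-summation congruence
\[
\beta_{j,i}(\pi(a_j))\equiv a_j\,\beta_{j,i}(\sigma_{j,i-1}(\lambda))=-a_j\,\beta_{j,i}(\sigma_{j,i}(\lambda))\pmod{\mathbb Z}
\]
showing \textbf{(H3)}$\iff$(LS2) all go through as you describe. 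The double induction on $(j,i)$ is indeed the right device in the direction Hecke $\Rightarrow$ (LS1)+(LS2): the outer hypothesis supplies $a_k(w_k(\lambda)-w_{k+1}(\lambda))\in Q^\vee$ for $k<j$, the inner one supplies $a_j\beta_{j,l}(\sigma_{j,l-1}(\lambda))\in\mathbb Z$ for $l<i$, and together with $\beta_{j,i}(\pi_0)\in\mathbb Z$ (the sole use of (LS0)) they kill every term of the congruence except the one you want.
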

\noindent La preuve de cette proposition se trouve dans \cite{GR}.


\subsection{Les op\'erateurs $e_\alpha$ et $f_\alpha$}
\label{sseOpera}

Soient $\pi:[0,1]\to\mathbb A$ un chemin lin\'eaire par morceaux d'origine $\pi(0)=0$ et $\alpha$ une racine simple.

On consid\`ere $Q={\mathrm Inf}(\alpha\circ\pi([0,1])\cap\mathbb Z)$. Si $Q=0$, le chemin $e_\alpha\pi$ n'est pas d\'efini. Si $Q<0$, soient $q={\mathrm Inf}\{t\in[0,1]\mid \alpha\circ\pi(t)=Q\}$ et $y={\mathrm Sup}\{t\in[0,q]\mid\alpha\circ\pi(t)=Q+1\}$; le chemin $e_\alpha\pi$ est la concat\'enation de $\pi\vert_{[0,y]}$, d'un sym\'etrique de $\pi\vert_{[y,q]}$ et d'un translat\'e de $\pi\vert_{[q,1]}$. Plus pr\'ecis\'ement $e_\alpha\pi(t)=\pi(t)$ pour $t\in[0,y]$, $e_\alpha\pi(t)=s_{\alpha,Q+1}(\pi(t))$ pour $t\in[y,q]$ et $e_\alpha\pi(t)=\pi(t)-\pi(q)+e_\alpha\pi(q)$ pour $t\in[q,1]$.

De m\^eme on consid\`ere la partie enti\`ere $P$ de $\alpha\circ\pi(1)-Q$. Si $P\leq 0$, le chemin $f_\alpha\pi$ n'est pas d\'efini. Si $P\geq1$, soient $p={\mathrm Sup}\{t\in[0,1]\mid \alpha\circ\pi(t)=Q\}$ et $x={\mathrm Inf}\{t\in[p,1]\mid\alpha\circ\pi(t)=Q+1\}$; le chemin $f_\alpha\pi$ est la concat\'enation de $\pi\vert_{[0,p]}$, d'un sym\'etrique de $\pi\vert_{[p,x]}$ et d'un translat\'e de $\pi\vert_{[x,1]}$. Plus pr\'ecis\'ement $f_\alpha\pi(t)=\pi(t)$ pour $t\in[0,p]$, $f_\alpha\pi(t)=s_{\alpha,Q}(\pi(t))$ pour $t\in[p,x]$ et $f_\alpha\pi(t)=\pi(t)-\pi(x)+f_\alpha\pi(x)$ pour $t\in[x,1]$.

\bigskip
On a les propri\'et\'es assez faciles suivantes:

\begin{itemize}
\item Si $e_\alpha\pi$ est d\'efini, on a $e_\alpha\pi(1)=\pi(1)+\alpha^\vee$.
\item Si $f_\alpha\pi$ est d\'efini, on a $f_\alpha\pi(1)=\pi(1)-\alpha^\vee$.
\item $(e_\alpha)^n\pi$ est d\'efini si et seulement si  $n\leq -Q$.
\item $(f_\alpha)^n\pi$ est d\'efini si et seulement si  $n\leq P$.
\item Si $\pi(1)\in P^\vee$, on a $P+Q=\alpha(\pi(1))$.
\item $e_\alpha\pi=\pi' \Longleftrightarrow f_\alpha\pi'=\pi$
\item Si $\pi([0,1])\subset C^v$, aucun $e_\alpha\pi$ n'est d\'efini.
\end{itemize}

On notera l'analogie des propri\'et\'es 3, 4 et 5 avec celles des bases des repr\'esentations de $SL_2$.

Pour $\lambda\in P^{\vee+}$, soit $\pi_\lambda$ le segment $[0,\lambda]$, i.e. $\pi_\lambda(t)=t\lambda$. On montre alors le r\'esultat, plus difficile, suivant \cite{L1}:

\begin{prop}
\label{prLSop}
Un chemin $\pi$ d'origine $0$ est un chemin LS (normalis\'e) de type $\lambda$ si et seulement si on peut l'\'ecrire sous la forme $\pi=f_{\beta_1}f_{\beta_2}...f_{\beta_r}\pi_\lambda$ avec $r\in\mathbb N$ et $\beta_1,...,\beta_r$ des racines simples.
\end{prop}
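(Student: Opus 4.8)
The plan is to identify the set $\mathcal B(\lambda)$ of normalized LS paths of type $\lambda$ with the orbit of $\pi_\lambda$ under the lowering operators $f_\alpha$ ($\alpha$ simple), inside the space of all piecewise-linear paths issued from $0$. Everything rests on one technical lemma, which I will call \emph{stability}: if $\pi\in\mathcal B(\lambda)$ and $\alpha$ is a simple root, then $e_\alpha\pi$ and $f_\alpha\pi$, whenever they are defined, again belong to $\mathcal B(\lambda)$. Granting stability, the implication $\Leftarrow$ is immediate. Indeed $\pi_\lambda=\pi(\lambda,0,(\mathrm{id}),(0,1))$ is an LS path: only \textbf{(LS0)} must be checked, and it holds because $\lambda\in P^{\vee+}$; there is no intermediate breakpoint, so the chain conditions are vacuous. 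Hence by induction on $r$ every path $f_{\beta_1}\cdots f_{\beta_r}\pi_\lambda$ lies in $\mathcal B(\lambda)$.

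For the implication $\Rightarrow$ I would argue by descending induction on the height $\mathrm{ht}(\lambda-\pi(1))$, where $\lambda-\pi(1)=\sum_i c_i\alpha_i^\vee\in Q^{\vee+}$ by N.B.~2 and $\mathrm{ht}=\sum_i c_i\in\mathbb N$. Two facts feed the induction. First, $\pi_\lambda$ is the unique LS path contained in $C^v$: from \textbf{(LS1)} and \textbf{(LS3)} the minimal representatives satisfy $\bar w_1>\bar w_2>\cdots>\bar w_m$ in $W^v/W^v_\lambda$, and if $\pi([0,1])\subset C^v$ then $w_1\lambda=\lambda$, i.e.\ $\bar w_1=\mathrm{id}$, which forces $m=1$ and $\pi=\pi_\lambda$. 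So, if $\pi\neq\pi_\lambda$, some $\alpha_i\circ\pi$ takes a negative value. Second, an integrality property: for each simple $\alpha$, $\min_t\alpha(\pi(t))\in\mathbb Z$. This I would deduce from the fact that LS paths are Hecke paths (Proposition~\ref{prLS1et2}) together with the rationality of the breakpoints coming from \textbf{(LS0)}+\textbf{(LS2)}: the local minima of $\alpha\circ\pi$ occur at breakpoints and are integral. Combining the two facts, $\pi\neq\pi_\lambda$ yields a simple $\alpha$ with $\min_t\alpha\circ\pi\leq-1$, hence $Q<0$ and $e_\alpha\pi$ is defined; by stability it is again LS, and since $e_\alpha\pi(1)=\pi(1)+\alpha^\vee$ its height drops by exactly one. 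Iterating, we reach in finitely many steps a path of height $0$, i.e.\ with endpoint $\lambda$; but $\pi(1)=\sum_j(a_j-a_{j-1})w_j\lambda$ is a convex combination of the $w_j\lambda\preceq\lambda$, equal to the extreme point $\lambda$ only if all $w_j\lambda=\lambda$, whence this terminal path is $\pi_\lambda$. Reading the chain of operators backwards through $e_\alpha\pi=\pi'\iff f_\alpha\pi'=\pi$ expresses $\pi=f_{\beta_1}\cdots f_{\beta_r}\pi_\lambda$.

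The heart of the matter, and what I expect to be the main obstacle, is the stability lemma. For $f_\alpha$ the path is cut at $p=\mathrm{Sup}\{t\mid\alpha\circ\pi(t)=Q\}$ and $x=\mathrm{Inf}\{t\in[p,1]\mid\alpha\circ\pi(t)=Q+1\}$; the segment on $[p,x]$ is reflected by $s_{\alpha,Q}$, of linear part $r_\alpha$, and the tail on $[x,1]$ is translated. The breakpoints interior to $[p,x]$ have all their LS data conjugated by $r_\alpha$, and since the chain axioms \textbf{(LS1)}–\textbf{(LS3)} are equivariant under this conjugation they survive; the tail's breakpoints are merely translated. The genuine work is at the two new junctions $p$ and $x$, where the incoming and outgoing directions differ by $r_\alpha$. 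Here the decisive inputs are that $\pi(p)$ and $\pi(x)$ sit on the integral $\alpha$-walls $\{\alpha=Q\}$ and $\{\alpha=Q+1\}$: the single reflection $r_\alpha$ provides a one-step chain, and one verifies that it is a genuine LS junction, namely that it strictly decreases length in $W^v/W^v_\lambda$ (\textbf{(LS3)}, and hence \textbf{(LS1)}) and satisfies the integrality \textbf{(LS2)} thanks to $\alpha(\pi(p)),\alpha(\pi(x))\in\mathbb Z$ and the rationality of the $a_j$. The symmetric analysis handles $e_\alpha$.

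This junction bookkeeping is exactly Littelmann's computation, and checking the compatibility of the chain conditions at $p$ and $x$ with the sign constraints imposed by the local behaviour of $\alpha\circ\pi$ (a local minimum for $f_\alpha$) is where the argument is most delicate; it is the only step I would not expect to dispatch by a short formal manipulation.
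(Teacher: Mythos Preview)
The paper does not give its own proof of this proposition: it merely states it as ``le r\'esultat, plus difficile, suivant \cite{L1}'' and refers to Littelmann's original article. So there is nothing in the paper to compare against; your outline is in fact a faithful sketch of Littelmann's argument in \cite{L1}, built on the stability of $\mathcal B(\lambda)$ under $e_\alpha,f_\alpha$ and the characterisation of $\pi_\lambda$ as the unique LS path staying in $C^v$.

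One point where your sketch would need tightening if you wrote it out in full: the integrality of $\min_t\alpha(\pi(t))$ does not follow as directly from the Hecke property as you suggest. Condition {\bf (H3)} only tells you that the roots $\beta_i$ appearing in the chain at a break $t$ satisfy $\beta_i(\pi(t))\in\mathbb Z$; it says nothing a priori about the fixed simple root $\alpha$ you are tracking. The actual argument (Littelmann, \cite[Lemma~4.5]{L1} and surrounding discussion) uses the finer LS data: at a local minimum of $\alpha\circ\pi$ one has $\alpha(w_j\lambda)\le 0\le\alpha(w_{j+1}\lambda)$, and the $a_j$-chain from $w_j$ to $w_{j+1}$ together with {\bf (LS2)} forces $\alpha(\pi(a_j))=a_j\,\alpha(w_j\lambda)\in\mathbb Z$. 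This is exactly the ``delicate'' bookkeeping you flag at the end, so you have correctly located the only real obstacle; the rest of your outline is sound.
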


\subsection{Application aux repr\'esentations de $G^\vee$}
\label{sseRepr}

Soit $\lambda\in Y^+=Y\cap C^v$. La repr\'esentation irr\'eductible de plus haut poids $\lambda$ est de dimension finie. L'action du tore maximal $T^\vee$ de $G^\vee$ est diagonalisable avec des poids $\mu\in\lambda-\sum_{i\in I}\,\mathbb N\alpha_i^\vee$ (contenu dans le r\'eseau  $Y$ des caract\`eres de $T^\vee$). Il est important de conna\^{\i}tre les poids qui interviennent et leur multiplicit\'e, c'est ce que donne le th\'eor\`eme suivant \cite{L1}.

\begin{thm}[Formule des caract\`eres de Littelmann]
\label{thCaract}
La multiplicit\'e de $\mu$ dans $V(\lambda)$ est le nombre de chemins LS (normalis\'es) de type $\lambda$ et d'extr\'emit\'e $\mu$.
\end{thm}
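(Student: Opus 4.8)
The plan is to deduce the multiplicity formula from the crystal structure that the operators $e_\alpha,f_\alpha$ of \S\ref{sseOpera} put on the set of LS paths. Write $\mathbb{B}(\lambda)$ for the set of normalized LS paths of type $\lambda$ and form the generating function $\mathrm{ch}\,\mathbb{B}(\lambda)=\sum_{\pi\in\mathbb{B}(\lambda)}e^{\pi(1)}\in\mathbb{Z}[Y]$. Since the coefficient of $e^\mu$ is by definition the number of LS paths of type $\lambda$ ending at $\mu$, the theorem is equivalent to the identity $\mathrm{ch}\,\mathbb{B}(\lambda)=\mathrm{ch}\,V(\lambda)$. By Proposition \ref{prLSop}, $\mathbb{B}(\lambda)$ is exactly the set of paths $f_{\beta_1}\cdots f_{\beta_r}\pi_\lambda$ obtained from the dominant segment $\pi_\lambda$; in particular it is stable under the lowering operators $f_\alpha$ by construction, and all its endpoints lie in $P^\vee$.

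First I would prove that the formal character of any family of paths with endpoints in $P^\vee$ that is stable under all the $e_\alpha$ and $f_\alpha$ is invariant under $W^v$. Fixing a simple root $\alpha$, I group the paths of the family into $\alpha$-strings $\{e_\alpha^k\pi\}_k\cup\{f_\alpha^k\pi\}_k$. The elementary properties listed after \S\ref{sseOpera} — namely $e_\alpha\pi(1)=\pi(1)+\alpha^\vee$ and $f_\alpha\pi(1)=\pi(1)-\alpha^\vee$, the facts that $(e_\alpha)^n$ (resp. $(f_\alpha)^n$) is defined iff $n\le-Q$ (resp. $n\le P$), together with $P+Q=\alpha(\pi(1))$ — show that the endpoints along a maximal string form an arithmetic progression of step $\alpha^\vee$ whose two extremities are exchanged by the reflection $s_\alpha$. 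Hence the contribution $\sum e^{\pi(1)}$ of each string is $s_\alpha$-invariant, so the total character is $s_\alpha$-invariant; letting $\alpha$ run over the simple roots yields $W^v$-invariance. This is the exact analogue of the decomposition of an $SL_2$-module into $\mathfrak{sl}_2$-strings, which is why the text stresses the analogy of properties 3, 4, 5 with $SL_2$.

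Next I would establish that $\mathbb{B}(\lambda)$ is also stable under every $e_\alpha$. Using the equivalence $e_\alpha\pi=\pi'\Leftrightarrow f_\alpha\pi'=\pi$, applying $e_\alpha$ to $f_{\beta_1}\cdots f_{\beta_r}\pi_\lambda$ either reaches the top of the relevant $\alpha$-string (and is undefined) or undoes a lowering step and lands back in $\mathbb{B}(\lambda)$; turning this informal dichotomy into a clean proof is the part I expect to be the main obstacle, since it is exactly the well-definedness of Littelmann's path crystal and requires controlling how $e_\alpha$ interacts with an arbitrary word in the $f_{\beta_i}$. Granting $e$-stability, $\mathrm{ch}\,\mathbb{B}(\lambda)$ is $W^v$-invariant; moreover by the N.B. following the definition of LS paths every endpoint satisfies $\pi(1)\in\lambda-\sum\mathbb{N}\alpha_i^\vee$, and the segment $\pi_\lambda\subset C^v$ is, by the last property of \S\ref{sseOpera}, the unique element of $\mathbb{B}(\lambda)$ annihilated by all the $e_\alpha$.

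Finally I would identify the character. A $W^v$-invariant element of $\mathbb{Z}[Y]$ supported in $\lambda-\sum\mathbb{N}\alpha_i^\vee$ with coefficient $1$ on $e^\lambda$ is a non-negative integral combination $\mathrm{ch}\,V(\lambda)+\sum_{\nu<\lambda}m_\nu\,\mathrm{ch}\,V(\nu)$, each summand $V(\nu)$ corresponding to a source for the $e_\alpha$. Since $e$-stability together with the fact that $\pi_\lambda$ generates $\mathbb{B}(\lambda)$ under the $f_\alpha$ shows that every path climbs back to $\pi_\lambda$ under suitable $e_\alpha$, there is a single source, so all $m_\nu=0$ and $\mathrm{ch}\,\mathbb{B}(\lambda)=\mathrm{ch}\,V(\lambda)$. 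Reading off the coefficient of $e^\mu$ gives the multiplicity formula. Alternatively, once $W^v$-invariance and the support bound are in hand, one may multiply by the Weyl denominator and match the resulting alternating sum with Weyl's character formula, but the connectedness argument is cleaner and more in the spirit of the path model.
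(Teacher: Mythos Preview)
The paper does not give its own proof of this theorem; it simply states it and cites Littelmann's original article \cite{L1}. So there is no in-paper argument to compare your proposal against, and I assess your sketch on its own merits.

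Your outline follows Littelmann's strategy and is sound through the $W^v$-invariance step: the $\alpha$-string argument is correct, and you rightly flag $e_\alpha$-stability of $\mathbb{B}(\lambda)$ as the technical core (Littelmann in fact proves it directly from the intrinsic $a$-chain definition of LS paths rather than by manipulating words in the $f_{\beta_i}$, which dissolves the obstacle you anticipate).

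The genuine gap is in your final identification. Knowing that $\mathrm{ch}\,\mathbb{B}(\lambda)$ is $W^v$-invariant, supported in $\lambda-\sum\mathbb{N}\alpha_i^\vee$, and that $\mathbb{B}(\lambda)$ has $\pi_\lambda$ as its unique source does \emph{not} force $\mathrm{ch}\,\mathbb{B}(\lambda)=\mathrm{ch}\,V(\lambda)$. Your assertion that ``each summand $V(\nu)$ corresponds to a source for the $e_\alpha$'' already presupposes that a connected object satisfying only the listed string properties of \S\ref{sseOpera} has the character of an irreducible --- which is the theorem for $\nu$. That implication is false for abstract structures with those properties alone; it requires either Kashiwara's uniqueness of normal crystals, Littelmann's isomorphism theorem, or a direct verification of the Weyl (or Demazure) character formula. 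The approach you relegate to an ``alternatively'' is therefore not a stylistic variant but the substance of the proof in \cite{L1}: one multiplies by the Weyl denominator and exhibits a sign-reversing involution on the resulting signed paths, leaving exactly the terms $\sum_{w}(-1)^{\ell(w)}e^{w(\lambda+\rho)-\rho}$. Without that (or an equivalent input), the connectedness argument is circular.
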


La formule des caract\`eres de Weyl, pr\'ec\'edemment connue, a le d\'esavantage d'exprimer la multiplicit\'e comme une somme d'entiers relatifs, ce qui rend plus difficile de voir si elle est non nulle.

Le th\'eor\`eme suivant de Littelmann \cite{L1}  est fondamental pour la preuve du th\'eor\`eme de saturation.

\begin{thm}[R\`egle de d\'ecomposition \`a la Littlewood-Richardson]
\label{thDecomp}
Soient $\lambda,\mu$ et $\nu$ des copoids dominants de $G$ (i.e. des \'el\'ements de $Y^+\subset P^{\vee+}$). Alors, $\big (V(\lambda)\otimes V(\mu)\otimes V(\nu)\big )^{G^\vee} \ne \{0\}$ si, et seulement si, il existe un chemin LS normalis\'e $\pi$ de type $\mu$ tel que $\lambda +\pi(1) = \nu^*$ et, pour tout $t\in [0,1]$, $\lambda + \pi(t) \in C^v$.
\end{thm}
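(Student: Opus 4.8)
The plan is to obtain the equivalence from Littelmann's tensor product rule, which I would in turn read off from the path model via concatenation. First I reduce the invariant space to a tensor multiplicity. Since $G^\vee$ is semisimple its finite-dimensional representations are completely reducible, and $V(\nu)^*\cong V(\nu^*)$ because the lowest weight $-\nu$ of $V(\nu)^*$ has dominant representative $-w_0\nu=\nu^*$. Hence $(V(\lambda)\otimes V(\mu)\otimes V(\nu))^{G^\vee}\cong\operatorname{Hom}_{G^\vee}\!\big(V(\nu^*),V(\lambda)\otimes V(\mu)\big)$, so the left-hand space is nonzero if and only if $V(\nu^*)$ occurs as a constituent of $V(\lambda)\otimes V(\mu)$, i.e. its multiplicity there is positive.

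Next I would prove the decomposition $V(\lambda)\otimes V(\mu)\cong\bigoplus_\sigma V(\lambda+\sigma(1))$, the sum taken over the LS paths $\sigma$ of type $\mu$ with $\lambda+\sigma(t)\in C^v$ for every $t$. Write $\mathbb{B}_\eta$ for the set of normalized LS paths of type $\eta$; by Proposition \ref{prLSop} this is the orbit $\{f_{\beta_1}\cdots f_{\beta_r}\pi_\eta\}$ of the straight segment $\pi_\eta$ under the lowering operators, and by Theorem \ref{thCaract} its formal character $\sum_{\sigma\in\mathbb{B}_\eta}e^{\sigma(1)}$ equals $\operatorname{ch}V(\eta)$. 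I then consider the concatenated set $\mathbb{B}_\lambda\ast\mathbb{B}_\mu=\{\pi\ast\sigma\mid\pi\in\mathbb{B}_\lambda,\ \sigma\in\mathbb{B}_\mu\}$. The crucial input from \cite{L1} is the concatenation theorem: $\mathbb{B}_\lambda\ast\mathbb{B}_\mu$ is stable under all $e_\alpha,f_\alpha$ and is the disjoint union of its connected components, each of which is isomorphic, as a set equipped with root operators, to a single path crystal $\mathbb{B}_\zeta$, where $\zeta$ is the endpoint of the unique highest-weight path of that component.

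It remains to locate the highest-weight paths, those annihilated by every $e_\alpha$. By the last bullet of \S\ref{sseOpera} a path whose image lies in $C^v$ admits no $e_\alpha$, and for these (integral) LS concatenations the converse holds as well, so a path is highest weight exactly when its image lies in $C^v$. For a concatenation $\pi\ast\sigma$ the values on the first half are those of $\pi$, so being highest weight forces $\pi$ itself to stay in $C^v$; as $\pi_\lambda$ is the only element of $\mathbb{B}_\lambda$ with this property, necessarily $\pi=\pi_\lambda$. Since $[0,\lambda]\subset C^v$, the remaining condition is $\lambda+\sigma(t)\in C^v$ for all $t$, and then the endpoint is $(\pi_\lambda\ast\sigma)(1)=\lambda+\sigma(1)$. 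Taking characters of the component decomposition yields $\operatorname{ch}V(\lambda)\cdot\operatorname{ch}V(\mu)=\sum_\sigma\operatorname{ch}V(\lambda+\sigma(1))$ over exactly these $\sigma$, which is the desired tensor rule. Combined with the first paragraph, the multiplicity of $V(\nu^*)$ in $V(\lambda)\otimes V(\mu)$ is the number of LS paths $\sigma$ of type $\mu$ with $\lambda+\sigma(t)\in C^v$ for all $t$ and $\lambda+\sigma(1)=\nu^*$; this is positive precisely when such a $\sigma$ exists, which is the stated equivalence.

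The main obstacle is the concatenation theorem invoked above. Establishing that $\mathbb{B}_\lambda\ast\mathbb{B}_\mu$ is root-operator stable and splits into copies of the path crystals $\mathbb{B}_\zeta$ requires Littelmann's signature rule, describing whether $e_\alpha$ and $f_\alpha$ act on the left or the right factor of a concatenation, together with his independence theorem, that the set generated from any path with dominant endpoint $\zeta$ under the root operators carries the character $\operatorname{ch}V(\zeta)$ regardless of the chosen initial path. A subsidiary technical point is the converse ``no $e_\alpha\Rightarrow$ image in $C^v$'', where the integrality of LS paths is used to upgrade the vanishing of the integer $Q$ to genuine dominance; all of these are precisely the results supplied by \cite{L1}.
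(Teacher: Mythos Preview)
The paper does not supply its own proof of this theorem: it is simply stated and attributed to Littelmann \cite{L1}. Your outline is a faithful sketch of Littelmann's original argument via concatenation of path crystals, and the reductions you make (invariants $\leftrightarrow$ multiplicity of $V(\nu^*)$; highest-weight concatenated paths $\leftrightarrow$ paths $\pi_\lambda\ast\sigma$ with $\lambda+\sigma$ contained in $C^v$) are correct. The only point to flag is that the integrality property you invoke at the end---that for an LS path (or a concatenation of LS paths) the actual minimum of $\alpha\circ\pi$ is an integer, so $Q=0$ forces $\alpha\circ\pi\geq 0$---is itself a nontrivial feature of LS paths proved in \cite{L1,L2}; you acknowledge this, but it is worth stressing that without it the implication ``no $e_\alpha$ applies $\Rightarrow$ image in $C^v$'' can fail for arbitrary piecewise-linear paths. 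In short, your proposal is correct and is precisely the approach of the reference the paper cites.
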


{\bf N.B. } Dans ce cas on a donc $\lambda+\mu+\nu\in Q^\vee$.

\bigskip\bigskip\bigskip
\section{Chemins LS et Cycles de Mirkovi\'c-Vilonen}
\label{seLSMV}


\subsection{Grassmannienne affine et cycles de Mirkovi\'c-Vilonen}
\label{sseGrass}

Le corps $\mathscr K=\mathbb C(\!(t)\!)$ est complet pour la valuation (discr\`ete) des s\'eries de Laurent formelles, son anneau d'entiers est $\mathcal O=\mathbb C[[t]]$ et son corps r\'esiduel $\mathbb C$.

On consid\`ere le groupe alg\'ebrique semi-simple complexe $G$, son tore maximal $T$, le normalisateur de celui-ci $N$ et son sous-groupe de Borel $B$ (associ\'e \`a $T$ et aux racines simples $\alpha_i$). On note $B^-$ le sous-groupe de Borel oppos\'e et $U$ (resp. $U^-$) le radical unipotent de $B$ (resp. $B^-$); on a $B=T\ltimes U$, $B^-=T\ltimes U^-$ et $U\cap U^-=\{1\}$. On peut consid\'erer les points de $G,T,B,B^-,...$ dans toute $\mathbb C-$alg\`ebre. En particulier $G(\mathscr K)$ et $G(\mathcal O)$ sont des ind-sch\'emas en groupes sur $\mathbb C$.

La grassmannienne affine est $\mathcal G=G(\mathscr K)/G(\mathcal O)$, c'est une ind-vari\'et\'e sur $\mathbb C$.

On suppose en fait, dans cette section, $G$ simplement connexe, donc $Y=Q^\vee$ et $T=Y\otimes_{\mathbb Z}\mathbb C^*$. Pour $\lambda\in Y$, on note $t^\lambda=t\otimes\lambda\in Y\otimes\mathscr K=T(\mathscr K)\subset G(\mathscr K)$. Ainsi $Q^\vee$ s'identifie \`a un sous-groupe de $T(\mathscr K)$ et on a $T(\mathscr K)=Q^\vee\ltimes T(\mathcal O)$, donc $Q^\vee\cap G(\mathcal O)=\{1\}$ : $Q^\vee$ se plonge dans $\mathcal G$.

On consid\`ere l'orbite $\mathcal G_\lambda=G(\mathcal O).t^\lambda$ de $t^\lambda$ sous $G(\mathcal O)$ dans $\mathcal G$; comme $ G(\mathcal O)$ contient $N$ qui agit sur $Q^\vee$ comme $W^v$, on peut supposer, sans changer $\mathcal G_\lambda$, que $\lambda\in Q^{\vee+}=Y^+$. La d\'ecomposition de Bruhat $G(\mathscr K)=G(\mathcal O)T(\mathscr K)G(\mathcal O)$ montre que $\mathcal G$ est r\'eunion disjointe des $\mathcal G_\lambda$ pour $\lambda\in Y^+$. L'adh\'erence $\overline{\mathcal G}_\lambda$ de $\mathcal G_\lambda$ dans $\mathcal G$ est en fait une vari\'et\'e alg\'ebrique complexe projective de dimension $(2\lambda,\rho)$ o\`u $\rho$ est la demi-somme des racines positives de $\Phi$ ($\rho\in X$, dual de $Y$).

Pour $\mu\in Y$, on peut aussi consid\'erer l'orbite $S^-_\mu=U^-(\mathscr K).t^\mu$ de $t^\mu$ sous $U^-(\mathscr K)$ dans $\mathcal G$. La d\'ecomposition d'Iwasawa $G(\mathscr K)=U^-(\mathscr K)T(\mathscr K)G(\mathcal O)$ nous dit que $\mathcal G$ est r\'eunion disjointe des $S^-_\mu$ pour $\mu\in Y$. On note $S_{\lambda,\mu}=S^-_\mu\cap\mathcal G_\lambda\subset\mathcal G$, c'est une vari\'et\'e alg\'ebrique complexe. Mirkovi\'c et Vilonen ont montr\'e (voir \cite{MV}) que toutes les composantes irr\'eductibles de $S_{\lambda,\mu}$ sont de dimension $(\lambda+\mu,\rho)$.

Les cycles de Mirkovi\'c-Vilonen sont les composantes irr\'eductibles de l'adh\'erence $\overline S_{\lambda,\mu}$ de $S_{\lambda,\mu}$ dans $\mathcal G$, ils ont donc pour dimension $(\lambda+\mu,\rho)$.

Ces cycles fournissent une autre formule des caract\`eres: la multiplicit\'e de $\mu$ dans la repr\'esentation $V(\lambda)$ de $G^\vee$ est le nombre de composantes irr\'eductibles de $\overline S_{\lambda,\mu}$.

On va faire le lien avec les chemins LS et la formule des caract\`eres de Littelmann (Th\'eor\`eme \ref{thCaract}). Pour cela on va utiliser l'immeuble de Bruhat-Tits $\mathcal I$ de $G$ sur $\mathscr K$ et on commence par rappeler quelques r\'esultats sur les immeubles affines.

\subsection{Immeubles affines}

Un immeuble affine (ou vectoriel) est un espace m\'etrique $\mathcal I$ recouvert par une famille de sous-espaces appel\'es appartements, tous isom\'etriques \`a l'appartement t\'emoin $\mathbb A$ du paragraphe \ref{sseAppart} par une isom\'etrie unique \`a $W^a$ pr\`es. Les alc\^oves de l'immeuble sont les alc\^oves de ses appartements. L'axiome fondamental suivant est satisfait:

Deux alc\^oves de l'immeuble sont contenues dans un m\^eme appartement et cet appartement est unique \`a un isomorphisme fixant (point par point) les deux alc\^oves pr\`es.

Si $\mathfrak a$ est une alc\^ove dans un appartement $A$ de $\mathcal I$, pour tout $x\in\mathcal I$, il existe un appartement $B$ contenant $\mathfrak a$ et $x$ et un isomorphisme $\varphi$ de $B$ sur $A$ fixant $\mathfrak a$. L'\'el\'ement $\varphi(x)\in A$ ne d\'epend pas des choix de $B$ et $\varphi$, on le note $\rho_{A,\mathfrak a}(x)$. L'application $\rho_{A,\mathfrak a}:\mathcal I\to A$ est la r\'etraction de $\mathcal I$ sur $A$ de centre $\mathfrak a$; elle diminue les distances, conserve les types et transforme une galerie (d'alc\^oves) en une autre galerie.

L'enclos $cl(\Omega)$ d'une partie $\Omega$ d'un appartement $A$ est l'intersection des demi-appartements de $A$ contenant $\Omega$. L'intersection de deux appartements $A,B$ est close (i.e. \'egale \`a son enclos) et les deux appartements sont isomorphes par un isomorphisme fixant leur intersection. Une galerie minimale dans un appartement reste dans l'enclos de ses extr\'emit\'es; ainsi dans l'immeuble une galerie minimale est dans tout appartement contenant ses extr\'emit\'es.

Un demi-appartement $D$ de mur $M=\partial D$ et une alc\^ove $\mathfrak a$ dont une cloison est dans $M$ sont toujours contenus dans un m\^eme appartement.

L'immeuble $\mathcal I$ est dit \'epais si toute cloison est contenue dans au moins 3 chambres. C'est v\'erifi\'e pour l'immeuble de $G$ sur $\mathscr K$, plus pr\'ecis\'ement dans ce cas, l'ensemble des alc\^oves contenant une cloison donn\'ee d'une alc\^ove $\mathfrak a_0$ (mais diff\'erentes de $\mathfrak a_0$) est param\'etr\'e par $\mathbb C$.

Pour plus de d\'etails sur les immeubles affines, on pourra se r\'ef\'erer \`a \cite{AB}, \cite{B} ou \cite{R}.

\subsection{Germes de quartier}

Un quartier dans $\mathbb A$ est un sous-ensemble de la forme $\mathfrak Q=x+C$ pour un point $x\in \mathbb A$ (son sommet) et une chambre de Weyl $C\subset V$ (sa direction). Deux quartiers sont \'equipollents si et seulement si leur intersection contient un autre quartier. Les classes d'\'equivalence sont les germes de quartier, elles sont en bijection avec les chambres de Weyl de $V$.

On a donc une notion de quartier ou de germe de quartier dans l'immeuble $\mathcal I$. Les immeubles de Bruhat-Tits, qui nous int\'eressent, ont deux propri\'et\'es particuli\`eres:

Deux germes de quartier sont toujours contenus dans un m\^eme appartement (contenir un germe de quartier signifie contenir l'un des quartiers du germe). Cela permet de montrer que ces germes constituent l'ensemble des chambres d'un immeuble sph\'erique $\mathcal I^s$ (au sens de \cite{T74}). On reviendra sur $\mathcal I^s$ \`a la section \ref{seGauss} (sous le nom $I_\infty$).

Un germe de quartier et une alc\^ove sont toujours contenus dans un m\^eme appartement (c'est une cons\'equence de la d\'ecomposition d'Iwasawa). Comme dans le paragraphe pr\'ec\'edent cela permet de d\'efinir une r\'etraction $\rho_{A,\mathfrak q}$ de l'immeuble sur un appartement $A$ de centre un germe de quartier $\mathfrak q$ contenu dans $A$. Cette r\'etraction aussi diminue les distances, conserve les types et transforme les galeries (d'alc\^oves) en galeries.

\subsection{L'immeuble $\mathcal I$ de $G$ sur $\mathscr K$}
Le groupe $G(\mathscr K)$ agit sur cet immeuble par des isom\'etries permutant les appartements et tous les isomorphismes d'appartements \'evoqu\'es ci-dessus peuvent \^etre choisis induits par un \'el\'ement de $G(\mathscr K)$, cf. \cite{BT} et \cite{BT2}.

1) Les appartements de $\mathcal I$ sont en bijection $G(\mathscr K)-$\'equivariante avec les tores maximaux d\'eploy\'es de $G$ sur $\mathscr K$. L'appartement fondamental (identifi\'e \`a $\mathbb A$) est associ\'e \`a $T$, il est stable par le normalisateur $N(\mathscr K)$. On suppose $G$ simplement connexe, donc $N(\mathscr K)$ agit sur $\mathbb A$ par des \'el\'ements de $W^a=W^v\ltimes Q^\vee$ (en g\'en\'eral il faut consid\'erer $W^v\ltimes Y$); en fait, pour $\mu\in Q^\vee$, $t^\mu$ agit par la translation de vecteur $\mu\in V=Y\otimes\mathbb R$. Ainsi $G(\mathscr K)$ permute les alc\^oves, quartiers, germes de quartiers, ...

2) Le fixateur du sommet sp\'ecial $0\in\mathbb A$ est $G(\mathcal O)$. Ainsi la grassmannienne affine $\mathcal G=G(\mathscr K)/G(\mathcal O)$ s'identifie \`a l'orbite de $0$ dans $\mathcal I$, c'est l'ensemble des sommets (sp\'eciaux) de type $0$ de $\mathcal I$.

L'ensemble des sommets (sp\'eciaux) de type $0$ de $\mathbb A$ est $Q^\vee=Y$, ses orbites sous $N(\mathcal O)$ ont pour syst\`eme de repr\'esentants $Y^+$. La vari\'et\'e $\mathcal G_\lambda$ est l'orbite dans $\mathcal I$ sous $G(\mathcal O)$ du sommet $\lambda\in Y^+$.

3) Si $\mathfrak q$ est un germe de quartier, son stabilisateur $G(\mathscr K)_{\mathfrak q}$, ensemble des \'el\'ements de $G(\mathscr K)$ transformant un quartier de $\mathfrak q$ en un (autre) quartier de $\mathfrak q$, est un sous-groupe de Borel de $G(\mathscr K)$. En fait $\mathcal I^s$ est l'immeuble de Tits de $G$ sur $\mathscr K$ dont les facettes correspondent aux paraboliques.

Le germe de quartier $\mathfrak q(C^v)$, de direction $C^v$ dans $\mathbb A$, a pour stabilisateur $B(\mathscr K)$. Son fixateur, ensemble des \'el\'ements fixant (point par point) un quartier de ce germe, est $U(\mathscr K)T(\mathcal O)$. Le groupe $T(\mathcal O)$ est le fixateur de $\mathbb A$.

De m\^eme le germe de quartier $\mathfrak q(-C^v)$ de direction $-C^v$ a pour stabilisateur $B^-(\mathscr K)$ et pour fixateur $U^-(\mathscr K)T(\mathcal O)$. On consid\`ere la r\'etraction $\rho_{-\infty}=\rho_{\mathbb A,\mathfrak q(-C^v)}$. Si $x\in\mathbb A$, son image r\'eciproque par $\rho_{-\infty}$ est l'orbite de $x$ sous $U^-(\mathscr K)$.

4) Pour $\lambda\in Y^+$ et $\mu\in Y$, on peut donc r\'einterpr\'eter $S_{\lambda,\mu}$ dans l'immeuble: c'est l'ensemble des sommets de $\mathcal G_\lambda=G(\mathcal O)\lambda$ dont l'image par $\rho_{-\infty}$ est $\mu$.

\bigskip
On va faire le lien avec les chemins LS ou plut\^ot avec des galeries LS que l'on va d\'efinir. Pour simplifier, on suppose $\lambda$ r\'egulier (i.e. $(\lambda,\alpha)\ne0,\forall\alpha\in\Phi$); sinon, il faut, soit consid\'erer des galeries de suites de faces d'alc\^oves, cf. \cite{GL}, Section 4, soit, changer un peu les d\'efinitions de galeries LS, cf \cite{BaGa}, Section 5.2.

\subsection{Galeries associ\'ees \`a un copoids $\lambda$}

Soit $\lambda\in Y^+_{reg}$, on consid\`ere, dans $\mathbb A$ une galerie minimale (d'alc\^oves) $\gamma_\lambda$ joignant $0$ \`a $\lambda$, de longueur $p$ et de type $t(\gamma_\lambda)=(i_1,...,i_p)\in(I\cup\{0\})^p$. On note $\mathfrak a_-$ l'alc\^ove de sommet $0$ dans $-C^v$.

Entre deux sommets de $\mathcal I$, il existe au plus une galerie minimale de type $t(\gamma_\lambda)$ et deux telles galeries de m\^eme origine sont conjugu\'ees par le fixateur de cette origine. Ainsi $\mathcal G_\lambda$ est en bijection avec l'ensemble $\widehat{\mathcal G}_\lambda$ des galeries minimales $\delta$ de $\mathcal I$ d'origine contenant $0$ et de type $t(\gamma_\lambda)$. A une telle galerie $\delta=(\mathfrak a_0,...,\mathfrak a_p)$ on peut rajouter une galerie minimale $(\mathfrak a_{-q},...,\mathfrak a_0)$ de $\mathfrak a_{-q}=\mathfrak a_{-}$ \`a l'origine $\mathfrak a_{0}$ de $\delta$, on obtient ainsi une nouvelle galerie minimale $\delta^*$ d'origine $\mathfrak a_{-}$.

Dans $\mathbb A$  on consid\`ere l'ensemble $\Gamma(\gamma_\lambda)$ des galeries de type $t(\gamma_\lambda)$ et d'origine contenant $0$. A une telle galerie $\gamma=(\mathfrak a'_0,...,\mathfrak a'_p)$ on peut rajouter une galerie minimale $(\mathfrak a'_{-q},...,\mathfrak a'_0)$ de $\mathfrak a'_{-q}=\mathfrak a_{-}$ \`a l'origine $\mathfrak a'_{0}$ de $\gamma$, on obtient ainsi une nouvelle galerie  $\gamma^*$ d'origine $\mathfrak a_{-}$.

La r\'etraction $\rho_{-\infty}$ applique \'evidemment $\widehat{\mathcal G}_\lambda$ dans $\Gamma(\gamma_\lambda)$. Si $\rho_{-\infty}(\delta)=\gamma$, alors $\rho_{-\infty}(\delta^*)=\gamma^*$.

\subsection{Plis et murs porteurs}

La galerie $\gamma=(\mathfrak a'_0,...,\mathfrak a'_p)\in \Gamma(\gamma_\lambda)$ est dite pli\'ee positivement si, pour tout $j\in\{1,...,p\}$ tel que $\mathfrak a'_{j-1}=\mathfrak a'_{j}$, le germe de quartier $\mathfrak q(-C^v)$ est s\'epar\'e de $\mathfrak a'_{j}$ par le mur $M_j$ contenant la cloison de type $i_j$ de $\mathfrak a'_{j}$. On note $\Gamma^+(\gamma_\lambda)$ l'ensemble des galeries pli\'ees positivement de $\Gamma(\gamma_\lambda)$.

Un mur $M$ est porteur pour la galerie $\gamma\in\Gamma^+(\gamma_\lambda)$ \`a l'\'etape $j\in\{-q+1,...,p\}$ si l'on est dans l'un des 3 cas suivants:

1) $j\leq0$ et $M=M_j$ s\'epare $\mathfrak a_{-}$ et $\mathfrak a'_{j-1}$ de $\mathfrak a'_{0}$ et $\mathfrak a'_{j}$ (donc $0\in M$).

2) $j>0$, $\mathfrak a'_{j-1}=\mathfrak a'_{j}$ et $M=M_j$ s\'epare $\mathfrak q(-C^v)$ de $\mathfrak a'_{j}$.

3) $j>0$, $\mathfrak a'_{j-1}\ne\mathfrak a'_{j}$ et $M=M_j$ ($\supset \mathfrak a'_{j-1}\cap\mathfrak a'_{j}$) s\'epare $\mathfrak q(-C^v)$ et $\mathfrak a'_{j-1}$ de $\mathfrak a'_{j}$.

\medskip La dimension de $\gamma$ est le nombre $dim(\gamma)$ de ces paires $(M,j)$.

\begin{prop}\label{prDim}
$\rho_{-\infty}(\widehat{\mathcal G}_\lambda)=\Gamma^+(\gamma_\lambda)$ et, pour $\gamma\in\Gamma^+(\gamma_\lambda)$, $dim(\gamma)$ est la <<dimension>> de $(\rho_{-\infty})^{-1}(\gamma)$.
\end{prop}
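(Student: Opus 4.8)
Le plan est de tout ramener \`a une \'etude locale, \'etape par \'etape, du comportement de $\rho_{-\infty}$ sur les alc\^oves contenant une cloison fix\'ee. Une r\'etraction conservant les types et transformant toute galerie d'alc\^oves en une galerie de m\^eme type, l'image $\rho_{-\infty}(\delta^*)$ d'une galerie minimale $\delta\in\widehat{\mathcal G}_\lambda$ (prolong\'ee depuis $\mathfrak a_-$, lequel est fix\'e par $\rho_{-\infty}$ puisqu'il est dans $\mathbb A$) est automatiquement une galerie de $\Gamma(\gamma_\lambda)$. Il reste donc, \`a chaque \'etape $j$, \`a comprendre l'application induite par $\rho_{-\infty}$ de l'ensemble $\mathcal C(F_j)$ des alc\^oves du b\^atiment contenant la cloison $F_j=\mathfrak a_{j-1}\cap\mathfrak a_j$ vers les (au plus deux) alc\^oves de $\mathbb A$ bordant le mur $M_j$ contenant $\rho_{-\infty}(F_j)$, \`a savoir l'alc\^ove $\mathfrak d^-$ du c\^ot\'e du germe $\mathfrak q(-C^v)$ et l'alc\^ove $\mathfrak d^+$ du c\^ot\'e oppos\'e.

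Le c\oe ur de l'argument est le mod\`ele local suivant, que je d\'eduirais des seuls axiomes rappel\'es plus haut (\'epaisseur : $\mathcal C(F_j)$ est r\'eunion d'une alc\^ove et d'une famille \`a un param\`etre $\mathbb C$ ; recollement d'un demi-appartement et d'une alc\^ove ; appartenance d'un germe de quartier et d'une alc\^ove \`a un m\^eme appartement). L'application $\rho_{-\infty}$ envoie une unique alc\^ove de $\mathcal C(F_j)$ sur $\mathfrak d^-$ et toutes les autres, famille \`a un param\`etre $\mathbb C$, sur $\mathfrak d^+$. Le cas $\mathrm{SL}_2$ (arbre, r\'etraction depuis un bout) illustre fid\`element ce ph\'enom\`ene : l'unique ar\^ete pointant vers le germe redescend, les $\mathbb C$ ar\^etes oppos\'ees se rel\`event toutes sur la m\^eme ar\^ete de $\mathbb A$. \`A $\mathfrak a_{j-1}$ donn\'ee (d'image $\mathfrak a'_{j-1}$) et $\mathfrak a_j\ne\mathfrak a_{j-1}$ \`a construire, on en tire tout le d\'ecompte. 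Lorsque $\mathfrak a'_j=\mathfrak d^+$ (du c\^ot\'e oppos\'e au germe), c'est-\`a-dire exactement lorsque $(M_j,j)$ est porteur (cas 2 du pli positif, ou cas 3 de la travers\'ee s'\'eloignant du germe), les ant\'ec\'edents de $\mathfrak a_j$ forment la famille $\mathbb C$, \'eventuellement priv\'ee du point $\mathfrak a_{j-1}$ dans le cas du pli : dans les deux cas une vari\'et\'e de dimension $1$. Lorsque $\mathfrak a'_j=\mathfrak d^-$, soit il s'agit d'une travers\'ee vers le germe (non porteuse) et l'ant\'ec\'edent est unique, soit $\mathfrak a'_{j-1}=\mathfrak d^-$ aussi et l'unique ant\'ec\'edent serait $\mathfrak a_{j-1}$, exclu : un pli n\'egatif est donc impossible et $\rho_{-\infty}(\delta^*)$ est bien pli\'ee positivement.

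J'assemblerais alors la proposition en deux temps. Pour l'\'egalit\'e $\rho_{-\infty}(\widehat{\mathcal G}_\lambda)=\Gamma^+(\gamma_\lambda)$, l'inclusion directe r\'esulte de l'impossibilit\'e des plis n\'egatifs ci-dessus ; pour la r\'eciproque, \'etant donn\'ee $\gamma\in\Gamma^+(\gamma_\lambda)$, je rel\`everais $\gamma^*$ \'etape par \'etape gr\^ace au mod\`ele local, chaque travers\'ee et chaque pli positif admettant au moins un rel\`evement avec $\mathfrak a_j\ne\mathfrak a_{j-1}$, le rel\`evement partant de l'alc\^ove fixe $\mathfrak a_-$ (d'o\`u une origine contenant $0$). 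La galerie $\delta$ obtenue ne b\'egaie jamais et est de type $t(\gamma_\lambda)$, type r\'eduit d'une galerie minimale : elle est donc minimale, et $\delta\in\widehat{\mathcal G}_\lambda$. Pour l'\'enonc\'e de dimension, on construit la fibre $(\rho_{-\infty})^{-1}(\gamma)$ alc\^ove par alc\^ove : le mod\`ele local la pr\'esente comme une tour de fibrations dont la fibre \`a l'\'etape $j$ est de dimension $1$ pr\'ecis\'ement quand $(M_j,j)$ est porteur, et r\'eduite \`a un point sinon. Par r\'ecurrence sur $j$, la dimension totale vaut donc le nombre de paires porteuses, soit $dim(\gamma)$.

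L'obstacle principal sera la justification rigoureuse du mod\`ele local --- le fait que $\rho_{-\infty}$ envoie une unique alc\^ove de $\mathcal C(F_j)$ sur $\mathfrak d^-$ et toute la famille restante sur $\mathfrak d^+$. Pour cela, je choisirais pour chaque alc\^ove $\mathfrak c\in\mathcal C(F_j)$ un appartement la contenant ainsi qu'un quartier de $\mathfrak q(-C^v)$, puis je comparerais les isomorphismes de recollement sur le demi-appartement commun du c\^ot\'e du germe, via l'unicit\'e des isomorphismes fixant une intersection close. Il faudra enfin mener la m\^eme analyse sur le segment initial (\'etapes $j\le0$, cas 1), o\`u les murs porteurs passent par $0$ et encodent la position de l'alc\^ove origine $\mathfrak a_0$, afin que le d\'ecompte pr\'ec\'edent reste valable jusqu'\`a $\mathfrak a_-$.
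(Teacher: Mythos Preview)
Votre esquisse est correcte et suit essentiellement la m\^eme d\'emarche que le papier : construction pas \`a pas de $\delta^*$ via l'analyse locale de $\rho_{-\infty}$ sur les alc\^oves partageant une cloison donn\'ee, avec la m\^eme dichotomie ($\mathbb C$ ou $\mathbb C^*$ rel\`evements dans les cas porteurs, un seul sinon, aucun pour un pli n\'egatif), et la minimalit\'e de $\delta^*$ d\'eduite du fait que son type est r\'eduit et qu'elle ne b\'egaie pas. La seule diff\'erence est de pr\'esentation : vous abstrayez un <<mod\`ele local>> uniforme (une alc\^ove vers $\mathfrak d^-$, la famille restante vers $\mathfrak d^+$) l\`a o\`u le papier distingue directement les cas selon la position de $\mathfrak a'_{j-1}$ relativement \`a $M_j$ et au germe $\mathfrak q(-C^v)$, mais l'argument g\'eom\'etrique sous-jacent --- choisir un appartement contenant l'alc\^ove et le germe, puis comparer --- est identique.
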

\begin{proof}[Id\'ee de d\'emonstration] Pour $\gamma\in\Gamma(\gamma_\lambda)$, cherchons $\delta\in\widehat{\mathcal G}_\lambda$ tel que $\rho_{-\infty}(\delta^*)=\gamma^*$. On construit les alc\^oves de $\delta^*$ de proche en proche par r\'ecurrence sur $j\in[-q,...,p]$. Supposons $\mathfrak a_{j-1}$ construit avec $\rho_{-\infty}(\mathfrak a_{j-1})=\mathfrak a'_{j-1}$ et cherchons $\mathfrak a_{j}$, $i_j-$mitoyenne mais diff\'erente de $\mathfrak a_{j-1}$ (i.e. $\mathfrak a_{j-1}\cap\mathfrak a_{j}$ est une cloison de type $i_j$), telle que $\rho_{-\infty}(\mathfrak a_{j})=\mathfrak a'_{j}$.

Si $\mathfrak a'_{j-1}$ et $\mathfrak q(-C^v)$ sont du m\^eme c\^ot\'e de $M_j$ (e.g. si $j\leq0$), alors il en est de m\^eme de $\mathfrak a_{j-1}$ et $\mathfrak q(-C^v)$ dans un appartement les contenant; ainsi toute alc\^ove $\mathfrak b$ $i_j-$mitoyenne mais diff\'erente de $\mathfrak a_{j-1}$ est dans un m\^eme appartement que $\mathfrak a_{j-1}$ et $\mathfrak q(-C^v)$, donc $\rho_{-\infty}(\mathfrak b)\ne\rho_{-\infty}(\mathfrak a_{j-1})$. Ainsi $\rho_{-\infty}(\widehat{\mathcal G}_\lambda)\subset\Gamma^+(\gamma_\lambda)$. De plus, si $\gamma\in\Gamma^+(\gamma_\lambda)$, on peut choisir pour $\mathfrak a_{j}$ tous les $\mathfrak b$ ci-dessus, d'o\`u un ensemble $\mathbb C$ de param\`etres. On est dans le cas 1) ou 3) de la d\'efinition de mur porteur.

Si $\mathfrak a'_{j-1}$ et $\mathfrak q(-C^v)$ sont de part et d'autre de $M_j$, il en est de m\^eme de $\mathfrak a_{j-1}$ et $\mathfrak q(-C^v)$. Parmi les cloisons $\mathfrak b$ $i_j-$mitoyennes mais diff\'erentes de $\mathfrak a_{j-1}$, il y en a une $\mathfrak b_0$ dans le m\^eme appartement que $\mathfrak a_{j-1}$ et $\mathfrak q(-C^v)$, son image $\rho_{-\infty}(\mathfrak b_0)$ est diff\'erente de  $\mathfrak a'_{j-1}$. Pour toutes les autres, $\mathfrak b$, $\mathfrak b_0$ et $\mathfrak q(-C^v)$ sont dans un m\^eme appartement, donc $\rho_{-\infty}(\mathfrak b)\ne\rho_{-\infty}(\mathfrak b_{0})$ et ainsi $\rho_{-\infty}(\mathfrak b)=\mathfrak a'_{j-1}$. Si le mur $M_j$ n'est pas porteur, $\mathfrak b_0$ est le seul choix possible pour $\mathfrak a_{j}$. Si le mur $M_j$ est porteur (cas 2) de la d\'efinition) les choix possibles pour $\mathfrak a_{j}$ sont les $\mathfrak b\ne\mathfrak b_0$; on a donc un ensemble de param\`etres \'egal \`a $\mathbb C^*$.

Comme le type de $\delta^*$ est le type d'une galerie minimale et comme les alc\^oves successives sont \`a chaque fois distinctes, il est classique que $\delta^*$ est minimale.
\end{proof}

\subsection{Galeries LS}

Si $\gamma$ est une galerie de $\mathcal I$ de type $t(\gamma_\lambda)$, on appelle but de cette galerie le sommet de la derni\`ere alc\^ove non dans la cloison de type $i_p$. On note $\Gamma^+(\gamma_\lambda,\mu)$ l'ensemble des galeries de $\Gamma^+(\gamma_\lambda)$  de but $\mu$. On montre dans \cite{GL}

\begin{prop}\label{prGalLS}
Si $\gamma\in \Gamma^+(\gamma_\lambda,\mu)$, alors $dim(\gamma)\leq(\lambda+\mu,\rho)$.
\end{prop}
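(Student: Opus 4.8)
Le plan est de d\'eduire l'in\'egalit\'e des deux r\'esultats d\'ej\`a rassembl\'es dans cette section : l'identification des fibres de $\rho_{-\infty}$ men\'ee dans la Proposition \ref{prDim}, et la formule de dimension de Mirkovi\'c et Vilonen rappel\'ee au \S\ref{sseGrass}. L'id\'ee est que $dim(\gamma)$ n'est rien d'autre que la dimension d'un morceau constructible de la vari\'et\'e $S_{\lambda,\mu}$, laquelle est de dimension pure $(\lambda+\mu,\rho)$.

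Je commencerais par pr\'eciser comment les strates s'embo\^{\i}tent. Via la bijection $\mathcal{G}_\lambda\cong\widehat{\mathcal{G}}_\lambda$, un point $x\in\mathcal{G}_\lambda$ correspond \`a une galerie minimale $\delta_x$, et le but de $\rho_{-\infty}(\delta_x)$ est exactement $\rho_{-\infty}(x)$, puisque prendre la r\'etraction commute avec la lecture du sommet terminal. Ainsi $\rho_{-\infty}(\delta_x)$ est de but $\mu$ si et seulement si $\rho_{-\infty}(x)=\mu$, c'est-\`a-dire si $x$ appartient \`a $S_{\lambda,\mu}=S^-_\mu\cap\mathcal{G}_\lambda$. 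Par cons\'equent, lorsque $\gamma$ parcourt $\Gamma^+(\gamma_\lambda,\mu)$, les fibres $(\rho_{-\infty})^{-1}(\gamma)$ forment une partition de $S_{\lambda,\mu}$ en morceaux localement ferm\'es.

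Ensuite, d'apr\`es la Proposition \ref{prDim} et la construction explicite de sa preuve, chaque fibre $(\rho_{-\infty})^{-1}(\gamma)$ s'obtient en choisissant, \`a chaque paire porteuse $(M,j)$, un param\`etre dans $\mathbb{C}$ (cas 1 et 3) ou dans $\mathbb{C}^*$ (cas 2) ; elle est donc isomorphe \`a $\mathbb{C}^a\times(\mathbb{C}^*)^b$ avec $a+b=dim(\gamma)$, c'est une sous-vari\'et\'e irr\'eductible de $S_{\lambda,\mu}$ de dimension exactement $dim(\gamma)$. Comme Mirkovi\'c et Vilonen montrent que toute composante irr\'eductible de $S_{\lambda,\mu}$ est de dimension $(\lambda+\mu,\rho)$, la vari\'et\'e $S_{\lambda,\mu}$ est de dimension pure $(\lambda+\mu,\rho)$ et aucune de ses sous-vari\'et\'es ne peut d\'epasser cette dimension. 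On conclut $dim(\gamma)=dim\,(\rho_{-\infty})^{-1}(\gamma)\leq(\lambda+\mu,\rho)$, ce qui est l'\'enonc\'e.

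L'\'etape qui me semble d\'elicate est la compatibilit\'e entre la <<dimension>> combinatoire $dim(\gamma)$, compt\'ee par les murs porteurs, et la vraie dimension alg\'ebrique de la fibre : il faut v\'erifier que le param\'etrage de la preuve de la Proposition \ref{prDim} r\'ealise bien $(\rho_{-\infty})^{-1}(\gamma)$ comme une sous-vari\'et\'e localement ferm\'ee de $\mathcal{G}_\lambda$ sur laquelle le d\'ecompte des facteurs $\mathbb{C}$ et $\mathbb{C}^*$ donne la dimension effective, sans effondrement entre deux choix successifs. Une fois cette identification acquise, la borne est imm\'ediate ; une preuve purement combinatoire, bornant directement le nombre de murs porteurs contre $(\lambda+\mu,\rho)$ par un d\'ecompte des trois types de murs, reste possible mais demande une analyse de cas plus lourde, que l'appel \`a la g\'eom\'etrie permet pr\'ecis\'ement d'\'eviter.
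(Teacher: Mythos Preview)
Le texte ne d\'emontre pas cette proposition : la phrase qui la pr\'ec\`ede renvoie explicitement \`a \cite{GL}, o\`u la borne est obtenue par un argument purement combinatoire (un d\'ecompte direct des murs porteurs, reli\'e \`a la longueur dans le groupe de Weyl affine et \`a la formule $(\lambda+\mu,\rho)$). Votre argument, lui, est g\'eom\'etrique : vous plongez la fibre $(\rho_{-\infty})^{-1}(\gamma)$ dans $S_{\lambda,\mu}$ via la bijection $\widehat{\mathcal G}_\lambda\cong\mathcal G_\lambda$, puis vous invoquez le r\'esultat de Mirkovi\'c--Vilonen rappel\'e au \S\ref{sseGrass} pour borner sa dimension. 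C'est correct et non circulaire, puisque le th\'eor\`eme de \cite{MV} est \'etabli ind\'ependamment.

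La diff\'erence de strat\'egie a une cons\'equence sur le \S3.8 : le texte y combine la proposition (prouv\'ee combinatoirement dans \cite{GL}) avec le r\'esultat de \cite{MV} pour \emph{identifier} les cycles MV aux strates de dimension maximale ; avec votre preuve, la proposition devient un corollaire imm\'ediat de \cite{MV} et l'apport propre de \cite{GL} se r\'eduit au param\'etrage cellulaire des fibres (Proposition \ref{prDim}). Votre raccourci est donc \'el\'egant mais d\'eplace le poids de la d\'emonstration sur \cite{MV}, alors que l'approche de \cite{GL} garde la borne dimensionnelle interne \`a la combinatoire des galeries --- ce qui est pr\'ecis\'ement le <<d\'ecompte de cas plus lourd>> que vous \'evoquez \`a la fin. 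Le point d\'elicat que vous signalez (co\"{\i}ncidence entre la <<dimension>> combinatoire de la Proposition \ref{prDim} et la dimension alg\'ebrique de la fibre) est bien le seul \`a justifier ; il se r\`egle en remarquant que le param\'etrage successif de la preuve de la Proposition \ref{prDim} r\'ealise la fibre comme une cellule d'une d\'ecomposition de type Bott--Samelson, donc comme une sous-vari\'et\'e localement ferm\'ee isomorphe \`a un produit $\mathbb C^a\times(\mathbb C^*)^b$.
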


La galerie $\gamma\in \Gamma^+(\gamma_\lambda,\mu)$ est dite LS si $dim(\gamma)=(\lambda+\mu,\rho)$. On note $\Gamma^+_{LS}(\gamma_\lambda,\mu)$ l'ensemble de ces galeries.

Dans \cite{GL} on montre que ces galeries LS s'obtiennent \`a partir de $\gamma_\lambda$ en it\'erant des op\'erateurs $f_\alpha$ analogues \`a ceux de \ref{sseOpera} et on prouve la formule des caract\`eres suivante :

\begin{thm} \label{thCarGal}
La multiplicit\'e du poids $\mu $ dans la repr\'esentation $V(\lambda)$ de $G^\vee$ est le nombre de galeries dans $\Gamma^+_{LS}(\gamma_\lambda,\mu)$.
\end{thm}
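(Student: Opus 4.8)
L'id\'ee est de comparer le comptage des galeries LS \`a la formule des caract\`eres de Mirkovi\'c-Vilonen rappel\'ee plus haut : la multiplicit\'e de $\mu$ dans $V(\lambda)$ est le nombre de composantes irr\'eductibles de $\overline S_{\lambda,\mu}$ \cite{MV}. Il suffit donc d'\'etablir une bijection naturelle entre $\Gamma^+_{LS}(\gamma_\lambda,\mu)$ et l'ensemble de ces composantes. Le plan est de stratifier $S_{\lambda,\mu}$ par les fibres de la r\'etraction $\rho_{-\infty}$ restreinte \`a $\widehat{\mathcal G}_\lambda$, puis de v\'erifier que les composantes de dimension maximale sont exactement celles provenant des galeries LS.

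Commen\c{c}ons par la stratification. Via la bijection $G(\mathcal O)$-\'equivariante $\mathcal G_\lambda\simeq\widehat{\mathcal G}_\lambda$, un sommet $x\in\mathcal G_\lambda$ s'identifie \`a l'unique galerie minimale $\delta$ de type $t(\gamma_\lambda)$, d'origine contenant $0$ et de but $x$; comme $\rho_{-\infty}$ conserve les types et envoie galeries sur galeries, $\rho_{-\infty}(x)$ n'est autre que le but de la galerie repli\'ee $\rho_{-\infty}(\delta)\in\Gamma^+(\gamma_\lambda)$ (c'est la compatibilit\'e de $\rho_{-\infty}$ avec le but, sous-jacente \`a la Proposition \ref{prDim}). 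Ainsi $S_{\lambda,\mu}$, l'ensemble des sommets de $\mathcal G_\lambda$ d'image $\mu$, est la r\'eunion disjointe des fibres
$$
C_\gamma=(\rho_{-\infty})^{-1}(\gamma),\qquad \gamma\in\Gamma^+(\gamma_\lambda,\mu).
$$

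Il s'agit ensuite d'analyser chaque fibre $C_\gamma$. La construction de proche en proche men\'ee dans la preuve de la Proposition \ref{prDim} montre que $C_\gamma$ est une cellule, isomorphe \`a un produit $\mathbb C^a\times(\mathbb C^*)^b$ : chaque \'etape \`a mur porteur apporte un param\`etre dans $\mathbb C$ (cas 1 et 3) ou dans $\mathbb C^*$ (cas 2), les autres \'etapes n'en apportant aucun, et $a+b=dim(\gamma)$. En particulier $C_\gamma$ est irr\'eductible, localement ferm\'ee et dense dans son adh\'erence, de dimension $dim(\gamma)$. D'apr\`es la Proposition \ref{prGalLS}, on a $dim(\gamma)\leq(\lambda+\mu,\rho)$, avec \'egalit\'e si et seulement si $\gamma$ est LS.

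On conclut par un argument de dimension. Puisque $S_{\lambda,\mu}=\bigsqcup_\gamma C_\gamma$ est une r\'eunion finie de parties localement ferm\'ees irr\'eductibles, les composantes irr\'eductibles de $\overline S_{\lambda,\mu}$ figurent parmi les adh\'erences $\overline{C_\gamma}$. Or Mirkovi\'c-Vilonen assurent que toutes ces composantes ont la dimension $(\lambda+\mu,\rho)$; vu la Proposition \ref{prGalLS}, ce sont donc exactement les $\overline{C_\gamma}$ pour $\gamma$ LS. Deux galeries LS distinctes donnent des composantes distinctes, car une \'egalit\'e $\overline{C_\gamma}=\overline{C_{\gamma'}}$ forcerait les deux cellules, denses dans cette m\^eme vari\'et\'e irr\'eductible, \`a se rencontrer, contredisant la disjonction des fibres. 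L'application $\gamma\mapsto\overline{C_\gamma}$ r\'ealise alors la bijection voulue, et le th\'eor\`eme en d\'ecoule. Le point d\'elicat est le contr\^ole exact de la structure cellulaire des $C_\gamma$ (irr\'eductibilit\'e et dimension), combin\'e \`a l'\'egalit\'e des dimensions de toutes les composantes de $\overline S_{\lambda,\mu}$, qui est ce qui rend le comptage propre; une voie alternative, celle de \cite{GL}, consiste \`a doter $\Gamma^+_{LS}(\gamma_\lambda,\mu)$ d'op\'erateurs $f_\alpha$ analogues \`a ceux du \ref{sseOpera}, \`a l'identifier ainsi aux chemins LS de type $\lambda$ et d'extr\'emit\'e $\mu$, puis \`a conclure par la formule des caract\`eres de Littelmann (Th\'eor\`eme \ref{thCaract}).
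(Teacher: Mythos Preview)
Your argument is correct and is essentially the one the paper itself sketches in the subsection immediately following the theorem (``Lien avec les cycles de Mirkovi\'c-Vilonen''): stratify $S_{\lambda,\mu}$ by the fibers $C_\gamma=(\rho_{-\infty})^{-1}(\gamma)$, use Proposition~\ref{prGalLS} and the equidimensionality of the MV cycles to single out the LS strata, and conclude via the Mirkovi\'c--Vilonen character formula. Note, however, that the paper does not present this as its proof of the theorem: it merely cites \cite{GL}, where the argument proceeds instead via root operators $f_\alpha$ on galleries (the alternative you mention in your last sentence), identifying $\Gamma^+_{LS}(\gamma_\lambda,\mu)$ with the LS paths of type $\lambda$ ending at $\mu$ and invoking Littelmann's Theorem~\ref{thCaract}. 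The paper explicitly calls the MV-cycle route a ``raisonnement approximatif'', to be consolidated using the Bott--Samelson resolution $\widehat\Sigma(\gamma_\lambda)\to\overline{\mathcal G}_\lambda$ in order to handle closures properly; your treatment of closures and of the injectivity of $\gamma\mapsto\overline{C_\gamma}$ is the right consolidation.

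One small overstatement: from the step-by-step construction in Proposition~\ref{prDim}, the fiber $C_\gamma$ is an \emph{iterated} $\mathbb A^1$/$\mathbb G_m$-bundle over a point (the parameter space at stage $j$ depends on the choices already made), not a priori a global product $\mathbb C^a\times(\mathbb C^*)^b$. This is harmless for your purposes, since only irreducibility, local closedness, and the dimension count $a+b=\dim(\gamma)$ are used, and all three follow directly from the iterated-bundle description.
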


\subsection{Lien avec les cycles de Mirkovi\'c-Vilonen}

La vari\'et\'e $S_{\lambda,\mu}\subset\mathcal G_\lambda$ est l'ensemble des buts de galeries de  $\widehat{\mathcal G}_\lambda$ dont l'image par $\rho_{-\infty}$ est $\mu$. Elle est donc en bijection avec $(\rho_{-\infty})^{-1}(\Gamma^+_{LS}(\gamma_\lambda,\mu))$, r\'eunion des $(\rho_{-\infty})^{-1}(\gamma)$ qui sont de dimension $dim(\gamma)\leq(\lambda+\mu,\rho)$. Les composantes irr\'eductibles de $S_{\lambda,\mu}$ \'etant de dimension $(\lambda+\mu,\rho)$, ce sont les $(\rho_{-\infty})^{-1}(\gamma)$ pour $\gamma\in \Gamma^+_{LS}(\gamma_\lambda,\mu)$. D'o\`u le lien entre la formule des caract\`eres du Th\'eor\`eme \ref{thCaract} ci-dessus et celle du paragraphe \ref{sseGrass}.

Dans le raisonnement approximatif pr\'ec\'edent, on a n\'eglig\'e de prendre les adh\'erences dans $\mathcal G$. Soit $\widehat\Sigma(\gamma_\lambda)$ l'ensemble de toutes les galeries (minimales ou non) dans $\mathcal I$ d'origine contenant $0$ et de type $t(\gamma_\lambda)$; c'est une vari\'et\'e alg\'ebrique projective lisse. L'application <<but>> est birationnelle d'image $\overline{\mathcal G}_\lambda$: on a une r\'esolution des singularit\'es de $\overline{\mathcal G}_\lambda$ analogue \`a la vari\'et\'e de Bott-Samelson associ\'ee \`a une vari\'et\'e de Schubert.

Comme $\widehat{\mathcal G}_\lambda$ est un ouvert de $\widehat\Sigma(\gamma_\lambda)$ (d\'efini par le fait que toutes les alc\^oves d'une galerie sont diff\'erentes), ${\mathcal G}_\lambda$ est un ouvert de $\overline{\mathcal G}_\lambda$ et le raisonnement approximatif ci-dessus peut \^etre consolid\'e pour prouver la bijection entre les cycles de Mirkovi\'c-Vilonen de $\overline{S}_{\lambda,\mu}$ et les galeries de $\Gamma^+_{LS}(\gamma_\lambda,\mu)$, voir \cite{GL} et \cite{BaGa}.

\subsection{Lien entre galeries LS et chemins LS}
\label{sseGalChe}

On peut supposer que la galerie $\gamma_\lambda$ contient le segment $[0,\lambda]$. Toute galerie $\gamma\in\Gamma(\gamma_\lambda)$ est obtenue par une succession de pliages \`a partir d'une image de $\gamma_\lambda$ par un $w\in W^v$, elle contient un chemin $\pi_\gamma$ obtenu par pliage du chemin $w\pi_\lambda$ et $\pi_\gamma(1)$ est le but de $\gamma$. Comme les pliages de $\gamma$ se font selon des murs, $\pi_\gamma$ est un vrai billard et il est facile de voir que, si $\gamma$ est pli\'ee positivement, alors $\pi_\gamma$ l'est aussi, on peut m\^eme montrer que $\pi_\gamma$ est Hecke.

Si on appelle mur porteur pour un chemin $\pi$ tout mur quitt\'e positivement par $\pi$ en $t\in[0,1[$ (i.e. $\pi(t)$ est dans le mur et $\pi(t+\varepsilon)$ est du c\^ot\'e positif) alors  la condition {\bf (LS3)} (de d\'efinition des chemins LS) \'equivaut au fait que le nombre de murs porteurs pour $\pi$ est $(\lambda + \pi(1),\rho)$. Ainsi les galeries LS correspondent aux chemins LS. Ce point de vue est d\'evelopp\'e dans \cite{GR}.

Dans ce cadre la Proposition \ref{prDim} devient:

\begin{prop} Un chemin dans $\mathbb A$ est l'image par $\rho_{-\infty}$ d'un segment de $\mathcal I$ (parcouru \`a vitesse constante) si, et seulement si, il est de Hecke.
\end{prop}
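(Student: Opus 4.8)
L'approche consiste \`a traduire la Proposition \ref{prDim} \`a travers le dictionnaire galeries--chemins du paragraphe \ref{sseGalChe}. On poserait $\lambda=pr_{C^v}(\sigma')$, le type d'un segment $\sigma:[0,1]\to\mathcal I$ parcouru \`a vitesse constante, et l'on fixerait (quitte \`a supposer $\lambda$ r\'egulier, le cas g\'en\'eral se ramenant au cas r\'egulier par perturbation ou par passage aux galeries de faces, cf. \cite{GL}, \cite{BaGa}) une galerie minimale $\gamma_\lambda$ contenant $[0,\lambda]$. On rappellerait alors de \ref{sseGalChe} les trois ingr\'edients : \`a toute galerie $\gamma\in\Gamma(\gamma_\lambda)$ est associ\'e un chemin contenu $\pi_\gamma$ ; $\gamma$ est pli\'ee positivement si et seulement si $\pi_\gamma$ est de Hecke ; et $\rho_{-\infty}$ entrelace ces constructions, au sens o\`u, si $\delta$ est une galerie minimale de $\mathcal I$ contenant un segment $\sigma$ et v\'erifiant $\rho_{-\infty}(\delta)=\gamma$, alors $\rho_{-\infty}(\sigma)=\pi_\gamma$.

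Pour le sens direct, on inclurait $\sigma$ dans une telle galerie minimale $\delta$, d'origine contenant $0$ et de type $t(\gamma_\lambda)$ ; la Proposition \ref{prDim} assure que $\gamma:=\rho_{-\infty}(\delta)\in\Gamma^+(\gamma_\lambda)$ est pli\'ee positivement, de sorte que $\rho_{-\infty}(\sigma)=\pi_\gamma$ est de Hecke. Pour la r\'eciproque, \`a un chemin de Hecke $\pi$ on associerait via \ref{sseGalChe} une galerie pli\'ee positivement $\gamma$ avec $\pi_\gamma=\pi$, que la Proposition \ref{prDim} rel\`everait en une galerie $\delta\in\widehat{\mathcal G}_\lambda$ telle que $\rho_{-\infty}(\delta)=\gamma$ ; le segment $\sigma$ contenu dans $\delta$ v\'erifierait alors $\rho_{-\infty}(\sigma)=\pi$.

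Le point central de l'\'enonc\'e, et ce que l'on gagnerait \`a expliciter directement (ce qui couvrirait aussi le cas $\lambda$ non r\'egulier), est l'analyse locale en un point anguleux $t_0$ de $\pi$. On choisirait, pour $t$ voisin de $t_0$ \`a gauche (resp. \`a droite), un appartement $B_-$ (resp. $B_+$) contenant $\sigma(t)$ et le germe $\mathfrak q(-C^v)$ ; sur chacun $\rho_{-\infty}$ est une isom\'etrie, d'o\`u $\pi'_\pm(t_0)$ comme images de l'unique direction de $\sigma$ en $x_0=\sigma(t_0)$. Ceci montre au passage que $pr_{C^v}(\pi'_\pm(t_0))=\lambda$ (car $\rho_{-\infty}$ conserve les types), que $\pi$ est un $\lambda$-chemin, et qu'il est lin\'eaire par morceaux puisque $\sigma$ ne traverse qu'un nombre fini de murs. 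Comme le changement de carte de $B_-$ \`a $B_+$ fixe l'enclos de $\{x_0\}\cup\mathfrak q(-C^v)$, donc $\pi(t_0)$ et tous les murs par lesquels $\sigma$ passe en $x_0$, on obtiendrait $\pi'_+(t_0)\in W^v_{\pi(t_0)}\,\pi'_-(t_0)$, soit d\'ej\`a {\bf (H3)}.

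Le point le plus d\'elicat sera la positivit\'e du pliage, c'est-\`a-dire {\bf (H1)} et {\bf (H2)}, \'equivalentes \`a $w_+(t_0)\leq w_-(t_0)$ dans $W^v$. On l'\'etablirait en reprenant les cas 2) et 3) de la d\'efinition de mur porteur : la description $\rho_{-\infty}^{-1}(\pi(t_0))=U^-(\mathscr K)\,x_0$ force, lorsque $\sigma$ traverse un mur $M=M(\beta,k)$ en $x_0$, la s\'eparation du germe $\mathfrak q(-C^v)$ et de la partie sortante de $\sigma$ par $M$, d'o\`u $\beta>0$ et la d\'ecroissance de Bruhat de $w_-$ vers $w_+$. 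En composant ces r\'eflexions \'el\'ementaires, une par mur de $W^v_{\pi(t_0)}$ effectivement travers\'e, on reconstituerait la $W^v_{\pi(t_0)}$-cha\^{\i}ne pli\'ee positivement voulue, c'est-\`a-dire la condition de Hecke.
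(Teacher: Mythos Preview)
Your route via the gallery--path dictionary of \S\ref{sseGalChe} and Proposition~\ref{prDim} is different from the paper's. The paper simply says the proof is \emph{analogous to that of Theorem~\ref{thPliage}}: one adapts Proposition~\ref{prLocGlob} by replacing the alc\^ove $\mathfrak a$ with the sector germ $\mathfrak q(-C^v)$, so that the differential $d(\rho_{-\infty})_x$ is the retraction of $\Sigma_x\mathcal I$ onto $\overrightarrow{\mathbb A x}$ centred at the chamber in the direction $-C^v$; then \S\S\ref{ssePliage}--\ref{sseDepli} and Proposition~\ref{prChaines}, applied in each tangent building, yield \emph{both} implications --- folding (segment $\Rightarrow$ Hecke) and unfolding (Hecke $\Rightarrow$ segment).

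Your proposal has a real gap in the converse direction. The dictionary of \S\ref{sseGalChe} only goes from positively folded galleries to Hecke paths; you assert that ``\`a un chemin de Hecke $\pi$ on associerait via \ref{sseGalChe} une galerie pli\'ee positivement $\gamma$ avec $\pi_\gamma=\pi$'', but nothing in \S\ref{sseGalChe} furnishes this reverse map. Moreover Proposition~\ref{prDim} is formulated for galleries whose first alc\^ove contains $0$, so even granting the dictionary you would only recover segments based at $0$, whereas the Proposition concerns arbitrary segments of $\mathcal I$. Your ``direct'' local analysis in the last two paragraphs treats only the forward implication; the essential unfolding step --- producing, in $\Sigma_{\pi(t_0)}\mathcal I$, a vector $\eta$ opposite to $-\pi'_-(t_0)$ with $d(\rho_{-\infty})(\eta)=\pi'_+(t_0)$, and then propagating along the path as in the proof of Proposition~\ref{prLocGlob} --- is simply absent. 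This is precisely what Proposition~\ref{prDepli} supplies in the paper's approach. A minor point on the forward direction: it is not $\sigma$ that ``traverse un mur'' ($\sigma$ is a geodesic of $\mathcal I$, not of $\mathbb A$); the fold of $\pi$ comes from the change of apartment $B_-\to B_+$, and the clean way to extract {\bf(H1)}--{\bf(H2)} is exactly the chain argument of \S\ref{ssePliage} in the tangent building, not the load-bearing-wall description, which is tailored to galleries.
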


La d\'emonstration est tout \`a fait analogue \`a celle du Th\'eor\`eme \ref{thPliage} ci-dessous.

\newpage
\section{Pliage et d\'epliage de triangles}
\label{seDepli}

\subsection{Du local au global}
\subsubsection{Immeuble tangent}
\label{ssseTang}

Dans cette partie, $\mathcal I$ d\'esigne un immeuble affine (ou vectoriel) \'epais g\'en\'eral. Soit $x\in\mathcal I$, l'\'etoile $x^*$ (r\'eunion des facettes qui contiennent $x$) est, comme ensemble de facettes, un immeuble combinatoire sph\'erique \'epais. Sa r\'ealisation vectorielle, not\'ee $\Sigma_x(\mathcal I)$, est l'immeuble tangent de $\mathcal I$ en $x$.

En fait $\Sigma_x(\mathcal I)$ est le quotient de $x^*\times\mathbb R_+$ par la relation: $(y,\lambda)\sim(z,\mu)\Leftrightarrow$ $y$ et $z$ sont dans le m\^eme segment d'origine $x$ d'une facette de $x^*$ et, pour $N$ grand, $(1-\lambda/N)x+(\lambda/N)y=(1-\mu/N)x+(\mu/N)z$. On note $\lambda\overrightarrow{xy}$ la classe de $(y,\lambda)$.
Le point $x$ s'identifie au point $0=0_x=\lambda\overrightarrow{xx}=0\overrightarrow{xy}$ de $\Sigma_x(\mathcal I)$.

 Les appartements de $\Sigma_x(\mathcal I)$ sont les espaces vectoriels
$$
\overrightarrow{Ax} = \{ \lambda \overrightarrow{xy}\mid y\in A\cap x^*\,,\,\lambda\in\mathbb R_+\},
$$
pour $A$ un appartement affine contenant $x$. Les chambres de $\Sigma_x(\mathcal I)$ sont les $\vec{c}=\{ \lambda \overrightarrow{xy}\mid y\in c\,,\,\lambda\in\mathbb R_+\}$ pour $c$ une alc\^ove de $x^*$. Le groupe de Weyl de $\overrightarrow{Ax}$ est  le groupe $W^v_x$ engendr\'e par les r\'eflexions lin\'eaires associ\'ees aux r\'eflexions affines de $A$ fixant $x$.

Si $\vec v\in\Sigma_x(\mathcal I)$, alors $x+\vec v$ est bien d\'efini dans $\mathcal I$ si $\Vert\vec v\Vert$ est assez petit pour que $x+\vec v\in x^*$, sinon, il faut pr\'eciser un appartement $A\ni x$ tel que $\overrightarrow{Ax}\ni \vec v$. Deux \'el\'ements $\vec v$ et $\vec v'$ de  $\Sigma_x(\mathcal I)$ sont dits oppos\'es s'ils sont dans un m\^eme appartement $\overrightarrow{Ax}$ et oppos\'es dans celui-ci. On note $\vec v'=op_{\overrightarrow{Ax}}(\vec v)$.

Si $\pi$ est un chemin lin\'eaire par morceaux (donc r\'eunion de segments contenus dans des appartements), on peut d\'efinir, pour tout $t$,  les d\'eriv\'ees $\pi'_+(t)$ et $-\pi'_-(t)$ dans $\Sigma_{\pi(t)}\mathcal I$ de mani\`ere intrins\`eque (i.e. ind\'ependante du choix d'appartements). Le chemin $\pi$ est d\'erivable en $t$ si et seulement si $\pi'_+(t)$ et $-\pi'_-(t)$ sont oppos\'es dans $\Sigma_{\pi(t)}\mathcal I$

\subsubsection{Diff\'erentielle}

Soit $f:\mathcal I\to\mathcal J$ un morphisme d'immeubles affines. En particulier, $f(x^*) \subset (f(x))^*$. Pour tout $x\in \mathcal I$, on d\'efinit la diff\'erentielle de $f$ comme \'etant la r\'ealisation vectorielle de la restriction de $f$ \`a $x^*$ et on la note $df_x : \Sigma_x(\mathcal I) \to \Sigma_{f(x)}(\mathcal J)$. C'est un morphisme d'immeubles vectoriels.

Si $\vec v\in\Sigma_x(\mathcal I)$, soit $\varepsilon >0$ tel que $x+\varepsilon\vec v\in x^*$ alors $f(x+\varepsilon\vec v)\in f(x)^*$. On a
$$
df_x(\vec v) = \frac{1}{\varepsilon}\overrightarrow{f(x)f(x+\varepsilon\vec v)}\ .
$$
\noindent{\bf Propri\'et\'e. } Soient $A$ un appartement, $c$ une alc\^ove et $\rho = \rho_{A,c}$ la r\'etraction sur $A$ centr\'ee en $c$. Si $x\in A$ alors $d\rho_x = \rho_{\overrightarrow{Ax},\vec c_x}$, o\`u $\vec c_x$ est la chambre de $\overrightarrow{Ax}$ qui contient tous les $\overrightarrow{xy}$, $y\in c$ (autrement dit $\vec c_x = \overrightarrow{proj_x(c)}$, si $proj_x(c)$ est l'alc\^ove de $x^*$ \`a distance minimale de $c$).

Si $x\not\in A$, soit $B\supset c\cup \{x\}$ un autre appartement, soit $\varphi\; :$
$
\begin{CD}
B@>\simeq>{}>A
\end{CD}
$ l'isomorphisme qui fixe $c$ et $x$, alors $d\rho_x = d\varphi_x\circ \rho_{\overrightarrow{Bx},\vec c_x}$. En effet $\rho=\varphi\circ\rho_{B,c}$.

\subsubsection{Crit\`ere local}

\begin{prop}
\label{prLocGlob}
Soit $[z,x,x_1,...,x_n,y,z]$ un polygone dans un appartement $A$. Soit $\pi:[0,1]\to A$  une param\'etrisation (lin\'eaire par morceaux et \`a vitesse constante) de $[x,x_1,...,x_n,y]$ avec $x_i = \pi(t_i)$. Soit $a$ une alc\^ove contenant $z$ et soit $\rho = \rho_{A,a}$ la r\'etraction sur $A$ centr\'ee en $a$.

Alors $[z,x,x_1,...,x_n,y,z]$ est l'image par $\rho$ d'un triangle $[z,x,\tilde y,z]$ si, et seulement si, pour tout $i\in\{1,...,n\}$, $[\overrightarrow{x_iz}, -\pi'_-(t_i), 0_{x_i}, \pi'_+(t_i), \overrightarrow{x_iz}]$ est l'image d'un triangle $[\overrightarrow{x_iz}, -\pi'_-(t_i),  \eta_i, \overrightarrow{x_iz}]$ dans $\Sigma_{x_i}(\mathcal I)$ par $\rho_{\overrightarrow{Ax_i}, \vec a_{x_i}}$,  o\`u $\vec a_{x_i}$ est la chambre de $\overrightarrow{Ax_i}$ qui contient tous les vecteurs $\overrightarrow{x_iz'}$, $z'\in a$.

\end{prop}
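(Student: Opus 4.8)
Il s'agit d'une proposition de passage « du local au global » : on veut reconnaître si un polygone donné dans un appartement $A$ est la rétraction (de centre une alcôve $a \ni z$) d'un triangle géodésique dont deux côtés restent $[z,x]$ et $[z,\tilde y]$, le troisième côté étant plié pour devenir la ligne brisée $[x,x_1,\dots,x_n,y]$. Le critère affirme que ceci se teste point par point aux sommets anguleux $x_i$, en examinant dans l'immeuble tangent $\Sigma_{x_i}(\mathcal I)$ le « petit triangle » formé des directions $\overrightarrow{x_iz}$, $-\pi'_-(t_i)$ et $\pi'_+(t_i)$. C'est un énoncé de recollement : une condition globale (être l'image d'un triangle) équivaut à une famille de conditions locales (en chaque sommet).

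**Plan de preuve.** Je prouverais d'abord le sens direct, qui est essentiellement une conséquence fonctorielle de la propriété de la différentielle rappelée au paragraphe précédent. Si $[z,x,x_1,\dots,x_n,y,z] = \rho([z,x,\tilde y,z])$, alors en chaque $x_i$ on applique $d\rho_{x_i}$. D'après la propriété citée, lorsque $x_i \in A$ on a $d\rho_{x_i} = \rho_{\overrightarrow{Ax_i},\vec a_{x_i}}$, où $\vec a_{x_i}$ est précisément la chambre contenant les $\overrightarrow{x_iz'}$ pour $z' \in a$. La différentielle envoie les dérivées du triangle source sur celles du polygone image ; comme $\tilde y$ se relève au-dessus de $y$ et que le côté $[x_i,z]$ est commun (donc $\overrightarrow{x_iz}$ est fixé), on obtient exactement le triangle tangent $[\overrightarrow{x_iz}, -\pi'_-(t_i), \eta_i, \overrightarrow{x_iz}]$ avec $\eta_i$ la direction sortante relevée, et $\rho_{\overrightarrow{Ax_i},\vec a_{x_i}}(\eta_i) = \pi'_+(t_i) = 0_{x_i}$-basé. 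Le point à soigner ici est de vérifier que $\overrightarrow{x_iz}$ est bien fixé par $d\rho_{x_i}$, ce qui vient de ce que le côté $[x_i,z]$ appartient à l'appartement et pointe vers le centre $a$ de la rétraction.

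**La réciproque — l'étape difficile.** C'est le cœur de la proposition et là où réside l'obstacle principal. Il faut reconstruire globalement le triangle $[z,x,\tilde y,z]$ à partir des données locales. Je procéderais par récurrence sur $i$, en relevant le côté plié $[x,x_1,\dots,x_n,y]$ segment par segment pour en faire un chemin géodésique $[x = \tilde x_0, \tilde x_1, \dots, \tilde x_n = \tilde y]$ dans $\mathcal I$ dont l'image par $\rho$ est le chemin $\pi$. À l'étape $i$, disposant déjà d'un segment relevé arrivant en $\tilde x_i$ avec la bonne dérivée entrante, l'hypothèse locale fournit dans $\Sigma_{\tilde x_i}(\mathcal I)$ un vecteur $\eta_i$ opposé (donc alignant les dérivées entrante et sortante pour assurer la géodésicité) et tel que $d\rho_{\tilde x_i}$ l'envoie sur $\pi'_+(t_i)$. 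On prolonge alors le relèvement par le segment de direction $\eta_i$ jusqu'à $\tilde x_{i+1}$. La difficulté technique est double : d'une part s'assurer que le recollement de ces segments locaux fournit un chemin qui reste géodésique de bout en bout (et non seulement localement dérivable) — ce qui repose sur le fait que les données d'opposition en chaque sommet, combinées au caractère géodésique de $[x,z]$ et $[y,z]$, forcent l'alignement global, via l'unicité des appartements contenant une alcôve donnée ; d'autre part vérifier que $\rho$ appliquée à ce chemin redonne bien le polygone de départ, ce qui se contrôle en comparant les dérivées $d\rho_{\tilde x_i}$ à chaque sommet. J'utiliserais de façon essentielle l'axiome fondamental des immeubles (deux alcôves sont dans un même appartement) et la propriété que $\rho$ conserve les types et diminue les distances pour garantir que l'objet reconstruit est bien un triangle (ses deux côtés extrêmes $[\tilde x,z]$ et $[\tilde y, z]$ étant des géodésiques de longueurs prescrites, obtenues parce que $x$ et $\tilde y$ se relèvent au-dessus de $x$ et $y$ avec $z$ fixé). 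Le dernier ajustement consiste à fermer le triangle en $z$ : comme $z \in a$ est fixé par $\rho$ et que les côtés $[z,x]$ et $[z,\tilde y]$ ne subissent aucun pli (seul le côté opposé à $z$ est plié), ces deux côtés se relèvent isométriquement et le triangle $[z, x, \tilde y, z]$ est complet.
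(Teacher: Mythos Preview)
Votre plan a la bonne architecture (diff\'erentier pour le sens direct, relever segment par segment pour la r\'eciproque), mais il contient une confusion r\'ecurrente entre $x_i$ et $\tilde x_i$ qui masque le point technique essentiel. Dans le sens direct, le segment relev\'e $[x,\tilde y]$ passe par des points $\tilde x_i=\tilde\pi(t_i)$ qui, d\`es que $i\geq1$, ne sont g\'en\'eralement \emph{pas} dans $A$~; la diff\'erentielle \`a calculer est donc $d\rho_{\tilde x_i}:\Sigma_{\tilde x_i}(\mathcal I)\to\overrightarrow{Ax_i}$, et non $d\rho_{x_i}$. Or la formule $d\rho_x=\rho_{\overrightarrow{Ax},\vec a_x}$ que vous invoquez n'est valable que pour $x\in A$. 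Plus grave~: $d\rho_{\tilde x_i}(\tilde\pi'_+(t_i))$ tombe dans $\overrightarrow{Ax_i}$ et vaut simplement $\pi'_+(t_i)$~; ce n'est pas le $\eta_i\in\Sigma_{x_i}(\mathcal I)$ cherch\'e, qui doit \^etre oppos\'e \`a $-\pi'_-(t_i)$ et se projeter sur $\pi'_+(t_i)$. Il manque un \emph{transport} de $\Sigma_{\tilde x_i}(\mathcal I)$ vers $\Sigma_{x_i}(\mathcal I)$ par un isomorphisme qui envoie $-\tilde\pi'_-(t_i)$ sur $-\pi'_-(t_i)$~; c'est l'image par ce transport de $\tilde\pi'_+(t_i)$ qui fournit $\eta_i$.

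La m\^eme lacune affecte votre r\'eciproque. \`A l'\'etape~$1$ tout va bien car $\tilde x_1=x_1\in A$, et l'hypoth\`ese donne directement $\eta_1\in\Sigma_{x_1}(\mathcal I)$ pour construire $\tilde x_2$. Mais d\`es l'\'etape~$2$, vous \'ecrivez que l'hypoth\`ese locale fournit un vecteur dans $\Sigma_{\tilde x_i}(\mathcal I)$~: c'est faux, elle fournit $\eta_i\in\Sigma_{x_i}(\mathcal I)$ avec $x_i\neq\tilde x_i$ en g\'en\'eral. Il faut donc, \`a chaque \'etape, transporter $\eta_i$ de $\Sigma_{x_i}$ \`a $\Sigma_{\tilde x_i}$ par un isomorphisme compatible avec la r\'etraction. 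Le papier r\'esout ce point en supposant l'existence d'un groupe $\mathbf G$ fortement transitif sur $\mathcal I$~: on choisit $g_i\in\mathbf G$ fixant l'alc\^ove $a$ et envoyant un appartement contenant $a$ et $\tilde x_i$ sur $A$~; alors $dg_i$ r\'ealise le transport voulu entre les immeubles tangents, et l'on pose $\tilde x_{i+1}=\tilde x_i+\lambda_i\,dg_i^{-1}(\eta_i)$. Sans cette id\'ee (ou un substitut purement immobilier, par exemple via des isomorphismes d'appartements fixant $a$), votre r\'ecurrence ne d\'emarre pas au-del\`a du premier cran.
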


\begin{rem*}
La condition est \'equivalente au fait que pour tout $i$, il existe $\eta_i\in\Sigma_{x_i}(\mathcal I)$ tel que $\eta_i$ est oppos\'e \`a $-\pi'_- = -\pi'_-(t_i)$ et $\rho(\eta_i)=\pi'_+(t_i)$.
\end{rem*}

\begin{proof} On suppose pour simplifier qu'il existe un groupe $\mathbf G$ agissant sur $\mathcal I$ fortement transitivement (c'est \`a dire transitivement sur les paires form\'ees d'une alc\^ove dans un appartement). Cela nous suffit, car nous appliquerons tout ceci \`a la situation o\`u $\mathcal I = \mathcal I(G,\mathscr K)$, l'immeuble de Bruhat-Tits associ\'e au groupe $G$ sur le corps $\mathscr K$. Apr\`es avoir d\'emontr\'e la proposition \ref{prPliDep}, on peut voir que cette hypoth\`ese est inutile.

\medskip
\noindent{\it Pliage. } On suppose que $[z,x,x_1,...,x_n,y,z]$ est l'image par $\rho$ d'un triangle $[z,x,\tilde y,z]$. Soit $\tilde\pi : [0,1]\to \mathcal I$ une param\'etrisation de $[x,\tilde y]$ telle que $\rho \circ\tilde\pi = \pi$.  Pour tout $i$, posons $\tilde x_i = \tilde\pi(t_i)$ de sorte que $x_i = \rho (\tilde x_i)$. On d\'erive en $t_i$ :
$$
d\rho_{\tilde x_i} (\pm \tilde\pi'_\pm (t_i)) = \pm \pi'_\pm(t_i),\quad d\rho_{\tilde x_i}(\overrightarrow{\tilde x_iz}) = \overrightarrow{x_iz}.
$$
Soit $Z$ un appartement contenant $a, \tilde x_i$ et tel que $-\tilde\pi'_-(t_i)\in\overrightarrow{Z\tilde x_i}$. Soit $g\in\mathbf G$ tel que $g\cdot Z = A$ et $g\cdot a = a$. Alors, $\rho\vert_{ Z} = g\vert_{Z}$. On prend $\eta_i = dg_{\tilde x_i}(\tilde\pi'_+(t_i))$ ; $dg_{\tilde x_i}$ est un isomorphisme entre $\Sigma_{\tilde x_i}\mathcal I$ et $\Sigma_{x_i}\mathcal I$, d'o\`u $\eta_i$ est oppos\'e \`a $-\pi'_-(t_i)$. Il reste \`a montrer que $\rho_{\overrightarrow{Ax_i}, \vec a_{x_i}} (\eta_i) = \pi'_+(t_i)$.

Soit $Y$ un appartement contenant $a,\tilde x_i$ et tel que $\tilde\pi'_+(t_i)\in\overrightarrow{Y\tilde x_i}$. On a des isomorphismes:
$$
\begin{CD}
Y @>g\cdot>a> gY @>\varphi, \ \simeq>a> A\ ,
\end{CD}
$$
\qquad\qquad(cette notation signifie que $g$ et $\varphi$ fixent $a$)

L'isomorphisme $\varphi$ est \'egal \`a $\rho\vert_{ gY}$ et la compos\'ee  \`a $\rho\vert_{ Y}$. On d\'erive :
$$
\begin{CD}
\overrightarrow{Y\tilde x_i} @>dg>> \overrightarrow{gY\tilde x_i} @>d\varphi>> \overrightarrow{A x_i}\ ,
\end{CD}
$$

la compos\'ee est \'egale \`a $d\rho_{\tilde x_i}$ et $\tilde\pi'_+(t_i)\mapsto \eta_i\mapsto d\rho_{\tilde x_i}(\tilde\pi'_+(t_i)) = \pi'_+(t_i)$. Or $d\varphi_{\tilde x_i} : \overrightarrow{gY\tilde x_i} \longrightarrow \overrightarrow{Ax_i}$ est aussi \'egal \`a $\rho_{\overrightarrow{Ax_i}, \vec a_{x_i}}$, donc c'est gagn\'e !

\medskip
\noindent{\it D\'epliage. }  On suppose qu'il existe, pour tout $i$, $\eta_i\in\Sigma_{x_i}\mathcal I$ satisfaisant aux conditions de l'\'enonc\'e. Soit $B_1$ un appartement contenant $a\cup\{x_1\}$ tel que $\eta_1\in \overrightarrow{B_1x_1}$. On pose $\tilde x_2 = x_1 +\lambda\eta_1\in B_1$, o\`u $\lambda$ est tel que $\lambda\Vert \eta_1\Vert = \Vert \overrightarrow{x_1x_2}\Vert$. Notons $\varphi_1$ l'isomorphisme $\begin{CD} B_1@>\simeq>a> A\end{CD}$. Alors $(d\varphi_1)_{x_1}(\eta_1) = \pi'_+(t_1)$ car $(d\varphi_1)_{x_1} = (\rho_{\overrightarrow{A}, \vec a})\vert_{\overrightarrow{B_1}}$. Donc $\varphi_1(x_1 + \lambda\eta_1) = x_1 + d\varphi_1(\lambda\eta_1) = x_1 + \lambda\pi'_+(t_1) = x_2$. Et $\overrightarrow{x_1x_0}$ est oppos\'e \`a $\eta_1$ dans $\Sigma_{x_1}\mathcal I$, donc $[x,x_1,\tilde x_2]$ est un segment dans $\mathcal I$.

Soit $g_1\in\mathbf G$ tel que $g_1\cdot A = B_1$ et $g_1\cdot a =a$. On pose $\tilde x_3 = \tilde x_2 +\lambda_2dg_1(\eta_2)$ dans un appartement $B_2$... On construit de la sorte $\tilde x_3,...,\tilde x_n, \tilde y$ pour fabriquer un triangle.
\end{proof}

\subsection{Pliage}\label{ssePliage}

Soit $\mathcal S$ un immeuble sph\'erique de groupe de Weyl $W^v$; on le consid\`ere dans sa r\'ealisation vectorielle, comme $\Sigma_x(\mathcal I)$ en \ref{ssseTang}. Soient $\xi\in\mathcal S$, $\eta$ oppos\'e \`a $\xi$, $-C$ une chambre et $A$ un appartement contenant $\xi$ et $-C$. On note $\rho = \rho_{A,-C}$ la r\'etraction sur $A$ centr\'ee en $-C$. Le but de cette partie est de trouver une relation entre $\xi$, $-C$ et $\rho(\eta)$ (HR(0) ci-dessous).

\medskip
\noindent{\bf Rappel. } La notion de $W^v-$cha\^{\i}ne (cf. \ref{sseCheKM}) d\'epend du choix de la chambre de Weyl $C^v$. On la reprend ici en explicitant la chambre dans le nom:

Une $(W^v,-C)-$cha\^{\i}ne ou  $(-C)-$cha\^{\i}ne de $op_A(\xi)$ \`a $\rho(\eta)$ est une suite
$$
op_A(\xi), \tau_1 op_A(\xi), \tau_2\tau_1 op_A(\xi),...., \rho(\eta) = \tau_n\cdots \tau_2\tau_1 op_A(\xi),
$$ o\`u chaque $\tau_i$ est une r\'eflexion de $A$ qui \'eloigne $\tau_{i-1}\cdots \tau_2\tau_1 op_A(\xi)$ de $-C$, c'est-\`a-dire :
$$
\tau_{i-1}\cdots \tau_2\tau_1 op_A(\xi),\ -C\quad \mid_{M_i} \quad\tau_i\tau_{i-1}\cdots \tau_2\tau_1 op_A(\xi),
$$ o\`u $M_i$ est le mur associ\'e \`a la r\'eflexion $\tau_i$.

(Cette notation signifie que les termes de gauche sont d'un c\^ot\'e du mur $M_i$ et le terme de droite de l'autre c\^ot\'e.)

\begin{dfn}
Soit $\Gamma$ une galerie minimale de $-C$ \`a $\rho(\eta)$,  $\Gamma = (-C = C_0, C_1,...,C_n\ni \rho(\eta))$. Soit $\tau_i$ la r\'eflexion qui \'echange $C_{i-1}$ et $C_i$. Une cha\^{\i}ne de longueur $\ell$ le long de $\Gamma$ de $op_A(\xi)$ \`a $\rho(\eta)$ est une $(-C)-$cha\^{\i}ne de $op_A(\xi)$ \`a $\rho(\eta)$ du type $\rho(\eta) = \tau_{i_\ell}\cdots \tau_{i_1} op_A(\xi)$ avec $i_1<\cdots <i_\ell$.
\end{dfn}

Soit maintenant une galerie minimale $\Gamma = (C_0 = -C, C_1,..., C_n)$ de $-C$ \`a $\eta$. Pour tout $i$, soit $B_i$ un appartement contenant $\xi$ et $C_i$. Notons $\rho_i  = \rho_{B_i, C_i}$ la r\'etraction centr\'ee en $C_i$ sur $B_i$. Bien \'evidemment, on prend $B_0 = A$ et $\rho_0 = \rho$. De l'autre c\^ot\'e, $B_n$ contient $\xi$ et $\eta$, et $\eta = op_{B_n} \xi$.

\begin{lem}
\label{leRetrac}
Pour $i<j$, on a $(\rho_i)\vert_{ \Gamma_{\geqslant j}} = \rho_i \circ (\rho_j)\vert_{ \Gamma_{\geqslant j}}$.
\end{lem}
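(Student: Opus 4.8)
The idea is to collapse everything into a single apartment and reduce the statement to a uniqueness property of minimal galleries. First I would choose an apartment $F$ containing the two endpoint chambers $C_i$ and $C_n$ of the gallery; since a minimal gallery lies in every apartment containing its extremities, $F$ then contains all of $\Gamma_{\geq i}$, in particular $C_j$ and the whole tail $\Gamma_{\geq j}$. Because a retraction centred at a chamber restricts, on any apartment containing that chamber, to the defining isomorphism onto the target apartment, I may write $\rho_i|_F=\alpha$ and $\rho_j|_F=\beta$, where $\alpha\colon F\to B_i$ fixes $F\cap B_i\supseteq C_i$ and $\beta\colon F\to B_j$ fixes $F\cap B_j\supseteq C_j$ (pointwise). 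Thus for a point $x$ in a chamber $C_k$ with $k\geq j$ one has $\rho_i(x)=\alpha(x)$ and $\rho_j(x)=\beta(x)$, and the lemma amounts to the pointwise identity $\rho_i(\beta(x))=\alpha(x)$ for all such $x$.

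\textbf{Core step.} To compute $\rho_i(\beta(x))$ I would form the concatenated gallery
$$
G_2=(C_i,\dots,C_j,\beta(C_{j+1}),\dots,\beta(C_k)),
$$
gluing the initial segment $(C_i,\dots,C_j)$ of $\Gamma$ to the $\beta$-image of its tail. Since $\beta$ fixes $C_j$ pointwise this is a genuine gallery, and since $\beta$ preserves types, $G_2$ carries exactly the same type word as the minimal gallery $(C_i,\dots,C_k)$, which is reduced. A gallery with reduced type word is minimal; hence $G_2$ is a minimal gallery from $C_i$ to $\beta(C_k)$, and therefore lies in some apartment $D$ containing its two endpoints. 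On $D$ the retraction $\rho_i$ again acts as the isomorphism onto $B_i$ fixing $D\cap B_i\ni C_i$, so $\rho_i(G_2)$ is a minimal gallery of $B_i$ issued from $C_i$. Likewise $\rho_i$ applied to $G_1=(C_i,\dots,C_k)\subset F$ gives $\alpha(G_1)$, a minimal gallery of $B_i$ from $C_i$ of the same type. The two images share the same reduced type and the same starting chamber, and they even agree along the common prefix up to $\rho_i(C_j)$ (which is unambiguous, $C_j$ lying in both $F$ and $D$). By uniqueness of a minimal gallery of given type issued from a given chamber in the apartment $B_i$, the galleries $\rho_i(G_1)$ and $\rho_i(G_2)$ coincide, chamber by chamber; in particular their last chambers agree: $\rho_i(\beta(C_k))=\alpha(C_k)$.

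\textbf{From chambers to points, and the obstacle.} It then remains to promote this equality of chambers to an equality of points. Writing $C_k'=\alpha(C_k)=\rho_i(\beta(C_k))$, both $\alpha|_{\overline{C_k}}$ and $(\rho_i|_D)\circ\beta$ restricted to $\overline{C_k}$ are type-preserving isomorphisms from $\overline{C_k}$ onto $\overline{C_k'}$; since such an isomorphism on the closure of a single chamber is unique, they coincide, giving $\rho_i(\beta(x))=\alpha(x)$ for every $x\in\overline{C_k}$. Ranging over $k\geq j$ covers $\Gamma_{\geq j}=\bigcup_{k\geq j}\overline{C_k}$ and yields $\rho_i=\rho_i\circ\rho_j$ there. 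I expect the genuine difficulty to be concentrated in the core step: one must be sure that folding the tail by $\rho_j$ does not shorten the combinatorial distance to $C_i$, i.e. that the concatenation $G_2$ does not backtrack at $C_j$ and stays minimal. This is exactly what the reduced–type–word criterion delivers (the junction is harmless because consecutive letters of a reduced word differ), and it is the only place where minimality of $\Gamma$ is really used; the remaining arguments are the standard descent properties of retractions recalled above.
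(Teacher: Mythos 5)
Your proof is correct and follows essentially the same route as the paper's: your gallery $G_2$ is exactly the paper's partially folded gallery $\theta=(C_0,\dots,C_j,\rho_j(C_{j+1}),\dots)$ restricted to the indices $[i,k]$, and both arguments hinge on showing that this folded gallery is still minimal, so that it lies in an apartment on which $\rho_i$ restricts to an isomorphism. The only (cosmetic) differences are that the paper deduces the minimality of $\theta$ from that of $\rho_j(\theta)=\rho_j(\Gamma)$ together with the fact that $\rho_j$ decreases distances, whereas you invoke the reduced-type-word criterion — for which, note, the non-stuttering of $G_2$ at the junction follows from the injectivity of $\beta$ rather than from the reducedness of the word — and that you conclude via uniqueness of the non-stuttering gallery of a given type issued from $C_i$ in $B_i$, rather than via the paper's chain of apartment isomorphisms $Z\to Y\to Y'$.
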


\begin{proof} La galerie $\theta = (C_0,...,C_j=\rho_j(C_j),\rho_{j}(C_{j+1}),...,\rho_j(C_n))$ est tendue. En effet, $\rho_j(\theta) = \rho_j(\Gamma)$ est tendue car $\rho_j$ est une r\'etraction centr\'ee en une chambre de $\Gamma$. Ainsi, $\theta$ est tendue car $\rho_j$ r\'eduit les distances.

Soit $Y$ un appartement contenant $\theta$. De m\^eme, soit $Y'$ un appartement contenant $(C_0,...,C_i =\rho_i(C_i),\rho_{i+1}(C_{i+1}),...,\rho_i(C_n))$. Et enfin, soit $Z$ un appartement contenant $\Gamma$. On a :
$$
\begin{CD}
Z @>\varphi_j,\simeq> \Gamma_{\leqslant j}> Y @>\varphi_i,\simeq> \Gamma_{\leqslant i}> Y' \ .
\end{CD}
$$ Ce qui donne $\varphi_j(\Gamma_{\geqslant j}) = \rho_j  (\Gamma_{\geqslant j})$ et $(\varphi_j)\vert_{\Gamma_{\geqslant j}} = (\rho_j)\vert_{\Gamma_{\geqslant j}}$. De m\^eme, $\varphi_i\circ (\varphi_j)\vert_{\Gamma_{\geqslant i}} = (\rho_i)\vert_{\Gamma_{\geqslant i}}$. Et enfin, $(\varphi_i)\vert_{\theta_{\geqslant i}} = (\rho_i)\vert_{\theta_{\geqslant i}}$. D'o\`u, $(\rho_i)\vert_{ \Gamma_{\geqslant j}} = \rho_i \circ (\rho_j)\vert_{ \Gamma_{\geqslant j}}$.
\end{proof}

Dans les conditions de ce num\'ero \ref{ssePliage}, on veut montrer les r\'esultats suivants:

\bigskip
\noindent{\bf HR(i). } Il existe une cha\^{\i}ne le long de $\rho_i( \Gamma_{\geqslant i})$ (c'est donc une $C_i-$cha\^{\i}ne) de $op_{B_i}\xi$ \`a $\rho_i(\eta)$.

\medskip
L'hypoth\`ese HR(n) est trivialement vraie. On suppose HR(i+1), montrons HR(i). Il y a deux cas.

\medskip
\noindent{\it Cas 1. } Il existe un appartement $Y$ contenant $C_i$, $\xi$ et le demi-appartement  $D_{B_{i+1}}(m_i,C_{i+1})$ de  $B_{i+1}$ contenant $C_{i+1}$ et dont le mur $M_i$ contient la cloison $m_i = C_i\cap C_{i+1}$. (C'est le cas si $\xi\in  D_{B_{i+1}}(m_i,C_{i+1})$ ou si $C_i\in B_{i+1}$ et dans ce cas $Y=B_{i+1}$ convient.)

Comme $(C_i, C_{i+1},\rho_{i+1}(C_{i+2}),...,\rho_{i+1}(C_{n}))$ est une galerie tendue, elle ne coupe le mur $M_i$  qu'une seule fois. Donc, $(C_{i+1},\rho_{i+1}(C_{i+2}),...,\rho_{i+1}(C_{n}))\subset D_{B_{i+1}}(m_i,C_{i+1})$. Et il existe des isomorphismes
$$
\begin{CD}
B_{i+1} @>\simeq> \xi,\rho_{i+1}(\Gamma_{\geqslant i+1})> Y @>(\rho_i)_{\mid Y},\simeq>\xi, C_i> B_i \ ;
\end{CD}
$$ par le Lemme \ref{leRetrac}, $\rho_i $ transforme $\rho_{i+1}(\Gamma_{\geqslant i+1})$ en $ \rho_{i}(\Gamma_{\geqslant i+1})$. On a la cha\^{\i}ne suivante dans $B_{i+1}$ le long de $\rho_{i+1}(\Gamma_{\geqslant i+1})$ :
$$
\rho_{i+1} (\eta) =\tau_{i_k}\cdots \tau_{i_1} op_{B_{i+1}} \xi \ .
$$ Par les deux isomorphismes pr\'ec\'edents, on a
$$
\rho_{i} (\eta) =\tau'_{i_k}\cdots \tau'_{i_1} op_{B_{i}} \xi \ ,
$$ o\`u $\tau'_{i_j}$ est la r\'eflexion dans $B_i$ selon la $i_j-$i\`eme cloison de $\rho_i(\Gamma)$. C'est bien une cha\^{\i}ne de $op_{B_i} \xi$ \`a $\rho_i(\eta)$ le long de $\rho_i(\Gamma_{\geqslant i})$.

\medskip
\noindent{\it Cas 2. } La chambre $C_i$ n'est pas dans $B_{i+1}$ et, dans $B_{i+1}$, on a:
$$
\xi\quad \mid_{M_i} \quad (C_{i+1}, \rho_{i+1}(C_{i+2}),..., \rho_{i+1}(C_{n}))\ .
$$ L'intersection $B_i\cap B_{i+1}$ contient $\xi$ et $m_i$ et donc l'enclos $Cl(\xi,m_i)$. Or $\xi\not\in M_i$ donc $Cl(\xi,m_i)$ est de dimension maximale ; c'est l'adh\'erence de la r\'eunion des galeries minimales de $\xi$ \`a $m_i$.

Soit $d=proj_{m_i}(\xi)$, c'est une chambre, elle est adjacente \`a $C_i$ dans $B_i$ et \`a $C_{i+1}$ dans $B_{i+1}$. Autrement dit, $d = \sigma_i (C_{i+1}) = \rho_i(C_{i+1})$ avec $\sigma_i$ la r\'eflexion selon $M_i$ dans $B_{i+1}$. Soit $Y$ un appartement contenant $D_{B_{i+1}}(m_i, C_{i+1})\cup C_i$. On a les isomorphisme suivants :
$$
\begin{CD}
B_{i+1} @>\simeq> \rho_{i+1}(\Gamma_{\geqslant i+1})> Y @>(\rho_i)\vert_{Y},\simeq>C_i> B_i \ ;
\end{CD}
$$ on note $\varphi$ la compos\'ee, elle envoie $ \rho_{i+1}(\Gamma_{\geqslant i+1})$ sur $ \rho_{i}(\Gamma_{\geqslant i+1})$. La cha\^{\i}ne dans $B_{i+1}$, $\rho_{i+1}(\eta) = \tau_{i_k}\cdots \tau_{i_1} (op_{B_{i+1}} \xi)$ devient $\rho_{i}(\eta) = \varphi\tau_{i_k}\varphi^{-1}\cdots \varphi\tau_{i_1}\varphi^{-1}\varphi (op_{B_{i+1}} \xi)$. Or $\tau'_{i_k} = \varphi\tau_{i_k}\varphi^{-1}$ est la r\'eflexion selon la $i_k-$i\`eme cloison de $\rho_i(\Gamma)$
et $\varphi (op_{B_{i+1}} \xi) = op_{B_{i}} \varphi(\xi)= op_{B_{i}} \sigma'_i(\xi)$, car $\varphi\vert_{ B_{i+1}\cap B_i} = (\sigma'_i)\vert_{ B_{i+1}\cap B_i}$, o\`u $\sigma'_i$ est la r\'eflexion selon $m_i$ dans $B_{i}$. Ainsi,
$$
\begin{array}{rcl}
\rho_{i}(\eta) & = &  \tau'_{i_k}\cdots \tau'_{i_1} (op_{B_{i}} \sigma'_i(\xi))\\
 & = & \tau'_{i_k}\cdots \tau'_{i_1} \sigma'_i (op_{B_{i}} \xi)\\
\end{array}
$$ est une cha\^{\i}ne le long de $\rho_i(\Gamma_{\geqslant i})$ dans $B_i$.
\qed

\subsection{D\'epliage}\label{sseDepli}

Soit $\mathcal S$ un immeuble sph\'erique \'epais (dans sa r\'ealisation vectorielle et auquel on pense comme l'immeuble tangent \`a $\mathcal I$ en un point $\pi(t)$). Soient $\xi, \pi'_+$ deux points de $\mathcal S$ et $A$ un appartement les contenant, ainsi qu'une chambre, not\'ee $-C$. Comme pr\'ec\'edemment, $\rho$ d\'esigne la r\'etraction sur $A$ de centre $-C$.

On suppose qu'il existe $\Gamma = (C_0 = -C,C_1,...,C_n)$ une galerie minimale de $-C$ \`a $\pi'_+$ et une cha\^{\i}ne le long de $\Gamma$ de $op_A\xi$ \`a $\pi'_+$ de longueur $l$.

\begin{prop}
\label{prDepli}
Il existe $\eta\in\mathcal S$ tel que $\rho(\eta) = \pi'_+$ et $\eta$ est oppos\'e \`a  $\xi$.
\end{prop}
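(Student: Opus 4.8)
The plan is to recognise this proposition as the exact converse of the statement \textbf{HR(0)} established in the preceding ``Pliage'' subsection: there one started from a point $\eta$ opposite $\xi$ and produced, by folding $\Gamma$ onto $A$ via the retractions $\rho_i$, a chain along $\rho(\Gamma)$ from $op_A\xi$ to $\rho(\eta)$; here we are handed such a chain and must reconstruct $\eta$. Accordingly I would prove it by reversing that induction, \emph{unfolding} the flat gallery $\Gamma\subset A$ into a genuine minimal gallery $\Delta=(D_0=-C,D_1,\dots,D_n)$ of the thick building $\mathcal S$ with $\rho(D_j)=C_j$, and setting $\eta=op_{B_n}\xi$ for a suitable apartment $B_n\ni\xi,D_n$. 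The thickness of $\mathcal S$ is what provides room to branch away from $A$, while the chain is exactly the data dictating when to branch.

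Concretely I would argue by induction on the length $n$ of $\Gamma$ (one may equally induct on the chain length $l$, treating each crossing carrying no chain-reflection as a flat continuation). For $n=0$ the chain has length $0$, so $\pi'_+=op_A\xi$; then $\eta:=\pi'_+\in A$ is opposite $\xi$ and is fixed by $\rho$, so we are done. For the inductive step I look at the first panel $m_0=C_0\cap C_1$, its wall $M_0$ and the reflection $\tau_1$ across $M_0$, and I distinguish two cases according to whether $m_0$ carries a chain-reflection, i.e.\ whether $1\in\{i_1,\dots,i_l\}$; this is precisely the Case~1 / Case~2 dichotomy of the folding argument read backwards. In the branching case ($1=i_1$, which forces $\xi$ and $-C$ onto opposite sides of $M_0$, so that $op_A\xi$ lies on the $-C$-side and $\tau_1$ legitimately opens the chain) I use the thickness of $\mathcal S$ to choose a chamber $D_1\ni m_0$ with $D_1\neq C_0,C_1$, together with an apartment $B_1\supset\{\xi,D_1\}$; since $D_1$ lies beyond $m_0$ from $-C$, one has $\rho(D_1)=C_1$. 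In the flat case ($1\notin\{i_j\}$) no branching is needed and I continue inside an apartment already containing the relevant half-apartment, leaving the chain unchanged.

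Having branched, I would transport the remaining data into $B_1$: by Lemma \ref{leRetrac} and the apartment isomorphism fixing the shared half-apartment $D_{B_1}(m_0,D_1)$, the tail $(C_1,\dots,C_n)$ and the truncated chain (its first reflection $\tau_1$ deleted) become, inside $B_1$, a minimal gallery issued from $D_1$ and a chain along it from $op_{B_1}\xi$ to the transported target. This is exactly the hypothesis of the proposition for a gallery of length $n-1$, with $D_1$ playing the role of $-C$ and $\rho_{B_1,D_1}$ that of $\rho$; the induction hypothesis then yields a point $\eta$ opposite $\xi$ with $\rho_{B_1,D_1}(\eta)$ equal to the transported $\pi'_+$. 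Opposition $\eta\perp\xi$ is inherited directly from the recursion. It remains only to push $\eta$ forward by $\rho$ and recover $\pi'_+$ itself, which follows from the compatibility $\rho=\rho\circ(\rho_{B_1,D_1})$ along $\Delta$ (again Lemma \ref{leRetrac}) together with the identity $\rho(\eta)=\tau_1(\text{transported }\pi'_+)=\pi'_+$, mirroring the prepended reflection $\sigma'_i$ of Case~2.

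The main obstacle is the bookkeeping at the branch: one must check that the choice of $D_1$ makes the transported data \emph{genuinely} satisfy the hypotheses of the proposition — a minimal gallery together with a valid chain — and that the final retraction identity $\rho(\eta)=\pi'_+$ holds. The chain axioms, namely that each $\tau_i$ moves the running point strictly away from $-C$ (conditions \textbf{(H1)}, \textbf{(H2)} in the relevant chamber picture), are exactly what make the successive reflection choices mutually consistent and guarantee that the truncated chain is still a legal chain along the transported tail; the thickness of $\mathcal S$ is what ensures a legal $D_1\neq C_0,C_1$ exists at all. Once this compatibility is pinned down, the two cases dualize the two cases of the folding lemma verbatim, and the induction closes.
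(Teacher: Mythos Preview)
Your plan is sound and uses the same ingredients as the paper (thickness to branch, Lemma~\ref{leRetrac} to track retractions, apartment isomorphisms to transport the chain), but your inductive organisation differs from the paper's in a way worth noting.

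The paper does \emph{not} peel off one panel at a time and change the base chamber. Instead it keeps $-C$ and $\rho=\rho_{A,-C}$ fixed throughout and runs a forward induction on the number of chain reflections already ``consumed''. The invariant \textbf{HR}($i$) records: an apartment $B_i\ni\xi$, a point $\eta_i\in B_i$ with $\rho(\eta_i)=\pi'_+$ (this equality is part of the invariant, not something recovered at the end), a minimal gallery $\Gamma_i$ from $-C$ to $\eta_i$ lying in $B_i$ past some rank $k_i$ and satisfying $\rho(\Gamma_i)=\Gamma$, and a chain of length $\ell-i$ along $(\Gamma_i)_{\geq k_i}$ from $op_{B_i}\xi$ to $\eta_i$. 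One passes from \textbf{HR}($i$) to \textbf{HR}($i{+}1$) by unfolding $\Gamma_i$ at the \emph{first remaining chain wall} $M_{i_1}$: thickness supplies a half-apartment $\mathcal D$ leaving $B_i$ along $M_{i_1}$, and two auxiliary apartments $Z=\mathcal D\cup D_{B_i}(m_{i_1},D_{k_i})$ and $B_{i+1}=\mathcal D\cup D_{B_i}(m_{i_1},\xi)$ (with the attendant isomorphisms $\varphi_Z:B_i\to Z$ and $\psi:Z\to B_{i+1}$) carry the gallery and the shortened chain forward. At $i=\ell$ the chain has length $0$, so $\eta_\ell=op_{B_\ell}\xi$ is opposite $\xi$ and already satisfies $\rho(\eta_\ell)=\pi'_+$.

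What this buys over your scheme: there is no flat/branching dichotomy (panels not carrying a chain reflection are simply skipped), the target equality $\rho(\eta)=\pi'_+$ is maintained as an invariant rather than reassembled at the end via a tower of Lemma~\ref{leRetrac} applications, and only one retraction $\rho$ is ever in play. Your version would work, but the step ``transport the tail and truncated chain into $B_1$ so that it starts at $D_1$ and at $op_{B_1}\xi$'' hides exactly the two-apartment manoeuvre $(\varphi_Z,\psi)$ the paper spells out; in particular the natural isomorphism $A\to B_1$ fixing $A\cap B_1$ sends $C_1$ to $C_1$ (since $proj_{m_0}(\xi)=C_1\in A\cap B_1$), not to $D_1$, so you really do need the intermediate apartment $Z$ to land the tail on $D_1$. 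That is the bookkeeping you flagged as the main obstacle, and it is precisely what the paper's proof makes explicit.
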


\begin{proof}  On raisonne par r\'ecurrence, la conclusion cherch\'ee est la condition HR($\ell$) ci-dessous.

\medskip
\noindent{\bf HR(i). } Il existe un appartement $B_i$ contenant $\xi$, il existe $\eta_i\in B_i$ tel que $\rho(\eta_i) = \pi'_+$, il existe une galerie minimale $\Gamma_i$ de $-C$ \`a $\eta_i$ qui est dans $B_i$ \`a partir d'un certain rang $k_i$ telle que $\rho(\Gamma_i) = \Gamma$, et il existe une cha\^{\i}ne le long de $(\Gamma_i)_{\geqslant k_i}$ de $op_{B_i} \xi$ \`a $\eta_i$ de longueur $l-i$.

\medskip
HR(0) est vraie pour $B_0 = A$. Si on a HR(i), notons $\Gamma_i = (-C=D_0,...,D_n)$ la galerie minimale et $\eta_i = \tau_{i_u}\cdots \tau_{i_1} op_{B_i} \xi$ la cha\^{\i}ne le long de $(\Gamma_i)_{\geqslant k_i}$ avec $\tau_{i_j}$ la r\'eflexion selon le mur $M_{i_j}$ contenant la $i_j-$i\`eme cloison $m_{i_j}$ de $\Gamma_i$ et $u=\ell-i$. On a
$$
op_{B_i} \xi, D_{k_i},D_{i_1}\quad \mid_{M_{i_1}}\quad \xi,\eta_i, (D_{i_1+1},..., D_n)\ .
$$
Soit $\mathcal D$ un demi-appartement sortant de $B_i$ le long du mur $M_{i_1}$, il existe car $\mathcal S$ est \'epais. On pose $B_{i+1} = \mathcal D\cup D_{B_i}(m_{i_1},\xi)$, $Z = \mathcal D\cup D_{B_i}(m_{i_1}, D_{k_i})$ et on note $\varphi_Z : B_i\to Z$ l'isomorphisme fixant $D_{B_i}(m_{i_1}, D_{k_i})$. On prend
$$
\Gamma_{i+1} = (D_0,...,D_{i_1},\varphi_Z(D_{i_1+1}),..., \varphi_Z(D_n))
$$
et $\eta_{i+1} = \varphi_Z(\eta_i)$. Remarquons que $\Gamma_{i+1} = (D_0,...,D_{k_i-1},\varphi_Z(D_{k_i}),..., \varphi_Z(D_n))$. Par hypoth\`ese de r\'ecurrence $\Gamma_i$ est tendue, du coup $\rho_{B_i,D_{i_1}}(\Gamma_{i+1}) = \rho_{B_i,D_{i_1}}(\Gamma_{i})$ l'est aussi, et donc de m\^eme pour $\Gamma_{i+1}$. Par le Lemme \ref{leRetrac}, on a $\rho\vert_{ \Gamma_{i+1,\geq i_1}} = \rho\circ (\rho_{B_i,D_{i_1}})\vert_ { \Gamma_{i+1,\geq i_1}}$. D'o\`u $\rho({ \Gamma_{i+1,\geq i_1}}) = \rho(\Gamma_{i,\geq i_1}) = \Gamma_{\geq i_1}$ et $\rho(\eta_{i+1}) = \rho(\eta_i) = \pi'_+$.

Par $\varphi_Z$ la cha\^{\i}ne de l'hypoth\`ese de r\'ecurrence devient $\eta_{i+1} = \varphi_Z(\eta_i) = \tau'_{i_u}\cdots \tau'_{i_2} \varphi_Z\tau_{i_1} op_{B_i} \xi$, avec $\tau'_{i_j}$ la r\'eflexion selon la $i_j-$i\`eme cloison de $\Gamma_{i+1}$ dans $Z$. On a $\eta_{i+1} = \tau'_{i_u}\cdots \tau'_{i_1} op_{Z} \varphi_Z(\xi)$. Notons $\psi : Z\to B_{i+1}$ l'isomorphisme qui fixe le demi-appartement $\mathcal D$. En composant par $\psi$, on obtient $\eta_{i+1} = \tau''_{i_u}\cdots \tau''_{i_1} op_{B_{i+1}} \psi\circ\varphi_Z(\xi)$, avec $\tau''_{i_j}=\psi\tau'_{i_j}\psi^{-1}$. Or $op_{B_{i+1}}(\psi\circ\varphi_Z (\xi)) = \tau''_{i_1}op_{B_{i+1}}(\xi)$. Donc $\eta_{i+1} = \tau''_{i_u}\cdots \tau''_{i_2} op_{B_{i+1}}(\xi)$ et c'est encore une cha\^{\i}ne le long de $(\Gamma_{i+1})_{\geq i_1+1}$. En effet, dans $B_i$, pour tout $j$, on avait
$$
\tau_{i_{j-1}}\cdots \tau_{i_1} op_{B_i} (\xi), D_{k_i},...,D_{i_{j}}\quad \mid_{M_{i_j}} \quad \tau_{i_j}\cdots \tau_{i_1} op_{B_i} (\xi), D_{i_{j+1}},...,D_{n}\ .
$$ Par $\psi\circ\varphi_Z$, on a,  pour $j\geq 2$, les positions suivantes dans $B_{i+1}$ (avec $M'_{i_j}=\psi\circ\varphi_Z(M_{i_j})$ ):
$$
\tau''_{i_{j-1}}\cdots \tau''_{i_2} op_{B_{i+1}} (\xi), \varphi_Z(D_{i_1+1}),...,\varphi_Z(D_{i_{j}})\quad \mid_{M'_{i_j}} \quad \tau''_{i_j}\cdots \tau''_{i_2} op_{B_{i+1}} (\xi), \varphi_Z(D_{i_{j+1}}),...,\varphi_Z(D_{n})\ .
$$
\end{proof}

D'apr\`es le paragraphe \ref{ssePliage} et la proposition \ref{prDepli}, on a montr\'e:

\begin{prop}
\label{prPliDep} Dans un appartement $A$ de l'immeuble $\mathcal S$, on consid\`ere des points $\xi$,  $\pi'_+,$ et une chambre $-C$; on note $\rho=\rho_{A,-C}$. Alors il existe dans $\mathcal S$ un point $\eta$ oppos\'e \`a $\xi$ tel que $\rho(\eta) = \pi'_+$ si, et seulement si, il existe une galerie minimale  $\Gamma$ de $-C$ \`a $\pi'_+$ et une cha\^{\i}ne le long de $\Gamma$ de $op_A\xi$ \`a $\pi'_+$.
\end{prop}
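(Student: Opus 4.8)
The plan is to obtain Proposition~\ref{prPliDep} by simply gluing together the two halves already established: the folding computation of subsection~\ref{ssePliage} for one implication, and Proposition~\ref{prDepli} for the other. No genuinely new argument is needed; the work lies in checking that the two setups match at their interface.

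First I would treat the direct implication, that the existence of an opposite $\eta$ forces the combinatorial data. Assume $\eta\in\mathcal S$ is opposite to $\xi$ with $\rho(\eta)=\pi'_+$. We are then exactly in the hypotheses of subsection~\ref{ssePliage}: the point $\xi$, its opposite $\eta$, the chamber $-C$, and an apartment $A\ni\xi,-C$ (note $\pi'_+=\rho(\eta)\in A$ automatically, since $\rho$ takes values in $A$, so $A$ contains all of $\xi,\pi'_+,-C$ as required). I would choose a minimal gallery $\Gamma'=(C_0=-C,C_1,\dots,C_n)$ from $-C$ to $\eta$ inside $\mathcal S$, a gallery which need not remain in $A$, and run the descending induction of~\ref{ssePliage}. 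The hypothesis HR(n) is trivial since $\eta=op_{B_n}\xi$, and each step from HR(i+1) to HR(i) was verified there (Cases 1 and 2, using Lemme~\ref{leRetrac}); hence HR(0) yields a chain along $\rho_0(\Gamma'_{\geq0})=\rho(\Gamma')$ from $op_A\xi$ to $\rho(\eta)=\pi'_+$. It then only remains to record that $\Gamma:=\rho(\Gamma')$ is a \emph{minimal} gallery from $-C$ to $\pi'_+$, which holds because $\rho=\rho_{A,-C}$ is centered at the chamber $-C$, the source of $\Gamma'$; such a retraction preserves the gallery-distance to its center (the same property invoked in the proof of Lemme~\ref{leRetrac}) and therefore sends $\Gamma'$ to a minimal gallery $\Gamma$ ending at $\pi'_+$. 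This produces the required pair consisting of a minimal gallery and a chain along it.

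For the converse implication I would invoke Proposition~\ref{prDepli} directly: given a minimal gallery $\Gamma$ from $-C$ to $\pi'_+$ together with a chain along $\Gamma$ from $op_A\xi$ to $\pi'_+$ of length $\ell$, the ascending induction of subsection~\ref{sseDepli} (base case HR(0) trivial with $B_0=A$, $\eta_0=\pi'_+$; inductive step using the thickness of $\mathcal S$ to push a half-apartment $\mathcal D$ out along the wall $M_{i_1}$) delivers an $\eta\in\mathcal S$ opposite to $\xi$ with $\rho(\eta)=\pi'_+$. Combining the two implications gives the stated equivalence.

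The point requiring the most care is the matching of setups in the direct implication: the folding argument of~\ref{ssePliage} supplies a gallery ending at the possibly non-apartment point $\eta$, whereas the statement demands a gallery ending at $\pi'_+\in A$. The bridge is exactly the observation above that a retraction centered at a chamber preserves minimality of galleries issued from that chamber, so that $\rho$ carries $\Gamma'$ to a minimal gallery $\Gamma$ ending at $\pi'_+$ and, by the very form of HR(0), the chain produced is indexed \emph{along} $\rho(\Gamma')=\Gamma$. Everything else is a faithful transcription of the two preceding subsections.
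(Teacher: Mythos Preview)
Your proposal is correct and matches the paper's approach exactly: the paper's proof consists of the single line ``D'apr\`es le paragraphe~\ref{ssePliage} et la proposition~\ref{prDepli}, on a'', and you have spelled out precisely how those two pieces fit together. Your observation that $\rho(\Gamma')$ is minimal because a retraction centered at $-C$ preserves gallery distance from $-C$ is the only bridging detail needed, and it is correct.
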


\subsection{Galeries pli\'ees positivement}

\medskip
On garde les m\^emes notations qu'en \ref{sseDepli} ci-dessus.

\begin{prop}
\label{prChaines}
Il existe une $-C-$cha\^{\i}ne de $op_A\xi$ \`a $\pi'_+$ si, et seulement si, il existe une cha\^{\i}ne de $op_A\xi$ \`a $\pi'_+$ le long d'une galerie minimale de $-C$ \`a $\pi'_+$.
\end{prop}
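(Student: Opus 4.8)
Je commencerais par le sens facile : par d\'efinition m\^eme, une cha\^{\i}ne le long d'une galerie minimale de $-C$ \`a $\pi'_+$ \emph{est} une $(-C)$-cha\^{\i}ne de $op_A\xi$ \`a $\pi'_+$, de sorte que son existence entra\^{\i}ne trivialement celle d'une $(-C)$-cha\^{\i}ne. Tout le contenu r\'eside dans la r\'eciproque, que je traiterais dans le cadre combinatoire de l'appartement vectoriel $A$ sous l'action du groupe de Weyl fini $W^v$. Les deux extr\'emit\'es $op_A\xi$ et $\pi'_+$ \'etant conjugu\'ees par le produit des r\'eflexions de la cha\^{\i}ne, elles appartiennent \`a une m\^eme orbite $W^v\lambda$, o\`u $\lambda$ d\'esigne son repr\'esentant dans l'adh\'erence de la chambre oppos\'ee \`a $-C$; je mesurerais longueurs et ordre de Bruhat relativement \`a la chambre de base $-C$.

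\'Etant donn\'ee une $(-C)$-cha\^{\i}ne $op_A\xi=\zeta_0,\ \zeta_1=\tau_1\zeta_0,\ \dots,\ \zeta_n=\tau_n\cdots\tau_1\zeta_0=\pi'_+$, chaque \'etape $\zeta_{i-1}\mapsto\zeta_i$ \'eloigne le point de $-C$ en franchissant le mur $M_i$. En notant $\lfloor p\rfloor\in W^v/W^v_\lambda$ le plus court repr\'esentant de la chambre la plus proche de $-C$ contenant $p$, c'est exactement une \'etape de r\'eflexion (\`a gauche) augmentant la distance \`a $-C$, du type de celles qui apparaissent dans la caract\'erisation par r\'eflexions de l'ordre de Bruhat-Chevalley rappel\'ee plus haut (c'est le m\^eme argument de longueurs que pour le crit\`ere de pliage positif, mais avec le signe oppos\'e puisqu'on s'\'eloigne de la chambre de base). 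Par transitivit\'e on obtient $\lfloor op_A\xi\rfloor\leq\lfloor\pi'_+\rfloor$ dans $W^v/W^v_\lambda$.

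J'appliquerais alors la caract\'erisation par galeries de cet ordre sur $W^v/W^v_\lambda$ (sa troisi\`eme caract\'erisation dans la proposition correspondante) : il existe une galerie minimale $\Gamma=(C_0=-C,C_1,\dots,C_N)$ de $-C$ \`a la chambre $\lfloor\pi'_+\rfloor$ contenant $\pi'_+$, qui se replie par pliages successifs sur une galerie aboutissant \`a $\lfloor op_A\xi\rfloor$. Si $\sigma_j$ d\'esigne la r\'eflexion selon le $j$-i\`eme mur $M_j$ franchi par $\Gamma$ et si $p_1<\cdots<p_k$ sont les niveaux de pliage, un calcul direct (replis pris de gauche \`a droite) montre que la derni\`ere chambre $C_N$ est envoy\'ee sur $\sigma_{p_1}\cdots\sigma_{p_k}C_N=\lfloor op_A\xi\rfloor$; au niveau des points cela s'\'ecrit $op_A\xi=\sigma_{p_1}\cdots\sigma_{p_k}\pi'_+$, soit $\pi'_+=\sigma_{p_k}\cdots\sigma_{p_1}op_A\xi$ avec $p_1<\cdots<p_k$, c'est-\`a-dire pr\'ecis\'ement la forme d'une cha\^{\i}ne le long de $\Gamma$.

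Le point d\'elicat, et \`a mon sens l'obstacle principal, est de v\'erifier que la suite des produits partiels $\sigma_{p_i}\cdots\sigma_{p_1}op_A\xi$ est \emph{effectivement} une $(-C)$-cha\^{\i}ne, i.e. qu'\`a chaque pas on franchit $M_{p_i}$ en s'\'eloignant de $-C$. C'est ici qu'intervient de fa\c{c}on essentielle la minimalit\'e de $\Gamma$ : comme celle-ci ne franchit chaque mur qu'une seule fois, $-C$ et $C_0,\dots,C_{p_i-1}$ sont d'un c\^ot\'e de $M_{p_i}$ et $C_{p_i},\dots,C_N$ de l'autre, et l'on montre par r\'ecurrence sur $i$ que le produit partiel am\`ene $op_A\xi$ dans l'adh\'erence de $C_{p_i}$, du bon c\^ot\'e du mur suivant. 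Ce d\'epliage interne \`a $A$ est l'exact analogue, dans un seul appartement, de la Proposition \ref{prDepli}; je m'en inspirerais directement. La seconde source de technicit\'e, que r\`egle pr\'ecis\'ement le recours \`a $W^v/W^v_\lambda$, est le passage aux repr\'esentants de longueur minimale lorsque $\pi'_+$ n'est pas r\'egulier, afin que la notion de <<galerie minimale de $-C$ \`a $\pi'_+$>> et les comparaisons ci-dessus gardent un sens.
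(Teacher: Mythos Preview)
Votre plan est essentiellement celui du papier jusqu'au point crucial : d\'eduire l'in\'egalit\'e de Bruhat $\lfloor op_A\xi\rfloor\leq\lfloor\pi'_+\rfloor$ de la $(-C)$-cha\^{\i}ne, en tirer une galerie $\delta$ du m\^eme type qu'une galerie minimale $\Gamma$ de $-C$ \`a $\pi'_+$ et aboutissant \`a $op_A\xi$, puis lire la cha\^{\i}ne candidate sur les indices de pliage. Vous identifiez aussi correctement que la difficult\'e est de v\'erifier qu'\`a chaque pas on s'\'eloigne bien de $-C$.

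C'est pr\'ecis\'ement l\`a que votre argument a une lacune. Votre affirmation de r\'ecurrence, \og le produit partiel am\`ene $op_A\xi$ dans l'adh\'erence de $C_{p_i}$\fg, n'est pas correcte : apr\`es d\'epliage complet l'extr\'emit\'e est $\pi'_+\in\overline{C_N}$ et non $\overline{C_{p_k}}$, et plus g\'en\'eralement le produit partiel $\sigma_{p_i}\cdots\sigma_{p_1}\,op_A\xi$ vit dans la derni\`ere chambre de la galerie \emph{partiellement d\'epli\'ee}, qui n'a aucune raison d'\^etre $C_{p_i}$. La minimalit\'e de $\Gamma$ localise bien $-C$ et $C_0,\dots,C_{p_i-1}$ d'un c\^ot\'e de $M_{p_i}$, mais ne dit rien sur la position de cette derni\`ere chambre --- et une galerie pli\'ee arbitraire du bon type peut tr\`es bien retraverser $M_{p_i}$ apr\`es le pli. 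L'analogie avec la Proposition~\ref{prDepli} est trompeuse : l\`a-bas on sort de l'appartement par \'epaisseur, ce qui d\'ecouple les deux moiti\'es ; ici tout se passe dans $A$ et il faut un contr\^ole suppl\'ementaire.

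L'id\'ee qui vous manque, et qui est le c\oe ur de la preuve du papier, est de ne pas prendre $\delta$ quelconque mais de la modifier pour qu'elle soit \emph{pli\'ee positivement par rapport \`a} $C_\xi=proj_\xi(-C)$ : \`a chaque pli, $C_\xi$ (donc $\xi$) est s\'epar\'e de la chambre pli\'ee par le mur de pliage. Comme $op_A\xi$ est oppos\'e \`a $\xi$, cela force $op_A\xi$ du c\^ot\'e de $-C$ \`a chaque \'etape, et la condition de $(-C)$-cha\^{\i}ne s'ensuit. La modification elle-m\^eme s'obtient par un red\'epliage/repliage explicite le long des murs mal orient\'es, en exploitant que $\delta$ aboutit \`a $op_A\xi$ oppos\'e \`a $\xi\in C_\xi$ ; c'est un argument court mais indispensable, que votre esquisse ne contient pas.
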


\begin{proof} Il suffit de montrer que l'existence d'une $-C-$cha\^{\i}ne de $op_A\xi$ \`a $\pi'_+$ implique celle d'une cha\^{\i}ne de $op_A\xi$ \`a $\pi'_+$ le long d'une galerie minimale de $-C$ \`a $\pi'_+$.

Pour cela, on dira qu'une galerie $\delta = (D_0,D_1,...,D_n)$ de type $(k_0,k_1,...,k_{n-1})$ dans $A$ est pli\'ee positivement par rapport \`a une chambre $D$ si, en notant $M_j$ le mur de $D_j$ de type $k_j$ (commun \`a $D_j$ et $D_{j+1}$), on a
$$
D_j=D_{j+1} \Longrightarrow\qquad D\quad \mid_{M_j} D_j=D_{j+1}\ .
$$

Pour une chambre $D$ et $\lambda\in\mathcal{S}$, on note $w(D,\lambda)$ l'\'el\'ement de $W^v$ de plus petite longueur tel que $\lambda\in w(D,\lambda).D$

Soit $C_\xi = proj_\xi(-C)$ dans $A$. Soit $\Gamma$ une galerie minimale de $-C$ \`a $\pi'_+$. Comme il existe une $-C-$cha\^{\i}ne de $op_A\xi$ \`a $\pi'_+$, $w(-C,op_A\xi) \leqslant w(-C,\pi'_+)$ et donc il existe une galerie $\gamma = (C_0 = -C,C_1,...,C_n)$ de m\^eme type que $\Gamma$ de $-C$ \`a $op_A\xi$. On veut montrer qu'on peut supposer $\gamma$ pli\'ee positivement par rapport \`a $C_\xi$.

Si $\gamma$ ne l'est pas,  soit $j$ le plus petit indice tel qu'on soit dans la situation : \quad$C_\xi, C_j = C_{j+1}\quad \mid_{M_j}$. Alors, comme $\gamma$ aboutit \`a $op_A\xi$, cette galerie va traverser le mur $M_j$ apr\`es l'indice $j$ ou finir dans ce mur. Posons $j_{max} = \max\{k\in]j,n]\mid  M_k = M_j \;{\mathrm ou}\; op_A\xi\in M_k \}$. On d\'efinit une nouvelle galerie $\lambda = (L_0,...,L_n)$ par
$$
L_k =
\left\{
\begin{array}{ll}
 C_k & \hbox{ si } k\leqslant j\\
s_{M_j}C_k & \hbox{ si } j+1\leqslant k\leqslant j_{max}\\
C_k & \hbox{ si } k > j_{max}\ .\\
\end{array}
\right.
$$ Ainsi $\lambda$ devient pli\'ee positivement par rapport \`a $C_\xi$ en $M_j$ et reste de m\^eme type que $\Gamma$. On recommence cette proc\'edure avec $\lambda$ et ainsi de suite... Au final, on obtient une galerie $\delta = (-C=D_0,...,D_n)$ pli\'ee positivement par rapport \`a $C_\xi$ entre $-C$ et $op_A\xi$.

Notons $\{i_1,...,i_r\}\subset \{1,...,n\}$ les indices (ordonn\'es de mani\`ere croissante) o\`u la galerie $\delta$ est pli\'ee. Alors,
$$
\begin{array}{rcl}
\pi'_+ & = & s_{M_{i_1}}\cdots  s_{M_{i_{r-1}}}s_{M_{i_r}} (s_{M_{i_1}}\cdots  s_{M_{i_{r-1}}})^{-1} \cdots (s_{M_{i_1}}s_{M_{i_2}}s_{M_{i_1}}) s_{M_{i_1}} op_A\xi\\
 & = & s_{M_{i_1}}\cdots s_{M_{i_r}} op_A\xi\\
 & = & \tau_{i_r}\cdots \tau_{i_1}op_A\xi\ ,
\end{array}
$$ o\`u $\tau_{i_j} = s_{M_{i_1}}\cdots  s_{M_{i_{j-1}}}s_{M_{i_j}} (s_{M_{i_1}}\cdots  s_{M_{i_{j-1}}})^{-1}$. A chaque \'etape, on s'\'eloigne de $-C$ et on d\'eplie la galerie $\delta$. En effet, apr\`es le premier d\'epliage, on a :
$$
-C, D_1, ..., D_{i_1}\quad \mid_{M_{i_1}}\quad \tau_{i_1}(D_{i_1+1}), \tau_{i_1}(op_A\xi),\xi
$$ car $\delta$ est pli\'ee positivement par rapport \`a $C_\xi$. La galerie
$$
\delta^1 = (-C=D_0,...,D_{i_1}, \tau_{i_1}(D_{i_1+1}),..., \tau_{i_1}(D_{i_2}), \tau_{i_1}(D_{i_2+1}),..., \tau_{i_1}(D_{n}))
$$ est, jusqu'\`a l'indice $i_2$, minimale et donc \'egale \`a $\Gamma_{\leqslant i_2}$. De plus, on sait que
$$
op_A\xi, D_{i_2+1} = D_{i_2}\quad\mid_{M_{i_2}}\quad C_\xi,\xi\ ,
$$ en appliquant $\tau_{i_1}$ on obtient
$$
\tau_{i_1}op_A\xi, \tau_{i_1}D_{i_2+1} = \tau_{i_1}D_{i_2} \quad\mid_{\tau_{i_1}M_{i_2}}\quad \tau_{i_1}\xi\ .
$$ Donc, $-C, \tau_{i_1}op_A\xi, \tau_{i_1}D_{i_2+1} = \tau_{i_1}D_{i_2}$ sont du m\^eme c\^ot\'e de $\tau_{i_1}M_{i_2}$. Ainsi quand on d\'eplie une deuxi\`eme fois par rapport \`a $\tau_{i_1}M_{i_2}$, on s'\'eloigne de $-C$. On a donc obtenu une cha\^{\i}ne de $op_A\xi$ \`a $\pi'_+$ le long de la galerie minimale $\gamma$ de $-C$ \`a $\pi'_+$.
\end{proof}

\subsection{Conclusion}
\label{sseConcl}

D'apr\`es les propositions \ref{prLocGlob}, \ref{prPliDep} et \ref{prChaines}, on a:

\begin{thm}\label{thPliage}
Soient $[z,x,x_1,...,x_n,y,z]$ un polygone dans un appartement $A$ de $\mathcal I$, $\pi:[0,1]\to A$ une param\'etrisation (lin\'eaire par morceaux et \`a vitesse constante) de $[x,x_1,...,x_n,y]$ avec $x_i=\pi(t_i)$, $\mathfrak a$ une alc\^ove contenant $z$ et $\rho=\rho_{A,\mathfrak a}$ la r\'etraction sur $A$ centr\'ee en $\mathfrak a$.

Pour tout $i\in\{1,...,n\}$, on note $\vec{a}_{x_i}$ la chambre de $\overrightarrow{Ax_i}$ qui contient tous les vecteurs $\overrightarrow{x_iz'}$ pour $z'\in\mathfrak a$.

Alors $[z,x,x_1,...,x_n,y,z]$ est l'image par $\rho$ d'un triangle $[z,x,\tilde y,z]$ de $\mathcal I$ si et seulement si, pour  tout $i\in\{1,...,n\}$, il existe une $(W^v_{\pi(t_i)},\vec{a}_{x_i})-$cha\^{\i}ne de $\pi'_-(t_i)$ \`a $\pi'_+(t_i)$.
\end{thm}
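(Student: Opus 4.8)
The plan is to read the statement off as a straightforward concatenation of the three results \ref{prLocGlob}, \ref{prPliDep} and \ref{prChaines}, the only genuine work being to match the notation correctly inside each tangent building $\Sigma_{x_i}(\mathcal I)$.

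First I would invoke the local criterion \ref{prLocGlob}, or rather the equivalent form stated in the remark following it: the polygon $[z,x,x_1,\dots,x_n,y,z]$ is the image by $\rho$ of a triangle $[z,x,\tilde y,z]$ of $\mathcal I$ if and only if, for each $i\in\{1,\dots,n\}$, there exists $\eta_i\in\Sigma_{x_i}(\mathcal I)$ which is opposite to $-\pi'_-(t_i)$ and satisfies $\rho_{\overrightarrow{Ax_i},\vec a_{x_i}}(\eta_i)=\pi'_+(t_i)$. This replaces the single global geometric question by $n$ independent local ones, one in each spherical building $\Sigma_{x_i}(\mathcal I)$.

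Next I would fix $i$ and apply \ref{prPliDep} inside $\mathcal S=\Sigma_{x_i}(\mathcal I)$, instantiating its generic apartment as the tangent apartment $\overrightarrow{Ax_i}$, its chamber ``$-C$'' as $\vec a_{x_i}$, its point $\xi$ as $-\pi'_-(t_i)$, and its target ``$\pi'_+$'' as $\pi'_+(t_i)$; the retraction is then exactly $\rho_{\overrightarrow{Ax_i},\vec a_{x_i}}$. The identification to keep in mind is that the opposite of $\xi=-\pi'_-(t_i)$ inside $\overrightarrow{Ax_i}$ is $op_{\overrightarrow{Ax_i}}(-\pi'_-(t_i))=\pi'_-(t_i)$. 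Proposition \ref{prPliDep} then asserts that such an $\eta_i$ exists if and only if there is a minimal gallery $\Gamma$ from $\vec a_{x_i}$ to $\pi'_+(t_i)$ together with a chain along $\Gamma$ from $\pi'_-(t_i)$ to $\pi'_+(t_i)$.

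Finally \ref{prChaines} trades ``chain along a minimal gallery'' for the intrinsic notion of a $\vec a_{x_i}$-chain, that is a $(W^v_{x_i},\vec a_{x_i})$-chain, from $\pi'_-(t_i)$ to $\pi'_+(t_i)$. Since the Weyl group of the tangent apartment $\overrightarrow{Ax_i}$ is $W^v_{x_i}=W^v_{\pi(t_i)}$, the group generated by the reflections in the walls passing through $\pi(t_i)$, this is precisely a $(W^v_{\pi(t_i)},\vec a_{x_i})$-chain from $\pi'_-(t_i)$ to $\pi'_+(t_i)$. Concatenating the three equivalences for every $i$ yields the theorem. No single proposition is the obstacle here, as all three are already established; the main point requiring care is the bookkeeping of the correspondences, above all that $\xi$ in \ref{prPliDep} must be taken as $-\pi'_-(t_i)$ so that $op_A\xi=\pi'_-(t_i)$, and that the distinguished chamber ``$-C$'' is $\vec a_{x_i}$. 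With these identifications the spherical-building statements apply verbatim in each $\Sigma_{x_i}(\mathcal I)$.
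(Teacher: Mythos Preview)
Your proposal is correct and matches the paper's proof exactly: the paper derives the theorem in one line by citing propositions \ref{prLocGlob}, \ref{prPliDep} and \ref{prChaines}, and your write-up simply makes explicit the identifications (in particular $\xi=-\pi'_-(t_i)$, $-C=\vec a_{x_i}$, and $W^v$ of $\overrightarrow{Ax_i}$ equal to $W^v_{\pi(t_i)}$) needed to apply those spherical-building statements inside each $\Sigma_{x_i}(\mathcal I)$.
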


On notera la diff\'erence avec un chemin de Hecke (section \ref{sseCheKM}). On dira qu'un chemin $\pi$ v\'erifiant la condition ci-dessus est de Hecke par rapport \`a l'alc\^ove $\mathfrak a$.

\begin{rems}\label{remHecke}

1) Soient $x$ un sommet sp\'ecial de l'alc\^ove $\mathfrak a$ et $Q$ le quartier de sommet $x$ oppos\'e \`a $\mathfrak a$. Un chemin $\pi$ enti\`erement contenu dans $Q$ est de Hecke (par rapport \`a $-C^v$ o\`u $C^v$ est la direction de $Q$) si et seulement si il est de Hecke par rapport \`a $\mathfrak a$.

2) Les th\'eor\`emes \ref{thDecomp} et \ref{thPliage} ainsi que la remarque pr\'ec\'edente prouvent l'\'etape 1) du sch\'ema de d\'emonstration du th\'eor\`eme \ref{thSat}. En effet le chemin de Hecke $N\lambda+\pi$ de $N\lambda$ \`a $N\nu^*$ reste dans la chambre de Weyl $C^v$; il est donc de Hecke par rapport \`a l'alc\^ove $\mathfrak a_-$ (contenant $0$ et oppos\'ee \`a $C^v$). En d\'epliant le polygone $[0,N\lambda,\pi,N\nu^*,0]$ on obtient le triangle cherch\'e dans $\mathcal I$.
\end{rems}

\begin{coro}
\label{corSatTrip}
L'ensemble $\mathcal T$ des triplets $(\lambda,\mu,\nu)\in(P^{\vee+})^3$ tels qu'il existe dans $\mathcal I$ un triangle $[z,x,y,z]$ avec comme longueurs de c\^ot\'es $\lambda=d_{C^v}(z,x)$, $\mu=d_{C^v}(x,y)$ et $\nu=d_{C^v}(y,z)$ ne d\'epend que de l'appartement $\mathbb A$ et non de $\mathcal I$.

Cet ensemble  $\mathcal T(\mathbb A)$ est stable par homoth\'etie par $\mathbb N$.

Si $\mathbb A'$ est un appartement affine (ou vectoriel) associ\'e au m\^eme couple $(V,W^v)$, mais avec un ensemble de murs $\mathcal M'\subset\mathcal M$, alors $\mathcal T(\mathbb A')\subset\mathcal T(\mathbb A)$.
\end{coro}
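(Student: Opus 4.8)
The plan is to read Corollary~\ref{corSatTrip} entirely through the lens of Theorem~\ref{thPliage}, which converts the \emph{geometric} question ``does a geodesic triangle with prescribed $d_{C^v}$-side lengths exist in $\mathcal I$?'' into a \emph{combinatorial} condition living purely in $\mathbb A$: the existence of a polygon $[z,x,x_1,\dots,x_n,y,z]$ with the right side lengths that is of Hecke type with respect to some alcove $\mathfrak a\ni z$, i.e.\ admits at each breakpoint $\pi(t_i)$ a $(W^v_{\pi(t_i)},\vec a_{x_i})$-chain from $\pi'_-(t_i)$ to $\pi'_+(t_i)$. Once this translation is in place, all three assertions become statements about such ``$\mathbb A$-Hecke polygons'', and the role of $\mathcal I$ collapses to being any thick affine building modelled on $\mathbb A$.

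For the first assertion, given a triangle $[z,x,y,z]$ in $\mathcal I$ I would choose an apartment $A\cong\mathbb A$ and an alcove $\mathfrak a\ni z$, and retract by $\rho=\rho_{A,\mathfrak a}$. Since $\rho$ fixes $\mathfrak a$ pointwise, preserves types and contracts distances, and since a geodesic between two points lies in any apartment containing them, the two sides issued from $z$ retract to genuine segments $[z,\rho(x)]$ and $[\rho(y),z]$ of lengths $\lambda$ and $\nu$, while $[x,y]$ retracts to a $\mu$-path $\pi$; the result is an $\mathbb A$-polygon with side lengths $(\lambda,\mu,\nu)$. Theorem~\ref{thPliage} then says exactly that this polygon is of Hecke type with respect to $\mathfrak a$. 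Conversely, from any $\mathbb A$-Hecke polygon with these side lengths, the unfolding half of Theorem~\ref{thPliage} (which rests only on the thickness of the tangent buildings of $\mathcal I$) produces a genuine triangle. Thus membership of $(\lambda,\mu,\nu)$ in $\mathcal T$ is equivalent to an intrinsic condition on $\mathbb A$, the same for every thick $\mathcal I$ modelled on $\mathbb A$.

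For the stability under homothety I would fix an $\mathbb A$-Hecke polygon witnessing $(\lambda,\mu,\nu)\in\mathcal T(\mathbb A)$ and apply $h_N\colon v\mapsto Nv$ centred at the origin (a special vertex). It maps $\mathcal M$ into itself, sending $M(\alpha,k)$ to $M(\alpha,Nk)$, multiplies every side length by $N$, leaves each direction $\pi'_\pm(t_i)$ unchanged, and sends $x_i$ to $Nx_i$; since $\alpha(x_i)\in\mathbb Z$ forces $\alpha(Nx_i)=N\alpha(x_i)\in\mathbb Z$, one has $W^v_{x_i}\subseteq W^v_{Nx_i}$. As the direction from $Nx_i$ to $Nz$ equals that from $x_i$ to $z$, I can choose a centre alcove $\mathfrak a'\ni Nz$ whose reference chamber $\vec a'_{Nx_i}$ refines the coarser $\vec a_{x_i}$ and lies on the same side of every wall carrying the original chain. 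The chain then survives verbatim at each $Nx_i$, the scaled polygon is again $\mathbb A$-Hecke, and $(N\lambda,N\mu,N\nu)\in\mathcal T(\mathbb A)$.

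Finally, for $\mathcal T(\mathbb A')\subseteq\mathcal T(\mathbb A)$ with $\mathcal M'\subseteq\mathcal M$, note that $\mathbb A$ and $\mathbb A'$ share the same pair $(V,W^v)$, so an $\mathbb A'$-Hecke polygon is literally a polygon in $\mathbb A$. Because $\mathcal M'\subseteq\mathcal M$, at each breakpoint the local Weyl group computed for $\mathcal M'$ is contained in the one computed for $\mathcal M$, and choosing an $\mathbb A$-alcove $\mathfrak a\subseteq\mathfrak a'$ with $z\in\overline{\mathfrak a}$ makes the $\mathbb A$-reference chamber refine the $\mathbb A'$-one on the same sides of the (coarse) walls carrying the chain. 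Hence any chain valid in $\mathbb A'$ stays valid in $\mathbb A$, the polygon is $\mathbb A$-Hecke, and unfolding in a thick building modelled on $\mathbb A$ yields the triangle. The main obstacle throughout is precisely this bookkeeping of local data: one must verify that the finite groups $W^v_{\pi(t_i)}$ only move in the favourable direction (growing under $h_N$, shrinking under passage to $\mathcal M'$) and that the reference chambers $\vec a_{x_i}$ can be matched so that an already existing chain is not destroyed; everything else is a direct reading of Theorem~\ref{thPliage}.
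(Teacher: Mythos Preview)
Your proof is correct and follows essentially the same route as the paper: both reduce everything to Theorem~\ref{thPliage}, characterising $\mathcal T$ as the set of triples admitting a ``Hecke polygon'' in $\mathbb A$, from which independence of $\mathcal I$, the inclusion $\mathcal T(\mathbb A')\subset\mathcal T(\mathbb A)$, and stability under $\mathbb N$-dilation all follow by the local bookkeeping you describe. The only cosmetic difference is that the paper centres the homothety at a special vertex $z_0$ of the alcove $\mathfrak a$ containing $z$ (rather than at the origin), which makes the choice of the new alcove $\mathfrak a'\subset h_N(\mathfrak a)$ containing $h_N(z)$ and the inclusion $\vec a'_{h_N(x_i)}\subset\vec a_{x_i}$ slightly more transparent, but your variant works for the same reason.
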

\begin{proof}
$\mathcal T$ est l'ensemble des $(\lambda,\mu,\nu)\in(P^{\vee+})^3$ tels qu'il existe dans $\mathbb A$ un bon polygone (i.e. v\'erifiant la condition du th\'eor\`eme) $[z,x,x_1,...,x_n,y,z]$ avec $\lambda=d_{C^v}(z,x)$,  $\nu=d_{C^v}(y,z)$ et $\pi$ de type $\mu$. Il ne d\'epend donc que de $\mathbb A$ et contient $\mathcal T(\mathbb A')$. Si $z_0$ est un sommet sp\'ecial d'une alc\^ove contenant $z$, une homoth\'etie de rapport $n\in \mathbb N$ et centre $z_0$ transforme un bon polygone associ\'e \`a $(\lambda,\mu,\nu)$ en un bon polygone associ\'e \`a $(n\lambda,n\mu,n\nu)$; d'o\`u la seconde assertion.
\end{proof}

\noindent{\bf N.B.} On verra dans la section \ref{seGauss} (\ref{prSat}) que le c\^one $\mathcal T$ est satur\'e dans $(P^{\vee+})^3$.

\newpage
\section{Applications de Gauss et configurations semi-stables}
\label{seGauss}

\subsection{Le bord visuel de $\mathcal I$}
\label{sseBord}

On rappelle que l'immeuble $\mathcal I$ est un espace m\'etrique complet dont on notera la distance $d$. Il est muni de son syst\`eme complet d'appartements. Ainsi tout sous-ensemble convexe isom\'etrique \`a une partie de $\mathbb R^n$ est contenu dans un appartement \cite[11.53]{AB}.

Les r\'esultats suivants r\'esultent essentiellement de ce que $\mathcal I$ est un espace CAT(0) complet. Pour la plupart des d\'emonstrations on se reportera \`a \cite{BH}.

\subsubsection{Rayons et points id\'eaux}
\label{ssseIdeal}

Un rayon (ou une demi-droite) dans $\mathcal I$ est un sous-ensemble $\rho$ isom\'etrique \`a $[0,\infty[$. On confondra dans la suite le rayon et l'isom\'etrie $[0,\infty[\to \mathcal I$. Le point $x=\rho(0)$ est appel\'e l'origine de $\rho$ ou la base. Un rayon est convexe, il est donc contenu dans un appartement $A$ de $\mathcal I$. Et dans $A$, il est de la forme $\{(1-t)x +ty\mid t\geqslant 0\}$ pour $x\ne y$ dans $A$.

On dit que deux rayons $\rho_1$ et $\rho_2$ sont asymptotes (ou parall\`eles) si la fonction (convexe) $t\mapsto d(\rho_1(t),\rho_2(t))$ est born\'ee
. On v\'erifie que ceci d\'efinit une relation d'\'equivalence. Une classe d'\'equivalence de rayons est un point id\'eal de $\mathcal I$ (ou  point \`a l'infini).

Par rapport aux espaces CAT(0) g\'en\'eraux, beaucoup des d\'emonstrations des r\'esultats de cette section sont simplifi\'ees par les faits suivants:

- \'etant donn\'ees deux demi-droites $\rho,\sigma$ il existe des sous-demi-droites $\rho',\sigma'$ contenues dans un m\^eme appartement,

- si de plus $\mathfrak a$ est une alc\^ove ou un germe de quartier et $\rho,\sigma$ sont asymptotes, il existe des demi-droites $\rho_0\subset\rho,\rho_1,...,\rho_n\subset\sigma$ deux \`a deux asymptotes et telles que, $\forall i$, $\mathfrak a$, $\rho_{i}$ et $\rho_{i+1}$ soient contenus dans un m\^eme appartement.

La d\'emonstration du lemme suivant est classique.

\begin{lem}
\label{leRepres}
Soient $x$ un point de $\mathcal I$ et $\xi$ un point \`a l'infini. Alors il existe un unique rayon $\rho$ de base $x$ repr\'esentant $\xi$. On le notera $[x,\xi)$.
\end{lem}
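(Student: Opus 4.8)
La preuve est classique (cf.~\cite{BH}); en voici le plan, en exploitant pleinement que $\mathcal I$ est un espace CAT(0) complet.

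\emph{Construction.} On commencerait par repr\'esenter $\xi$ par un rayon $\sigma:[0,\infty[\,\to\mathcal I$ param\'etr\'e par la longueur d'arc, d'origine $o=\sigma(0)$. Pour chaque entier $n$, on pose $a_n=d(x,\sigma(n))$ et l'on note $\sigma_n:[0,a_n]\to\mathcal I$ l'unique g\'eod\'esique de $x$ \`a $\sigma(n)$ (l'unicit\'e venant de ce que $\mathcal I$ est CAT(0) complet), param\'etr\'ee par la longueur d'arc. Le plan est de montrer que les $\sigma_n$ convergent, uniform\'ement sur les compacts de $[0,\infty[$, vers un rayon $\rho$ de base $x$, puis que $\rho$ est asymptote \`a $\sigma$.

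\emph{Convergence (c\oe ur technique).} On fixe $t\ge0$ et l'on montre que $(\sigma_n(t))_n$ est de Cauchy; la compl\'etude fournira alors $\rho(t)=\lim_n\sigma_n(t)$, et la continuit\'e de $d$ fera de $\rho$ un rayon unitaire issu de $x$. Pour $n\le m$, on compare le triangle g\'eod\'esique $(x,\sigma(n),\sigma(m))$ \`a son triangle de comparaison euclidien; comme $d(\sigma(n),\sigma(m))=m-n$, l'in\'egalit\'e CAT(0) donne $d(\sigma_n(t),\sigma_m(t))\le 2t\sin(\bar\gamma_{nm}/2)$, o\`u $\bar\gamma_{nm}$ est l'angle de comparaison en $x$, caract\'eris\'e par
\[
1-\cos\bar\gamma_{nm}=\frac{(m-n)^2-(a_m-a_n)^2}{2\,a_na_m}.
\]
Le point d\'elicat, qui est l'obstacle principal, est de voir que $\bar\gamma_{nm}\to0$ lorsque $n,m\to\infty$. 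Pour cela on introduit l'exc\`es $u(n,m)=a_n+(m-n)-a_m\ge0$: on v\'erifie qu'il cro\^it en $m$, d\'ecro\^it en $n$, et que $\lim_{n\to\infty}\sup_{m\ge n}u(n,m)=0$ (parce que $m\mapsto m-a_m$ cro\^it et reste born\'e par $d(x,o)$, donc converge), tandis que $a_n\to\infty$. Comme le num\'erateur ci-dessus vaut $u(n,m)\big((m-n)+(a_m-a_n)\big)\le 2u(n,m)(m-n)$, on en tire $1-\cos\bar\gamma_{nm}\le u(n,m)(m-n)/(a_na_m)$, quantit\'e qui tend vers $0$ puisque $(m-n)/a_m$ reste born\'e (on a $a_m\ge m-d(x,o)$) et $u(n,m)/a_n\to0$. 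Dans l'immeuble, ces comparaisons peuvent d'ailleurs se faire dans de vrais appartements euclidiens, gr\^ace au fait rappel\'e plus haut que des sous-demi-droites convenables sont contenues dans un m\^eme appartement.

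\emph{Asymptotie et unicit\'e.} La convexit\'e de la m\'etrique, appliqu\'ee aux g\'eod\'esiques $u\mapsto\sigma_n(ua_n)$ et $u\mapsto\sigma(un)$ (allant de $x$ et de $o$ vers $\sigma(n)$), donne $d(\sigma_n(ua_n),\sigma(un))\le(1-u)d(x,o)$; en prenant $u=t/a_n$ et en passant \`a la limite (avec $a_n/n\to1$ puisque $|a_n-n|\le d(x,o)$), on obtient $d(\rho(t),\sigma(t))\le d(x,o)$, donc $\rho$ est asymptote \`a $\sigma$ et repr\'esente bien $\xi$. Pour l'unicit\'e, si $\rho_1$ et $\rho_2$ sont deux rayons de base $x$ repr\'esentant $\xi$, ils sont asymptotes entre eux (l'asymptotie \'etant une relation d'\'equivalence), donc $f(t)=d(\rho_1(t),\rho_2(t))$ est convexe, born\'ee et v\'erifie $f(0)=0$; une fonction convexe born\'ee sur $[0,\infty[$ ne saurait \^etre croissante, donc $f$ est d\'ecroissante, positive et nulle en $0$, d'o\`u $f\equiv0$ et $\rho_1=\rho_2$.
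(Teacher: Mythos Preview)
Your argument is correct and is precisely the classical CAT(0) proof the paper has in mind: the paper gives no proof of its own here, simply declaring the result classical and referring to \cite{BH}, and your limit-of-geodesics construction, comparison-angle Cauchy estimate, and convexity argument for uniqueness are exactly the Bridson--Haefliger proof (cf.~II.8.19 there). The only nuance is that the paper signals, just before the lemma, two building-specific facts (sub-half-lines of any two rays lie in a common apartment; asymptotic rays are linked by a finite chain in apartments containing a fixed alcove) that would let one shortcut much of the CAT(0) analysis---you nod to this but do not actually use it, which is fine since your general argument is complete as written.
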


On note $\partial_\infty\mathcal I$ l'ensemble des points \`a l'infini. Soit $F$ une face d'un quartier $\mathfrak Q = x + C$, on note $F_\infty$ l'ensemble des points \`a l'infini $\xi$ tels que $F$ contienne le rayon $[x,\xi)$. Une facette \`a l'infini $\mathfrak f$
est un sous-ensemble de $\partial_\infty\mathcal I$ tel que $\mathfrak f = F_\infty$ pour $F$ une face de quartier.

Les r\'esultats suivant sont classiques
\begin{lem}
\label{leClass}
1) Si $\mathfrak f$ est une facette \`a l'infini et $x$ un point de $\mathcal I$, alors il existe une face de quartier $F$ bas\'ee en $x$ telle que $F_\infty = \mathfrak f$. Ainsi, il y a une bijection entre les facettes \`a l'infini et les faces de quartier bas\'ees en tout point $x$.

2) Deux quartiers de $\mathcal I$ donnent la m\^eme facette \`a l'infini si, et seulement si, ils sont \'equipollents, c'est \`a dire correspondent au m\^eme germe.

3) Les int\'erieurs relatifs des facettes \`a l'infini forment une partition de $\partial_\infty\mathcal I$.
\end{lem}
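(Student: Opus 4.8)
The plan is to derive all three assertions from Lemme~\ref{leRepres} (the unique ray $[x,\xi)$ issued from $x$ representing $\xi$), the two facts recalled just before the statement (any two rays share sub-rays in a common appartement, and an alc\^ove or germe de quartier lies with an asymptotic pair in common appartements), and the two Bruhat--Tits properties of the germes paragraph (two germes de quartier lie in a common appartement, and a germe de quartier and an alc\^ove lie in a common appartement). First I would record the local shape of a face de quartier: a face $F$ of $\mathfrak Q=x+C$ is $F=x+\overline{F^v}$ for a vectorial facette $F^v$ of $C$, its relative interior being $x+F^v$, and a ray $[x,\xi)$ lies in $F$ exactly when its direction lies in $\overline{F^v}$ (in the relative interior when the direction lies in the open cone $F^v$). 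In particular, for a face based at $x$ one has the identity $F=\bigcup_{\xi\in F_\infty}[x,\xi)$, since every point of $F$ lies on the ray from $x$ to the id\'eal point in its direction, and conversely each such ray stays in $F$. This identity already reduces everything to the elementary geometry of cones inside a single appartement.

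For 2), if $\mathfrak Q_1$ and $\mathfrak Q_2$ are \'equipollents their intersection contains a quartier $\mathfrak Q_3$; inside a common appartement each ray of $\mathfrak Q_i$ issued from its sommet is a translate, hence an asymptote, of a ray of $\mathfrak Q_3$, so $(\mathfrak Q_1)_\infty=(\mathfrak Q_3)_\infty=(\mathfrak Q_2)_\infty$. Conversely, suppose $(\mathfrak Q_1)_\infty=(\mathfrak Q_2)_\infty$ and let $\mathfrak q_i$ be the germe of $\mathfrak Q_i$. The first implication shows $(\mathfrak Q_i)_\infty$ depends only on $\mathfrak q_i$, and the Bruhat--Tits property puts $\mathfrak q_1,\mathfrak q_2$ in a common appartement $A$, where they become quartiers of directions $C_1,C_2$. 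Inside the single appartement $A$ a chambre id\'eale determines its direction, so the equality of ideal sets forces $C_1=C_2$, whence $\mathfrak q_1=\mathfrak q_2$ and the quartiers are \'equipollents.

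For 1), given $\mathfrak f=F_\infty$ with $F$ based at $y$ and a point $x$, I take the germe $\mathfrak q$ of a quartier $\mathfrak Q\supset F$ and an alc\^ove $\mathfrak a\ni x$; the Iwasawa property furnishes an appartement $B\supset\mathfrak a\cup\mathfrak q$, hence $x\in B$, and in $B$ I realize $\mathfrak q$ by $\mathfrak Q'=x+C'$ and let $F'$ be its face of direction $F^v$. Since $\mathfrak Q$ and $\mathfrak Q'$ share the germe $\mathfrak q$ they are \'equipollents, so their intersection contains a sub-quartier lying in both $A$ and $B$; comparing its face of direction $F^v$ (asymptote to both $F$ and $F'$, parallel faces in an appartement having the same trace at infinity) yields $F'_\infty=F_\infty=\mathfrak f$, and the displayed identity identifies $F'$ with $\bigcup_{\xi\in\mathfrak f}[x,\xi)$. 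This is surjectivity of $F\mapsto F_\infty$; injectivity is immediate from $F=\bigcup_{\xi\in F_\infty}[x,\xi)$ together with Lemme~\ref{leRepres}, since two faces based at the same $x$ with the same facette \`a l'infini then coincide. For 3), fix $x$: by Lemme~\ref{leRepres} every $\xi$ is represented by a unique ray $[x,\xi)$, whose direction lies in exactly one open vectorial facette, so $\xi$ lies in the relative interior of exactly one facette \`a l'infini based at $x$; covering and disjointness are the partition of the directions at $x$ into open facettes, transported by uniqueness of rays.

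The hard part will be the passage, in the converse of 2) and in the face comparison of 1), from equality of the \emph{id\'eal} point-sets to a genuine geometric relation between quartiers lying a priori in different appartements. The crux is that asymptoty must respect the facet combinatorics: a ray asymptote to a ray of direction in an open cone $F^v$ must, in any appartement realizing both, again have direction in $\overline{F^v}$ of the same type. Establishing this compatibility --- rather than the bare CAT(0) parallelism of rays --- is where the building axioms are really used; once it is in place, every step collapses to translation of cones inside one appartement.
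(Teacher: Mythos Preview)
The paper does not prove this lemma at all: it is introduced by the sentence ``Les r\'esultats suivants sont classiques'' and stated without proof (the label \texttt{leClass} refers to this). Your proposal therefore supplies strictly more than the paper does, and the approach you take --- reducing everything via Lemme~\ref{leRepres} and the two Bruhat--Tits properties (germe/germe and germe/alc\^ove in a common appartement) to cone geometry inside a single appartement --- is the standard one and is correct in outline.

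One small point to tighten in 1): when you write ``let $F'$ be its face of direction $F^v$'', the vectorial facette $F^v$ lives in the appartement of $F$, not in $B$; what you mean (and what your parenthetical hints at) is that the sub-quartier $\mathfrak Q''\subset\mathfrak Q\cap B$ carries a face $F''$ parallel to $F$ in the first appartement, hence $F''_\infty=\mathfrak f$, and then $F'$ is the face of $x+C'$ parallel to $F''$ inside $B$. Making that two-step transport explicit removes any ambiguity about comparing ``directions'' across appartements. Your closing paragraph correctly isolates this as the only genuinely non-formal step.
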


\subsubsection{Structure d'immeuble}
\label{ssseImmInfini}

On d\'efinit une relation entre les facettes \`a l'infini : $\mathfrak f'$ est une face de $\mathfrak f$ si pour tout point $x$ de $\mathcal I$, la face de quartier $F'$ associ\'ee \`a $\mathfrak f$ est une face de $F$, la face associ\'ee \`a $\mathfrak f$; comme on a consid\'er\'e des facettes ferm\'ees cela \'equivaut \`a $\mathfrak f'\subset \mathfrak f$. Pour un appartement $A$ de $\mathcal I$, on note $A_\infty$ l'ensemble des facettes \`a l'infini donn\'ees par les faces de quartiers de $A$. Cet ensemble est un complexe simplicial et est stable par passage aux faces. En fait, c'est un complexe isomorphe au complexe de Coxeter associ\'e \`a $(W^v,S)$.

Le lemme \ref{leClass} permet de d\'emontrer le th\'eor\`eme suivant.

\begin{thm}
\label{thImmInfini}

Soit $I_\infty$ l'ensemble des facettes \`a l'infini de $\mathcal I$. $I_\infty$ est un immeuble sph\'erique, ses appartements sont en bijection avec ceux de $\mathcal I$. De plus, sa r\'ealisation g\'eom\'etrique est en bijection avec $\partial_\infty\mathcal I$.
\end{thm}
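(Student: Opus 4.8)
Le plan est de v\'erifier les axiomes d'un immeuble, au sens de \cite{T74}, pour le complexe simplicial $I_\infty$ dont les simplexes sont les facettes \`a l'infini munies de la relation de face d\'ej\`a d\'efinie, et dont les appartements sont les $A_\infty$. L'axiome demandant que chaque appartement soit un complexe de Coxeter est acquis : on a rappel\'e que $A_\infty$ est isomorphe au complexe de Coxeter de $(W^v,S)$, fini, d'o\`u le caract\`ere sph\'erique. Restent donc \`a \'etablir (B1) : deux facettes \`a l'infini sont toujours contenues dans un m\^eme $A_\infty$, et (B2) : deux appartements $A_\infty,A'_\infty$ partageant deux simplexes sont reli\'es par un isomorphisme fixant ces simplexes ; puis les deux bijections annonc\'ees.

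Pour (B1), on fixe un point $x\in\mathcal I$ et, par le Lemme \ref{leClass}, on repr\'esente les deux facettes par des faces de quartier $F_1,F_2$ bas\'ees en $x$. Leurs germes en $x$ sont des facettes de l'immeuble tangent $\Sigma_x(\mathcal I)$ du paragraphe \ref{ssseTang}, qui est un immeuble sph\'erique ; deux de ses facettes sont donc dans un m\^eme appartement, lequel est par construction de la forme $\overrightarrow{Ax}$ pour un appartement affine $A\ni x$. Comme une face de quartier bas\'ee en $x$ est le c\^one $x+\vec C$ de direction sa facette vectorielle, l'appartenance des germes \`a $\overrightarrow{Ax}$ force $F_1,F_2\subset A$, d'o\`u $\mathfrak f_1,\mathfrak f_2\in A_\infty$.

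Pour (B2), si $A_\infty$ et $A'_\infty$ contiennent deux m\^emes facettes $\mathfrak f,\mathfrak g$, on en choisit des quartiers repr\'esentants dans $A$ et dans $A'$ ; le Lemme \ref{leClass} les dit \'equipollents, donc leurs intersections contiennent des quartiers, eux-m\^emes contenus dans $A\cap A'$. L'axiome des immeubles affines donne un isomorphisme $\psi:A\to A'$ fixant $A\cap A'$ point par point ; il induit au bord un isomorphisme de complexes de Coxeter $A_\infty\to A'_\infty$ qui fixe $\mathfrak f$ et $\mathfrak g$, ces derni\`eres \'etant port\'ees par des quartiers fix\'es.

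Pour la bijection entre appartements, la surjectivit\'e est la d\'efinition et l'injectivit\'e repose sur le fait qu'un appartement affine est d\'etermin\'e par son bord visuel : si $A_\infty=A'_\infty$, ce bord contient une paire de chambres \`a l'infini oppos\'ees, laquelle d\'etermine un unique appartement, d'o\`u $A=A'$. Enfin la bijection entre la r\'ealisation g\'eom\'etrique de $I_\infty$ et $\partial_\infty\mathcal I$ d\'ecoule du Lemme \ref{leClass}, dont le point 3) dit que les int\'erieurs relatifs des facettes partitionnent $\partial_\infty\mathcal I$ ; dans chaque appartement la r\'ealisation de $A_\infty$ s'identifie \`a la sph\`ere des directions $\partial_\infty A$, et ces identifications se recollent. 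L'obstacle principal est le passage du local au global dans (B1) et dans l'injectivit\'e : faire de germes ou de sous-demi-droites communes (les deux faits rappel\'es avant le Lemme \ref{leRepres}) des quartiers entiers dans un m\^eme appartement affine, ce qui exige la compl\'etude du syst\`eme d'appartements et le caract\`ere CAT(0) de $\mathcal I$.
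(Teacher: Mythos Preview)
Ton argument pour (B1) via l'immeuble tangent $\Sigma_x(\mathcal I)$ comporte une lacune r\'eelle. L'immeuble tangent ne capture que l'information locale en $x$ : ses chambres sont les germes d'alc\^oves de $x^*$, et un appartement tangent $\overrightarrow{Ax}$ ne d\'etermine $A$ qu'au voisinage de $x$ --- plusieurs appartements affines distincts contenant $x$ donnent le m\^eme $\overrightarrow{Ax}$. Ainsi, m\^eme si les germes de $F_1,F_2$ tombent dans un m\^eme appartement tangent, rien ne garantit que l'appartement affine $A$ que tu en d\'eduis contienne les faces de quartier enti\`eres. L'implication <<~germes dans $\overrightarrow{Ax}$ donc $F_1,F_2\subset A$~>> est fausse : dans un arbre \'epais, deux demi-droites issues d'un sommet $x$ ont leurs ar\^etes initiales dans une infinit\'e de g\'eod\'esiques bi-infinies passant par $x$, dont presque aucune ne contient l'une ou l'autre demi-droite au-del\`a de la premi\`ere ar\^ete. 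Accessoirement, si $x$ n'est pas sp\'ecial, le germe d'un quartier en $x$ n'est m\^eme pas une facette de $\Sigma_x(\mathcal I)$ (dont le groupe de Weyl est $W^v_x\subsetneq W^v$) mais une r\'eunion de chambres.

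Le papier ne d\'etaille pas la preuve et renvoie au Lemme \ref{leClass}, mais l'ingr\'edient global \`a invoquer est celui rappel\'e juste avant le Lemme \ref{leRepres} : deux demi-droites de $\mathcal I$ poss\`edent des sous-demi-droites contenues dans un m\^eme appartement (ou, de fa\c{c}on \'equivalente, deux germes de quartier sont toujours dans un m\^eme appartement, cf.~3.3). En prenant un point $\xi_i$ dans l'int\'erieur relatif de chaque $\mathfrak f_i$ et des rayons les repr\'esentant, on obtient un appartement $A$ avec $\xi_1,\xi_2\in\partial_\infty A$ ; comme $A_\infty$ est un sous-complexe et que les int\'erieurs relatifs des facettes partitionnent $\partial_\infty\mathcal I$ (Lemme \ref{leClass}~3)), on en tire $\mathfrak f_1,\mathfrak f_2\in A_\infty$. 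C'est un \'enonc\'e global, non d\'eductible de la structure tangente en un point. Tu reconnais toi-m\^eme en fin de preuve que ce passage local--global est <<~l'obstacle principal~>>, mais tu ne le franchis pas ; il faut remplacer l'argument tangent par ce fait global. Le reste de ta preuve --- (B2) et les deux bijections --- est correct.
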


Dans notre cas $\mathcal I$ est l'immeuble de Bruhat-Tits d'un groupe r\'eductif sur un corps non archim\'edien complet, alors $I_\infty$ est l'immeuble de Tits de ce groupe (dont les faces correspondent bijectivement aux sous-groupes paraboliques).

\begin{rem}
On peut \'etendre la topologie de $\mathcal I$ \`a $\overline{\mathcal I}=\mathcal I\amalg \partial_\infty\mathcal I$ en une topologie appel\'ee la topologie conique et alors, si $\mathcal I$ est localement fini,  $\overline{\mathcal I}$ est une compactification de $\mathcal I$. Une base d'ouverts de cette topologie est form\'ee des ouverts de $\mathcal I$ et des ensembles de la forme
$C_x(\xi,\varepsilon) = \{\eta\in\overline{\mathcal I} \mid \eta\ne x,\  \angle_x(\eta,\xi) < \varepsilon \}$
, o\`u $x\in\mathcal I$, $\xi\in\partial_\infty\mathcal I$, $\epsilon >0$ 
 et   $\angle_x(\eta,\xi)$ est l'angle dans un appartement contenant $x$ et le d\'ebut des deux g\'eod\'esiques $[x,\xi)$ et $[x,\eta)$ (si $\eta\in\partial_\infty\mathcal I$ cela co\"{\i}ncide avec la d\'efinition ci-dessous).

Cela dit, la topologie sur $\partial_\infty\mathcal I$ qui va nous int\'eresser est celle de la distance de Tits
\end{rem}

\subsubsection{Angles et distance de Tits}
\label{ssseAngleTits}

Soient $x$ un point de $\mathcal I$, $\xi_1$ et $\xi_2$ deux points \`a l'infini. On pose $\rho_1 = [x,\xi_1)$ et $\rho_2 = [x,\xi_2)$. On consid\`ere le triangle $T(t) = T(x,\rho_1(t),\rho_2(t))$ dans $\mathcal I$ et le triangle $\tilde T(t)=T(\tilde x,\tilde \rho_1(t),\tilde\rho_2(t))$ dans $\mathbb R^2$ "de comparaison", c'est \`a dire dont les c\^ot\'es ont la m\^eme longueur que ceux de $T(t)$ (la condition CAT(0) dit que l'application \'evidente de $\tilde T(t)$ dans $T(t)$ diminue les distances). On note $\tilde\alpha (t)$ l'angle en $\tilde x$ de ce triangle $\tilde T(t)$. Quand $t$ tend vers $0$, $\tilde\alpha(t)$ d\'ecro\^{\i}t contin\^ument et donc la limite existe, on pose
$$
\angle_x(\xi_1,\xi_2) = \lim_{t\to 0}\tilde\alpha(t).
$$ Cette limite est \'egale \`a $\angle_x (\rho_1(t),\rho_2(t))$ d\`es que $t$ est assez petit pour que tous les points de $T(t)$ soient dans un m\^eme appartement. Ainsi, si $T(x,y,z)$ est un triangle g\'eod\'esique dans $\mathcal I$, $\angle_x(y,z)$ est d\'efini de mani\`ere analogue et on a $\angle_x(y,z)\leqslant \angle_{\tilde x}(\tilde y,\tilde z)$, o\`u $T(\tilde x, \tilde y,\tilde z)$ est un triangle de comparaison dans $\mathbb R^2$ (cons\'equence facile de CAT(0)).

Maintenant, on d\'efinit la distance de Tits sur $\partial_\infty\mathcal I$ comme
$$
\angle_{Tits} (\xi_1,\xi_2) = \sup_{x\in\mathcal I} \angle_x (\xi_1,\xi_2).
$$ Par d\'efinition, $\angle_{Tits} (\xi_1,\xi_2)
\geqslant \angle_x (\xi_1,\xi_2)$ pour tous $x\in\mathcal I$, $\xi_1,\xi_2\in \partial_\infty\mathcal I$.

\begin{exem}
Si $\mathcal I$ est un arbre, alors $\angle_x (\xi_1,\xi_2) = 0$ ou $\pi$ et donc $\angle_{Tits} (\xi_1,\xi_2) = \pi$, toujours ! Par contre, en rang sup\'erieur, cette distance peut prendre toutes les valeurs entre $0$ et $\pi$.
\end{exem}

\begin{lem}
\label{leCalcTits}
Soient $\xi,\eta\in\partial_\infty\mathcal I$ et $\rho$ un repr\'esentant de $\xi$. On pose $\varphi(t) = \angle_{\rho(t)}(\xi,\eta)$. Alors, $\lim_{t\to \infty} \varphi(t) = \angle_{Tits}(\xi,\eta)$.
\end{lem}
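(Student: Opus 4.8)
The plan is to derive the statement from two inequalities. Write $\varphi(t)=\angle_{\rho(t)}(\xi,\eta)$. By the very definition $\angle_{Tits}(\xi,\eta)=\sup_{x}\angle_x(\xi,\eta)$, we have $\varphi(t)\le\angle_{Tits}(\xi,\eta)$ for every $t$. So it suffices to prove (a) that $\varphi$ is non-decreasing, which guarantees that $L:=\lim_{t\to\infty}\varphi(t)$ exists and satisfies $L\le\angle_{Tits}(\xi,\eta)$, and (b) that $\angle_x(\xi,\eta)\le\limsup_{t\to\infty}\varphi(t)$ for every $x\in\mathcal I$, which yields $\angle_{Tits}(\xi,\eta)\le L$ and hence equality. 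Throughout I use only that $\mathcal I$ is CAT(0): the Alexandrov angle is bounded above by the comparison angle (as recalled in \ref{ssseAngleTits}), the three angles of a geodesic triangle sum to at most $\pi$, the ray $[x,\xi)$ is unique (Lemme \ref{leRepres}), and the geodesic $[x,p]$ depends continuously on $p$, so that $p\to\xi$ in the cone topology forces $[x,p]\to[x,\xi)$ with convergence of the angles at $x$.

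For monotonicity, fix $s<t$ and set $p=\rho(s)$, $q=\rho(t)$; then $q$ lies on $[p,\xi)$ and $p$ lies between $\rho(0)$ and $q$. Choose $\zeta_R=[p,\eta)(R)$ far out on the ray from $p$ to $\eta$ and consider the geodesic triangle $\Delta(p,q,\zeta_R)$ with its Euclidean comparison triangle. Since $q$ points along $[p,\xi)$ and $\zeta_R\in[p,\eta)$, the angle at $p$ is exactly $\angle_p(q,\zeta_R)=\varphi(s)$; by comparison $\varphi(s)\le\bar\angle_p$ and $\angle_q(p,\zeta_R)\le\bar\angle_q$, while $\bar\angle_p+\bar\angle_q+\bar\angle_{\zeta_R}=\pi$. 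Letting $R\to\infty$, the far comparison angle $\bar\angle_{\zeta_R}$ tends to $0$ and $\angle_q(p,\zeta_R)\to\angle_q(p,\eta)$, whence $\varphi(s)+\angle_q(p,\eta)\le\pi$. Finally $p$ sits on the geodesic through $q$ on the side opposite to $\xi$, so $\angle_q(p,\xi)=\pi$, and the triangle inequality for angles at $q$ gives $\varphi(t)=\angle_q(\xi,\eta)\ge\angle_q(p,\xi)-\angle_q(p,\eta)=\pi-\angle_q(p,\eta)\ge\varphi(s)$. Thus $\varphi$ is non-decreasing and $L$ exists.

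For (b), fix $x$ and put $\sigma=[x,\eta)$. For large $t,r$ look at $\Delta(x,\rho(t),\sigma(r))$, whose angle sum is $\le\pi$; as $\sigma(r)\in[x,\eta)$ the angle at $x$ equals $\angle_x(\rho(t),\eta)$, independent of $r$. Letting $r\to\infty$, the vertex $\sigma(r)$ recedes toward $\eta$, so the angle it subtends on the fixed segment $[x,\rho(t)]$ tends to $0$ and $\angle_{\rho(t)}(x,\sigma(r))\to\angle_{\rho(t)}(x,\eta)$; hence $\angle_x(\rho(t),\eta)+\angle_{\rho(t)}(x,\eta)\le\pi$. Now let $t\to\infty$. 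Since $\rho(t)\to\xi$, continuity of the angle at $x$ gives $\angle_x(\rho(t),\eta)\to\angle_x(\xi,\eta)$. On the other side, from $\rho(t)$ both $x$ and the base $\rho(0)$ lie in a bounded region, so their directions nearly coincide while $\rho(0)$ is exactly opposite to $\xi$; therefore $\angle_{\rho(t)}(x,\xi)\to\pi$, and $\angle_{\rho(t)}(x,\eta)\ge\angle_{\rho(t)}(x,\xi)-\angle_{\rho(t)}(\xi,\eta)$ yields $\liminf_{t}\angle_{\rho(t)}(x,\eta)\ge\pi-L$. Combining, $\angle_x(\xi,\eta)\le\pi-(\pi-L)=L$; taking the supremum over $x$ gives $\angle_{Tits}(\xi,\eta)\le L$, and with (a) we conclude $\lim_{t\to\infty}\varphi(t)=\angle_{Tits}(\xi,\eta)$.

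The computational heart is benign; the care needed is in the CAT(0) limit facts invoked — continuity of the angle at a fixed point as an endpoint tends to an ideal point, the vanishing of the angle subtended by a bounded configuration from a receding vertex, and $\angle_{\rho(t)}(x,\xi)\to\pi$ — all standard and citable from \cite{BH}. The one genuine subtlety is organizational: one must take the auxiliary limit $r\to\infty$ (replacing the ideal point $\eta$ by finite approximants) \emph{before} the limit $t\to\infty$, and keep the directions of the two comparison inequalities consistent so that they assemble correctly.
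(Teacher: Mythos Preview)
The paper itself gives no proof of this lemma: it is stated in \S\ref{ssseAngleTits} among several facts for which the reader is referred to \cite{BH} (``Pour la plupart des d\'emonstrations on se reportera \`a \cite{BH}''). So there is no in-house argument to compare against; your proposal must be judged on its own.

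Your argument is correct and is essentially the standard CAT(0) proof one finds in \cite{BH} (around II.9.8--9.9). The two-step scheme --- monotonicity of $\varphi$ via the angle-sum inequality for an ideal triangle $(p,q,\eta)$, then $\angle_x(\xi,\eta)\le L$ via the ideal triangle $(x,\rho(t),\eta)$ and the fact $\angle_{\rho(t)}(x,\xi)\to\pi$ --- is exactly the right skeleton, and your bookkeeping with liminf/limsup is sound. The only step that deserves an explicit justification (rather than a citation) is the angle-continuity you invoke, e.g.\ $\angle_q(p,\zeta_R)\to\angle_q(p,\eta)$. In CAT(0) this follows in one line: since $[q,\zeta_R]\to[q,\eta)$ uniformly on compacta, for each fixed $s>0$ one has $\angle_q(\zeta_R,\eta)\le\bar\angle_q\big([q,\zeta_R](s),[q,\eta)(s)\big)\to 0$, and then the triangle inequality for Alexandrov angles gives $\vert\angle_q(p,\zeta_R)-\angle_q(p,\eta)\vert\le\angle_q(\zeta_R,\eta)\to 0$. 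The same remark justifies $\angle_{\rho(t)}(x,\sigma(r))\to\angle_{\rho(t)}(x,\eta)$ and $\angle_x(\rho(t),\eta)\to\angle_x(\xi,\eta)$. With that made explicit, your proof is complete and self-contained within the CAT(0) framework, which is precisely the level of generality the paper intends here.
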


\begin{thm}
\label{thInfiniComplet}
L'espace m\'etrique $(\partial_\infty\mathcal I,\angle_{Tits})$ est complet.
\end{thm}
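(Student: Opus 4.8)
The plan is to run the classical argument that the Tits boundary of a complete CAT(0) space is complete, feeding in completeness of $\mathcal{I}$ and Lemma \ref{leCalcTits} at the key points. Start with a Cauchy sequence $(\xi_n)$ in $(\partial_\infty\mathcal{I},\angle_{Tits})$ and fix once and for all a base point $x\in\mathcal{I}$. By Lemma \ref{leRepres} each $\xi_n$ is represented by a unique ray $\rho_n=[x,\xi_n)$ of origine $x$. The candidate limit will be produced by taking, for each fixed $t$, the limit of the points $\rho_n(t)$.

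First I would show that for every fixed $t\ge 0$ the sequence $(\rho_n(t))_n$ is Cauchy in $\mathcal{I}$. The comparison machinery of \ref{ssseAngleTits} gives, for the triangle $T(x,\rho_m(t),\rho_n(t))$ whose two sides issuing from $x$ both have length $t$, the exact identity $d(\rho_m(t),\rho_n(t))=2t\sin(\tilde\alpha_{mn}(t)/2)$, where $\tilde\alpha_{mn}(t)$ is the comparison angle at $x$. By CAT(0) convexity this comparison angle is non-decreasing in $t$, and its limit as $t\to\infty$ is bounded above by $\angle_{Tits}(\xi_m,\xi_n)$ (a standard consequence of the CAT(0) inequality, cf. \cite{BH}, in the same circle of ideas as Lemma \ref{leCalcTits}). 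Hence $d(\rho_m(t),\rho_n(t))\le 2t\sin\big(\tfrac12\angle_{Tits}(\xi_m,\xi_n)\big)\le t\,\angle_{Tits}(\xi_m,\xi_n)$ for all $t$. For fixed $t$ the right-hand side is as small as we like once $m,n$ are large, so $(\rho_n(t))_n$ is Cauchy, and completeness of $\mathcal{I}$ yields a limit $c(t)$.

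Next I would check that $c:[0,\infty)\to\mathcal{I}$ is itself a ray of origine $x$. Passing to the limit in $d(\rho_n(s),\rho_n(t))=|s-t|$ gives $d(c(s),c(t))=|s-t|$ and $c(0)=x$, so $c$ is an isometric embedding of $[0,\infty)$ and defines a point \`a l'infini $\xi\in\partial_\infty\mathcal{I}$. It remains to prove that $\xi_n\to\xi$ \emph{for the Tits distance}, which is the delicate point because a priori we control the convergence of the rays only pointwise (i.e. in the conical topology). The lever is lower semicontinuity of $\angle_{Tits}$ under this convergence: for fixed $x'$ the angle $\angle_{x'}$ is continuous along $\rho_m\to c$, and since $\angle_{x'}(\xi_n,\xi_m)\le\angle_{Tits}(\xi_n,\xi_m)$, letting $m\to\infty$ and then taking the supremum over $x'$ gives $\angle_{Tits}(\xi_n,\xi)\le\liminf_{m}\angle_{Tits}(\xi_n,\xi_m)$. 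Feeding in the Cauchy hypothesis forces $\angle_{Tits}(\xi_n,\xi)\to 0$.

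I expect the main obstacle to be exactly the comparison estimate of the second step, namely the assertion that the monotone limit of the comparison angles at the fixed base point does not exceed the Tits angle. This is where completeness of $\mathcal{I}$ alone is not enough and one must invoke the asymptotic behaviour of angles (Lemma \ref{leCalcTits} and the CAT(0) convexity behind it); the remaining steps --- recognising $c$ as a ray, and the semicontinuity upgrade from conical to Tits convergence --- are formal once this inequality is in hand.
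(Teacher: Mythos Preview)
The paper does not give its own proof of this theorem: it is stated in \ref{ssseAngleTits} without argument, under the blanket remark at the start of \ref{sseBord} that the results of that subsection follow from the CAT(0) property of $\mathcal I$ and that ``pour la plupart des d\'emonstrations on se reportera \`a \cite{BH}''. Your argument is precisely the standard proof one finds there (essentially \cite[II.9]{BH}), so you are aligned with what the paper intends.

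One point in your last step deserves care. You write that ``for fixed $x'$ the angle $\angle_{x'}$ is continuous along $\rho_m\to c$'' and deduce the lower semicontinuity of $\angle_{Tits}$ from this. In a CAT(0) space the Alexandrov angle at a fixed point is in general only \emph{upper} semicontinuous under pointwise convergence of geodesics (\cite[II.3.3]{BH}), which would give the inequality in the wrong direction. The correct route to lower semicontinuity of $\angle_{Tits}$ in the cone topology is the comparison-angle description you already used in your second step: for rays from a common point $x$ one has $\angle_{Tits}(\xi,\xi')=\sup_{t>0}\tilde\alpha_{\xi,\xi'}(t)$, and for each fixed $t$ the comparison angle $\tilde\alpha_{\xi,\xi'}(t)=2\arcsin\bigl(d(\rho(t),\rho'(t))/2t\bigr)$ is manifestly continuous in the cone topology; a supremum of continuous functions is lower semicontinuous. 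Equivalently, pass to the limit $m\to\infty$ directly in your estimate $d(\rho_m(t),\rho_n(t))\le 2t\sin\bigl(\tfrac12\angle_{Tits}(\xi_m,\xi_n)\bigr)$ to get $d(c(t),\rho_n(t))\le 2t\sin\bigl(\tfrac12\limsup_m\angle_{Tits}(\xi_m,\xi_n)\bigr)$, and then let $t\to\infty$. Either way the outline stands; only the justification of that one inequality needs to be reworded.
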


\subsection{Des triangles aux configurations semi-stables}\label{sseTrSS}

On cherche \`a r\'ealiser l'\'etape 2 de l'introduction et donc \`a construire une configuration semi-stable associ\'ee \`a un triangle de $\mathcal I$, cf. \cite{KLM2}.

\subsubsection{Fonctions de Busemann}\label{ssseBuse}

On peut plonger $\mathcal I$ dans l'espace $\mathcal F=\mathcal F(\mathcal I,\mathbb R)/\mathbb R$ des fonctions continues sur $\mathcal I$ (avec la topologie de la convergence uniforme sur les born\'es), quotient\'e par les constantes: \`a $x\in\mathcal I$ on associe la fonction $d(x,-)$. On peut montrer \cite[II 8.13]{BH} que $\overline{\mathcal I}$ est hom\'eomorphe \`a l'adh\'erence $\widehat{\mathcal I}$ de $\mathcal I$ dans $\mathcal F$. On ne va utiliser que le plongement de $\partial_\infty\mathcal I$ dans $\mathcal F$:

Soient $x\in\mathcal I$, $\xi\in\partial_\infty\mathcal I$ et $\rho$ un repr\'esentant de $\xi$. L'application $[0,\infty[\to\mathbb R, t\mapsto d(x,\rho(t))-t$ est minor\'ee (par $-d(x,\rho(0))$) et d\'ecroissante; on peut donc noter  $$b_\xi(x)=\lim_{t\to\infty}\;(d(x,\rho(t))-t)$$

Cette fonction $b_\xi$ de $\mathcal I$ dans $\mathbb R$ est la fonction de Busemann associ\'ee \`a $\xi$ (ou plut\^ot \`a $\rho$).

Elle ne d\'epend du choix de $\rho$ qu'\`a une constante pr\`es et est $1-$Lipschitzienne en $x$. On peut montrer que $b_\xi$ est la limite de $\rho(t)$ dans $\mathcal F$. Ainsi la classe de $b_\xi$ dans $\mathcal F$ est en fait dans $\widehat{\mathcal I}$ et ne d\'epend que de $\xi$.
Les lignes de niveau de $b_\xi$ sont les horosph\`eres de centre $\xi$.

\begin{exem}\label{exBuse} Si les points $x$, $y$ et la demi-droite $\rho$ sont dans un m\^eme appartement $A$ et si $\vec\xi={d\over{dt}}(\rho(t))$ est le vecteur directeur unitaire de $\rho$, on a : $b_\xi(x)-b_\xi(y)=\vec{xy}.\vec\xi=-d(x,y)\cos(\vec{yx},\vec\xi)$. Les horosph\`eres (en tout cas leurs intersections avec $A$) sont donc des hyperplans orthogonaux \`a $\rho$.
\end{exem}

\begin{lem}
\label{leDeriv} Soient $\sigma$ une g\'eod\'esique (parcourue \`a vitesse $1$) d'extr\'emit\'e $\eta\in\overline{\mathcal I}$ et $t\in\mathbb R$ tels que $\sigma(t)\not=\eta$, alors la d\'eriv\'ee directionnelle de $b_\xi$ selon $\eta$ en $t$ vaut : ${d\over{dt^+}}(b_\xi\circ\sigma)(t)=-\cos\angle_{\sigma(t)}(\eta,\xi)$.
\end{lem}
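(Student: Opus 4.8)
The plan is to reduce the computation of this right derivative to a first‑order computation inside a single apartment, where $b_\xi$ is affine by Example \ref{exBuse}; the only genuine difficulty is that the geodesic $\sigma$ and a representative ray of $\xi$ need not lie in a common apartment. Write $p=\sigma(t)$ and let $\vec v\in\Sigma_p(\mathcal I)$ be the forward germ $\sigma'_+(t)$, a unit vector pointing toward $\eta$ (it is well defined because $\sigma(t)\ne\eta$). Since the ray $[p,\xi)$ is convex it lies in an apartment $A$, so $\xi\in A_\infty$; let $\vec u\in\overrightarrow{Ap}$ be the unit germ of $[p,\xi)$ at $p$. By definition of the angle one has $\angle_p(\eta,\xi)=\angle(\vec v,\vec u)=:\theta$. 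As a Busemann function in a CAT(0) space $b_\xi$ is convex and $1$‑Lipschitz, so $s\mapsto b_\xi(\sigma(t+s))$ is convex and its right derivative at $0$ exists; it is this derivative that I must identify with $-\cos\theta$.

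The key step is to choose a germ of quarter $\mathfrak q\subset A$ whose chamber at infinity has $\xi$ in its closure (possible since the facet of $\xi$ lies in $A_\infty$), and to consider the retraction $\rho=\rho_{A,\mathfrak q}$. Such a retraction preserves $b_\xi$: for $x\in\mathcal I$ pick an apartment $B\supset\mathfrak q\cup\{x\}$ and the isomorphism $\varphi\colon B\to A$ fixing $\mathfrak q$; then $\varphi$ fixes the ideal chamber $\mathfrak q_\infty$, hence fixes $\xi$ and the representative ray of $\xi$ sitting deep in the quarter, so $b_\xi(\rho(x))=b_\xi(\varphi(x))=b_\xi(x)$. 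Consequently $b_\xi\circ\sigma=b_\xi\circ(\rho\circ\sigma)$, which moves the whole computation into $A$, where $\xi$ is now a genuine ideal point.

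For the computation, on $A$ the function $b_\xi$ is affine with gradient $-\vec u$ by Example \ref{exBuse}, and $\rho(p)=p$. By the very definition of the differential, $d\rho_p=\rho_{\overrightarrow{Ap},\vec{\mathfrak q}_p}$ is the vectorial retraction of the spherical building $\Sigma_p(\mathcal I)$ onto $\overrightarrow{Ap}$ centered at the chamber $\vec{\mathfrak q}_p$ toward $\mathfrak q$. Since $\vec u$ lies in the closure of $\vec{\mathfrak q}_p$ it is fixed by $d\rho_p$, and, choosing an apartment of $\Sigma_p(\mathcal I)$ containing $\vec v$ and $\vec{\mathfrak q}_p$ on which $d\rho_p$ restricts to an isometry, one gets $\Vert d\rho_p(\vec v)\Vert=1$ and $\angle(d\rho_p(\vec v),\vec u)=\angle(\vec v,\vec u)=\theta$. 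The forward germ of $\rho\circ\sigma$ at $t$ is exactly $d\rho_p(\vec v)$ (definition of the differential applied to the local segment $p+s\vec v$), whence
\[
\frac{d}{dt^+}(b_\xi\circ\sigma)(t)=\frac{d}{dt^+}(b_\xi\circ\rho\circ\sigma)(t)=-\vec u\cdot d\rho_p(\vec v)=-\cos\theta=-\cos\angle_{\sigma(t)}(\eta,\xi).
\]
The argument is uniform in whether $\eta$ is an ideal point or a genuine point of $\mathcal I$, since only the germ of $\sigma$ at $p$ intervenes.

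I expect the main obstacle to be precisely the mismatch between the local germ $\vec u$ of $\xi$ at $p$ and the global point $\xi$: a single apartment containing both the germ of $\sigma$ and the full ray $[p,\xi)$ need not exist, which is exactly why the $b_\xi$‑preserving retraction is essential (equivalently, one could apply the CAT(0) first‑variation formula to each $x\mapsto d(x,\rho(a))$ and then interchange $\lim_{s\to0^+}$ with $\lim_{a\to\infty}$, the interchange being justified by the convexity of the difference quotients). The two delicate verifications are that $\rho$ genuinely preserves $b_\xi$ and that $d\rho_p$ preserves the angle to $\vec u$; both rest on the single fact that $\vec u$, equivalently $\xi$, lies in the closure of the center of the retraction.
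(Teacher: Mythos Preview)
Your proof is correct, but it is more elaborate than what the paper does, and the extra work stems from a concern that turns out to be unfounded. You write that ``the geodesic $\sigma$ and a representative ray of $\xi$ need not lie in a common apartment,'' and then build a retraction argument to circumvent this. In fact such a common apartment always exists: the forward germ of $\sigma$ at $p=\sigma(t)$ lies in the closure of some alcove $c$ containing $p$, and $\xi$ lies in the closure of some ideal chamber, i.e.\ a germ of quarter $\mathfrak q$; by the Iwasawa property recalled in the paper (``un germe de quartier et une alc\^ove sont toujours contenus dans un m\^eme appartement''), there is an apartment $A'$ containing both $c$ and $\mathfrak q$. Then $p\in\overline c\subset A'$ and $\xi\in A'_\infty$, so the unique ray $[p,\xi)$ lies in $A'$, together with the germ of $\sigma$ at $p$. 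This is exactly the apartment the paper invokes in its one-line proof, after which Example~\ref{exBuse} gives the derivative immediately.

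Your retraction argument is in essence a repackaging of this existence statement: the apartment $B\supset\mathfrak q\cup\{x\}$ that you feed into $\rho$, when $x$ is taken in the alcove $c$, \emph{is} the common apartment $A'$ above, and your verifications that $\rho$ preserves $b_\xi$ and that $d\rho_p$ preserves the angle to $\vec u$ amount to saying that the isomorphism $B\to A$ is an isometry fixing the ray. So nothing is wrong, and your two ``delicate verifications'' are handled correctly; but the machinery of $\rho$ and $d\rho_p$ can be dropped once one notices the common apartment directly.
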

\begin{proof}
C'est un exercice facile de g\'eom\'etrie euclidienne dans un appartement contenant le d\'ebut de la g\'eod\'esique de $\sigma(t)$ \`a $\eta$ et la demi-droite $[\sigma(t),\xi[$.
\end{proof}

\begin{lem}
\label{lePente} Soient $\xi,\eta\in\partial_\infty\mathcal I$ et $\sigma$ une demi-droite repr\'esentant $\eta$. La pente asymptotique de $b_\xi$ en $\eta$ est $pente_\xi(\eta)=\lim_{t\to\infty}\;{{b_\xi(\sigma(t))}\over t}$. Elle s'exprime avec la distance de Tits: $pente_\xi(\eta)=-\cos\angle_{Tits}(\xi,\eta)$.
\end{lem}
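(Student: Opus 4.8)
The plan is to integrate the infinitesimal formula of Lemma~\ref{leDeriv} and then let $t\to\infty$ using the asymptotic angle computation of Lemma~\ref{leCalcTits}. First, since $b_\xi$ is $1$-Lipschitz and $\sigma$ is parametrized at unit speed, the function $f=b_\xi\circ\sigma:[0,\infty[\to\mathbb R$ is $1$-Lipschitz, hence absolutely continuous, so it can be recovered by integrating its derivative. Lemma~\ref{leDeriv}, applied to the geodesic $\sigma$ whose ideal endpoint is $\eta$, gives the right derivative
\[
\frac{d}{dt^+}f(t)=-\cos\angle_{\sigma(t)}(\eta,\xi)\qquad(t\geqslant 0),
\]
whence, for every $t>0$,
\[
\frac{b_\xi(\sigma(t))}{t}=\frac{b_\xi(\sigma(0))}{t}-\frac1t\int_0^t\cos\angle_{\sigma(s)}(\eta,\xi)\,ds.
\]

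Next I would identify the limit of the integrand. Interchanging the roles of $\xi$ and $\eta$ in Lemma~\ref{leCalcTits} (legitimate since $\sigma$ represents $\eta$) yields $\lim_{s\to\infty}\angle_{\sigma(s)}(\eta,\xi)=\angle_{Tits}(\eta,\xi)$, which equals $\angle_{Tits}(\xi,\eta)$ by symmetry of the Tits distance. By continuity of the cosine, the integrand $s\mapsto\cos\angle_{\sigma(s)}(\eta,\xi)$ converges to $\cos\angle_{Tits}(\xi,\eta)$. A Cesàro average of a bounded function having a limit has the same limit, so
\[
\frac1t\int_0^t\cos\angle_{\sigma(s)}(\eta,\xi)\,ds\ \xrightarrow[t\to\infty]{}\ \cos\angle_{Tits}(\xi,\eta);
\]
since moreover $b_\xi(\sigma(0))/t\to0$, I would conclude $pente_\xi(\eta)=\lim_{t\to\infty}b_\xi(\sigma(t))/t=-\cos\angle_{Tits}(\xi,\eta)$.

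The one delicate point is the passage from the one-sided derivative to the integral, and I expect this to be the main (if modest) obstacle. I would handle it by invoking convexity: Busemann functions are convex along geodesics in a CAT(0) space, so $f$ is convex, its right derivative $f'_+$ is nondecreasing, and $f(t)-f(0)=\int_0^t f'_+(s)\,ds$ holds. Convexity in fact offers an alternative route: for a convex function whose right derivative tends to a limit $L$, the asymptotic slope $f(t)/t$ tends to the same $L$ (the slope being the supremum of the right derivative), and here $L=-\cos\angle_{Tits}(\xi,\eta)$ by Lemmas~\ref{leDeriv} and~\ref{leCalcTits}. Everything else is routine verification.
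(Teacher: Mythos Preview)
Your argument is correct, but the paper takes a shorter and more geometric route. Instead of integrating Lemma~\ref{leDeriv} and passing to the limit via Lemma~\ref{leCalcTits} and a Ces\`aro average, the paper first observes that changing the representatives $\rho$ of $\xi$ and $\sigma$ of $\eta$ only perturbs $b_\xi(\sigma(t))$ by a bounded amount (a constant for $\rho$, and at most $d(\sigma(t),\sigma'(t))$ for $\sigma$, which is bounded since asymptotic rays stay at bounded distance), hence does not affect the limit of $b_\xi(\sigma(t))/t$. One may therefore replace $\rho$ and $\sigma$ by sub-rays lying in a single apartment (this was recalled in \S\ref{ssseIdeal}), and there the computation is the elementary Euclidean one of Example~\ref{exBuse}: $b_\xi(\sigma(t))-b_\xi(\sigma(0))=-t\cos\angle(\vec\eta,\vec\xi)$, the angle being precisely the Tits angle.

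Your approach has the merit of being purely CAT(0): it uses only Lemmas~\ref{leDeriv} and~\ref{leCalcTits} and the convexity of Busemann functions, never the existence of a common apartment. The paper's approach trades that generality for brevity by exploiting the building structure. Your remark that convexity of $f=b_\xi\circ\sigma$ already forces $f(t)/t\to\sup_t f'_+(t)=\lim_t f'_+(t)$ is a nice shortcut that makes the Ces\`aro step unnecessary.
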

\begin{proof}
Si $\rho$ est une demi-droite repr\'esentant $\xi$, on v\'erifie que ${{b_\xi(\sigma(t))}\over t}$ ne d\'epend asymptotiquement pas des choix des repr\'esentants $\rho$ et $\sigma$. On peut alors raisonner dans un appartement contenant ces deux g\'eod\'esiques.
\end{proof}

\subsubsection{Configurations pond\'er\'ees et stabilit\'e}\label{ssseConf}

On consid\`ere trois points $\xi_1,\xi_2,\xi_3\in\partial_\infty\mathcal I$ et trois poids $m_1,m_2,m_3\in[0,+\infty[$. On les voit comme une configuration pond\'er\'ee $\psi:(\mathbb Z/3\mathbb Z,\nu)\to\partial_\infty\mathcal I$, o\`u $\nu$ est la mesure sur $\mathbb Z/3\mathbb Z$ de masse $m_i$ en $i$.

La mesure associ\'ee sur $\partial_\infty\mathcal I$ est $\mu=\psi_*\nu=\sum_i\;m_i\delta_{\xi_i}$, de poids total $\vert\mu\vert=m_1+m_2+m_3$. On d\'efinit sa pente: $pente_\mu=-\sum_{i\in\mathbb Z/3\mathbb Z}\;m_i\cos\angle_{Tits}(\xi_i,-)$, c'est une fonction sur $\partial_\infty\mathcal I$.

\begin{dfn}\label{defSS} La mesure $\mu$ et la configuration $\psi$ sont dits semi-stables si la fonction $pente_\mu$ est positive ou nulle sur $\partial_\infty\mathcal I$.
\end{dfn}

Intuitivement cela signifie que les points $\xi_1,\xi_2,\xi_3$ sont \'eloign\'es les uns des autres. Par exemple si $\xi_1=\xi_2=\xi_3$ aucune configuration n'est semi-stable; dans un arbre, si $\xi_1,\xi_2,\xi_3$ sont deux \`a deux diff\'erents (resp. $\xi_1=\xi_2\not=\xi_3$) la configuration est semi-stable si et seulement si $2m_i\leq{}m_1+m_2+m_3$, $\forall i$ (resp. $m_3=m_1+m_2$).

Bien s\^ur une configuration semi-stable le reste si on multiplie tous ses poids par un m\^eme r\'eel positif: les configurations semi-stables forment un c\^one (satur\'e).

La pente peut se r\'einterpr\'eter avec les fonctions de Busemann:
la fonction de Busemann pond\'er\'ee associ\'ee \`a $\mu$ ou $\psi$ est $b_\mu=\sum_i\;m_ib_{\xi_i}$, elle est d\'efinie \`a une constante pr\`es. Si $\eta\in\partial_\infty\mathcal I$ est repr\'esent\'e par $\sigma$, alors le lemme \ref{lePente} dit que : $pente_\mu(\eta)=\lim_{t\to\infty}\;{{b_\mu(\sigma(t))}\over t}$.

\subsubsection{Application de Gauss}\label{ssseGauss}

Soit $T=T(x_1,x_2,x_3)$ un triangle dans $\mathcal I$. On prolonge chaque segment $[x_{i-1},x_i]$ en une demi-droite d'origine $x_{i-1}$, d'extr\'emit\'e not\'ee $\xi_i\in\partial_\infty\mathcal I$ et on consid\`ere les poids $m_i=d(x_{i-1},x_i)$. L'application de Gauss $\psi$ associe donc au triangle $T$ une configuration pond\'er\'ee $\psi_T$ (bien s\^ur il y a plusieurs choix pour $\psi_T$).

\begin{prop}\label{prConfSS}
La configuration $\psi_T$ est semi-stable.
\end{prop}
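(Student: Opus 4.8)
Le plan est d'\'etablir directement l'in\'egalit\'e $pente_\mu(\eta)\geqslant 0$ pour tout $\eta\in\partial_\infty\mathcal I$, en ramenant la somme des trois fonctions de Busemann \`a une seule. On part de la d\'efinition $pente_\mu(\eta)=-\sum_i m_i\cos\angle_{Tits}(\xi_i,\eta)=\sum_i m_i\,pente_{\xi_i}(\eta)$ (Lemme \ref{lePente}). Comme la distance de Tits est sym\'etrique, $\cos\angle_{Tits}(\xi_i,\eta)=\cos\angle_{Tits}(\eta,\xi_i)$, donc $pente_{\xi_i}(\eta)=pente_\eta(\xi_i)$ : on peut r\'einterpr\'eter chaque terme comme la pente asymptotique de l'unique fonction de Busemann $b_\eta$ dans la direction $\xi_i$.

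Ensuite, je choisirais comme repr\'esentant de $\xi_i$ la demi-droite $\rho_i$ qui prolonge le c\^ot\'e $[x_{i-1},x_i]$, d'origine $\rho_i(0)=x_{i-1}$ et parcourue \`a vitesse $1$, de sorte que $\rho_i(m_i)=x_i$. Puisque $\mathcal I$ est CAT(0), la fonction $s\mapsto b_\eta(\rho_i(s))$ est convexe ; la pente de sa corde sur $[0,m_i]$ est donc major\'ee par sa pente asymptotique, qui vaut $pente_\eta(\xi_i)$ et ne d\'epend pas du repr\'esentant choisi (cf. la preuve du Lemme \ref{lePente}). Cela donne, pour chaque $i$,
$$
\frac{b_\eta(x_i)-b_\eta(x_{i-1})}{m_i}\leqslant pente_\eta(\xi_i).
$$

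Il ne reste qu'\`a multiplier par $m_i\geqslant 0$ et \`a sommer sur $i\in\mathbb Z/3\mathbb Z$ (avec $x_0=x_3$) : le membre de droite redonne $pente_\mu(\eta)$, tandis que le membre de gauche se t\'elescope le long du bord du triangle,
$$
pente_\mu(\eta)=\sum_i m_i\,pente_\eta(\xi_i)\geqslant\sum_i\big(b_\eta(x_i)-b_\eta(x_{i-1})\big)=0,
$$
ce qui est exactement la semi-stabilit\'e. L'argument s'\'etend d'ailleurs mot pour mot \`a un polygone g\'eod\'esique quelconque, et les c\^ot\'es de longueur nulle se traitent en les omettant sans affecter le t\'elescopage.

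Le point d\'elicat n'est pas calculatoire : c'est le bon angle d'attaque, \`a savoir l'\'echange des r\^oles de $\xi_i$ et de $\eta$ gr\^ace \`a la sym\'etrie de $\angle_{Tits}$, qui transforme une somme de trois pentes en une seule fonction convexe dont les accroissements se t\'elescopent. Une fois ce choix fait, seules restent \`a invoquer la convexit\'e des fonctions de Busemann sur un espace CAT(0) et l'ind\'ependance de la pente asymptotique vis-\`a-vis du repr\'esentant, toutes deux d\'ej\`a acquises ; aucune estimation d'angle fine n'est n\'ecessaire.
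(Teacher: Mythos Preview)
Your argument is correct and follows essentially the same route as the paper: both bound $b_\eta(x_i)-b_\eta(x_{i-1})$ by $-m_i\cos\angle_{Tits}(\xi_i,\eta)$ and then telescope around the triangle. The only difference is in how this key inequality is justified: the paper computes the directional derivative ${d\over dt^+}(b_\eta\circ\gamma_i)(t)=-\cos\angle_{\gamma_i(t)}(\eta,\xi_i)$ via Lemma~\ref{leDeriv} and then invokes the pointwise inequality $\angle_x\leq\angle_{Tits}$ before integrating, whereas you use the convexity of $b_\eta$ along the geodesic ray $\rho_i$ to compare the chord slope on $[0,m_i]$ directly with the asymptotic slope given by Lemma~\ref{lePente}. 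Your variant is slightly more economical in that it bypasses Lemma~\ref{leDeriv} entirely, at the cost of invoking the convexity of Busemann functions along geodesics in a CAT(0) space --- a standard fact, though not stated explicitly in the text.
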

\begin{proof} Soient $\eta\in\partial_\infty\mathcal I$ et $\gamma_i:[0,m_i]\to[x_{i-1},x_i]$ une param\'etrisation \`a vitesse $1$ de ce segment. D'apr\`es le lemme \ref{leDeriv} et \ref{ssseAngleTits} on a : ${d \over{dt^+}}(b_\eta\circ\gamma_i)(t)=-\cos\angle_{\sigma_i(t)}(\eta,\xi_i)\leq{}-\cos\angle_{Tits}(\eta,\xi_i)$. Ainsi $b_\eta(x_i)-b_\eta(x_{i-1})=\int_0^{m_i}\;{d \over{dt^+}}(b_\eta\circ\gamma_i)(t)dt\leq{}\int_0^{m_i}\;-\cos\angle_{Tits}(\eta,\xi_i)={-m_i}\cos\angle_{Tits}(\eta,\xi_i)$  et donc $0\leq{}\sum_{i\in\mathbb Z/3\mathbb Z}{-m_i}\cos\angle_{Tits}(\eta,\xi_i)$.
\end{proof}

\subsection{Des configurations semi-stables aux triangles}\label{sseSSTr}

On cherche \`a r\'ealiser l'\'etape 3 de l'introduction et donc \`a inverser l'application de Gauss ci-dessus en construisant un triangle dans $\mathcal I$ \`a partir d'une configuration semi-stable $\psi=((\xi_1,m_1),(\xi_2,m_2),(\xi_3,m_3))$, cf. \cite{KLM2}.

\subsubsection{Points fixes}\label{sssePtsFix}

Soient $\xi\in\partial_\infty\mathcal I$ et $t\geq{}0$. On d\'efinit l'application $\phi_{\xi,t}:\mathcal I\to\mathcal I,x\mapsto\rho(t)$ o\`u $\rho$ est le repr\'esentant de $\xi$ issu de $x$. La fonction $t\mapsto d(\phi_{\xi,t}(x),\phi_{\xi,t}(y))$ est convexe, born\'ee et d\'ecroissante.

Si $\psi=((\xi_1,m_1),(\xi_2,m_2),(\xi_3,m_3))$ est une configuration pond\'er\'ee, on lui associe l'application $\phi=\phi_{\xi_3,m_3}\circ\phi_{\xi_2,m_2}\circ\phi_{\xi_1,m_1}$ de $\mathcal I$ dans $\mathcal I$. Par construction $d(x,\phi x)\leq{}\vert\mu\vert=m_1+m_2+m_3$ et $d(\phi(x),\phi(y))\leq{}d(x,y)$.

On cherche un point fixe $x_0$ de $\phi$ qui d\'efinirait un triangle $T=T(x_0,x_1,x_2)$ (avec $x_1=\phi_{\xi_1,m_1}(x_0)$ et $x_2=\phi_{\xi_2,m_2}(x_1)$) v\'erifiant $\psi_T=\psi$.

Pour cela on va utiliser une variante du lemme de point fixe de Bruhat-Tits \cite{KLM2}:

\begin{lem}\label{lePtFix} Soit $\phi:\mathcal I\to\mathcal I$ une application $1-$Lipschitzienne d'un espace CAT(0) complet dans lui m\^eme. S'il existe $x\in\mathcal I$ tel que $\{\phi^n(x)\mid n\geq{}0\}$ est born\'e, alors $\phi$ a un point fixe dans $\mathcal I$.
\end{lem}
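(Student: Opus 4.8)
Le plan est de construire un point fixe comme \emph{centre asymptotique} de l'orbite, en adaptant l'argument classique de Bruhat--Tits (qui, lui, traite d'un groupe d'isom\'etries pr\'eservant un born\'e, alors qu'ici on ne dispose que d'une seule application $1$-lipschitzienne). On posera $x_n=\phi^n(x)$ et, l'orbite \'etant born\'ee, on introduira la fonction
$$
R(y)=\limsup_{n\to\infty} d(y,x_n)^2 .
$$
Comme $\sup_n d(x,x_n)=:D<\infty$, on a $d(y,x_n)\leq d(y,x)+D$, donc $R$ est finie en tout point; chaque $d(\cdot,x_n)$ \'etant $1$-lipschitzienne, $R$ sera continue (localement lipschitzienne sur tout born\'e). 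On notera $R_\infty=\inf_{\mathcal I}R$.

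L'\'etape centrale --- et c'est l\`a que se concentrera toute la difficult\'e, exactement comme dans le lemme de Bruhat--Tits --- consistera \`a montrer que $R$ atteint son infimum en un \emph{unique} point $c$. Pour l'existence comme pour l'unicit\'e, on exploitera la convexit\'e uniforme de $\mathcal I$: dans un espace CAT(0), si $m$ est le milieu de $[c,c']$, l'in\'egalit\'e de comparaison (in\'egalit\'e CN) donne, pour chaque $n$,
$$
d(m,x_n)^2\leq\tfrac12 d(c,x_n)^2+\tfrac12 d(c',x_n)^2-\tfrac14 d(c,c')^2 ,
$$
d'o\`u, en passant \`a la limite sup\'erieure,
$$
R(m)\leq\tfrac12 R(c)+\tfrac12 R(c')-\tfrac14 d(c,c')^2 .
$$
Appliqu\'ee aux milieux d'une suite minimisante $(y_k)$, cette in\'egalit\'e forcera $d(y_j,y_k)^2\leq 2\big(R(y_j)+R(y_k)\big)-4R_\infty\to0$, donc $(y_k)$ sera de Cauchy; par compl\'etude de $\mathcal I$ et continuit\'e de $R$, elle convergera vers un point $c$ r\'ealisant $R(c)=R_\infty$. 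Appliqu\'ee ensuite \`a deux minimiseurs $c,c'$, la m\^eme in\'egalit\'e donnera $R_\infty\leq R_\infty-\tfrac14 d(c,c')^2$, d'o\`u $c=c'$: le centre asymptotique est unique.

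Il ne restera plus qu'\`a v\'erifier que ce centre $c$ est fixe, ce qui sera imm\'ediat gr\^ace au caract\`ere $1$-lipschitzien de $\phi$ et \`a un simple d\'ecalage d'indice. En effet, pour $n\geq1$ on aura
$$
d(\phi(c),x_n)=d\big(\phi(c),\phi(x_{n-1})\big)\leq d(c,x_{n-1}) ,
$$
et donc $R(\phi(c))=\limsup_n d(\phi(c),x_n)^2\leq\limsup_n d(c,x_{n-1})^2=R(c)=R_\infty$. Puisque $R_\infty$ est le minimum de $R$ et que $c$ en est l'unique point de r\'ealisation, on en d\'eduira $R(\phi(c))=R_\infty$, puis $\phi(c)=c$; $c$ sera le point fixe cherch\'e. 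On remarquera que l'hypoth\`ese de bornitude de $\{\phi^n(x)\}$ ne sert qu'\`a garantir la finitude de $R$: tout le poids de la preuve porte sur l'existence et l'unicit\'e du centre asymptotique, c'est-\`a-dire sur la convexit\'e uniforme de l'espace CAT(0).
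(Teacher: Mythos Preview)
Your proof is correct and follows essentially the same strategy as the paper's: both introduce the asymptotic-radius function of the orbit $\{x_n\}$, show via the CN inequality that any minimizing sequence is Cauchy (hence a unique minimizer exists by completeness), and conclude by observing that the shift $x_n\mapsto x_{n+1}$ combined with the $1$-Lipschitz property forces the minimizer to be fixed. The only cosmetic difference is that you work with $R(y)=\limsup d(y,x_n)^2$ while the paper uses $r(y)=\limsup d(y,x_n)$; your squared version makes the passage to the $\limsup$ in the CN inequality a one-liner (subadditivity of $\limsup$), whereas the paper carries out an explicit $\epsilon$-argument to reach the same Cauchy estimate.
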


\begin{proof} On pose $x_n=\phi^n(x)$ et, pour $y\in\mathcal I$, $r(y)=\limsup_{n\to\infty}\;d(x_n,y)$. Alors $r(\phi y)=\limsup_{n\to\infty}\;d(x_n,\phi y)=\limsup_{n\to\infty}\;d(\phi x_{n-1},\phi y)\leq{}\limsup_{n\to\infty}\;d(x_{n-1},y)=r(y)$; donc $r\circ\phi\leq{}r$. Il suffit donc de montrer que $r$ a un unique minimum dans $\mathcal I$.

On note $\rho=\inf_{\mathcal I}(r)$. Pour $\epsilon>0$ et $y,y'\in\mathcal I$ tels que $r(y),r(y')<\rho+\epsilon$, il existe $n_0$ tel que $d(x_n,y),d(x_n,y')<\rho+\epsilon$, $\forall n\geq{}n_0$. Si $m$ est le milieu du segment $[y,y']$ on a $r(m)\geq{}\rho$, donc $d(x_n,m)>\rho-\epsilon$ pour une infinit\'e de $n\in\mathbb N$. L'in\'egalit\'e (CN) de \cite[3.2.1]{BT} (cons\'equence de la condition CAT(0)) s'\'ecrit alors, pour ces entiers $n$ : $d(y,y')^2\leq{}2(d(x_n,y)^2+d(x_n,y')^2)-4d(x_n,m)^2<16\rho\epsilon$. Ainsi une suite $y_n\in\mathcal I$ telle que $r(y_n)$ tende vers $\rho$ est forc\'ement de Cauchy. Comme $\mathcal I$ est complet, cela montre l'existence et l'unicit\'e d'un $y\in\mathcal I$ tel que $r(y)=\rho$.
\end{proof}

\subsubsection{Le c\^one sur l'immeuble \`a l'infini}\label{ssseCone}

L'immeuble sph\'erique $I_\infty$ de r\'ealisation g\'eom\'etrique sph\'erique $\partial_\infty\mathcal I$ a aussi une r\'ealisation vectorielle (dans le m\^eme sens qu'en \ref{ssseTang}) : c'est le c\^one
$C\partial_\infty\mathcal I$ quotient de $[0,+\infty[\times\partial_\infty\mathcal I$ par la relation qui identifie $\{0\}\times\partial_\infty\mathcal I$ \`a un seul point not\'e $0$. C'est un espace m\'etrique pour la distance $d_C((a,\xi),(b,\eta))^2=a^2+b^2-2ab\cos\angle_{Tits}(\xi,\eta)$.

On montre \cite[p. 60, 61, 188]{BH} que $C\partial_\infty\mathcal I$ est complet, que deux points sont toujours joints par un unique segment g\'eod\'esique et qu'il v\'erifie la condition CAT(0). C'est un immeuble vectoriel.

En fait dans notre cas $C\partial_\infty\mathcal I$ est l'immeuble de Tits d'un groupe r\'eductif, dans sa r\'ealisation vectorielle et on sait qu'il a toutes les propri\'et\'es des immeubles affines plus une, qui caract\'erise les immeubles vectoriels : les appartements sont des espaces vectoriels euclidiens d'origine $0$ (commune \`a tous les appartements).

On peut remplacer, dans ce qui pr\'ec\`ede, $\mathcal I$ par $C\partial_\infty\mathcal I$ et donc consid\'erer $\phi$ sur ce dernier immeuble.

\begin{thm}\label{thFixe} Si la configuration $\psi$ est semi-stable, alors l'application $\phi$ admet un point fixe dans $C\partial_\infty\mathcal I$.
\end{thm}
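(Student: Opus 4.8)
The plan is to apply the fixed point criterion of Lemma~\ref{lePtFix}, working throughout in the complete CAT(0) space $\mathcal C=C\partial_\infty\mathcal I$ of \ref{ssseCone} (whose ideal boundary is again $I_\infty$, with the same Tits metric and the same Busemann functions). The map $\phi=\phi_{\xi_3,m_3}\circ\phi_{\xi_2,m_2}\circ\phi_{\xi_1,m_1}$ is $1$-Lipschitz, being a composition of the $1$-Lipschitz maps $\phi_{\xi_i,m_i}$. By Lemma~\ref{lePtFix} it therefore suffices to produce a single point whose forward $\phi$-orbit is bounded; I take the cone point and set $y_n=\phi^n(0)$, $r_n=d_C(0,y_n)$. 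The whole problem is thus reduced to proving that $\{y_n\}$ is bounded, and the role of semi-stability (Definition~\ref{defSS}) is precisely to forbid the orbit from escaping to infinity.

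The computational engine is the Busemann estimate already used for Proposition~\ref{prConfSS}, transported to $\mathcal C$. Fix $\eta\in\partial_\infty\mathcal C$. Decomposing one application of $\phi$ into its three elementary moves and integrating Lemma~\ref{leDeriv} along each of them, together with the inequality $\angle_x\le\angle_{Tits}$ recalled in \ref{ssseAngleTits}, gives for every $x$
$$b_\eta(\phi x)-b_\eta(x)\ \le\ -\sum_{i} m_i\cos\angle_{Tits}(\xi_i,\eta)\ =\ pente_\mu(\eta).$$
This alone is not enough, since semi-stability makes the right-hand side nonnegative. The extra ingredient is that this bound becomes an equality in the limit at infinity: if the base points recede to infinity toward a fixed ideal point $\eta$, then by Lemma~\ref{leCalcTits} the angles $\angle_{\,\cdot\,}(\xi_i,\eta)$ appearing in Lemma~\ref{leDeriv} converge to the Tits angles $\angle_{Tits}(\xi_i,\eta)$, so that the per-step increment $b_\eta(y_{n+1})-b_\eta(y_n)$ tends to $pente_\mu(\eta)$.

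Now suppose, for contradiction, that $\{y_n\}$ is unbounded. From $d_C(y_n,y_{n+m})=d_C(\phi^n 0,\phi^n(y_m))\le d_C(0,y_m)$ one obtains the subadditivity $r_{n+m}\le r_n+r_m$, so the drift $\ell=\lim_n r_n/n=\inf_n r_n/n$ exists and is $\ge 0$. Using the ray-approximation theory for $1$-Lipschitz maps of complete CAT(0) spaces (cf.\ \cite{BH}), the orbit admits an escape direction $\eta_\infty\in\partial_\infty\mathcal C$: a geodesic ray from $0$ toward $\eta_\infty$ tracked by the $y_n$, along which $b_{\eta_\infty}$ decreases at rate exactly $-\ell$. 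On the other hand, since the orbit recedes to infinity toward $\eta_\infty$, the previous paragraph identifies the same asymptotic slope of $b_{\eta_\infty}$ along the orbit with $pente_\mu(\eta_\infty)$. Equating the two computations yields $pente_\mu(\eta_\infty)=-\ell\le 0$; as the orbit genuinely escapes one has $\ell>0$ and hence $pente_\mu(\eta_\infty)<0$, contradicting the semi-stability of $\psi$. Thus the orbit is bounded and $\phi$ has a fixed point.

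I expect the main obstacle to be the construction and control of the escape direction $\eta_\infty$. Unlike in the locally finite case, $\partial_\infty\mathcal I$ is not compact here (Theorem~\ref{thInfiniComplet} only provides completeness for $\angle_{Tits}$), so $\eta_\infty$ cannot be extracted by a naive compactness argument and must be produced through the convexity and drift machinery of CAT(0) geometry, showing in particular that the directions of the rays $[0,y_n)$ form a Cauchy sequence for $\angle_{Tits}$. The genuinely delicate point is the degenerate \emph{parabolic} regime $\ell=0$ (an unbounded orbit escaping sublinearly), in which the slope identification above only yields $pente_\mu(\eta_\infty)=0$ and no immediate contradiction; ruling this case out demands a finer analysis of how the orbit can accumulate on the boundary, and this is where most of the care will be required.
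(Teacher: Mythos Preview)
Your approach is genuinely different from the paper's, and the difference is instructive. The paper never analyses the orbit dynamics or constructs an escape direction; instead it exploits the \emph{conical} structure of $\mathcal C=C\partial_\infty\mathcal I$ to build, directly, a bounded $\phi$-invariant set. Two facts specific to the cone are used. First, in $\mathcal C$ any two facets lie in a common apartment, so for $x$ deep enough in a star $F^*$ (precisely, $B(x,|\mu|)\subset F^*$) each $\phi_{\xi_i,m_i}$ acts as a genuine Euclidean translation and the Busemann increment is an \emph{equality}: $b_\eta(\phi x)-b_\eta(x)=pente_\mu(\eta)$ for every $\eta\in F^\infty$. Second, the paper chooses a clever finite subset $D\subset Q^\infty$ (barycenters plus a controlled refinement on the faces), lifts it to $E\subset\partial_\infty\mathcal I$ by types, and shows that the polyhedral function $f=\max_{\zeta\in E}(-b_\zeta)$ controls $d_C(0,\cdot)$ and satisfies $f(\phi x)\le f(x)$ once $d_C(0,x)$ is large. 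The sublevel set $B_f(r)$ is then the bounded $\phi$-stable set fed into Lemma~\ref{lePtFix}.

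The gaps you flag in your own argument are exactly the ones the paper's route avoids. You need an escape direction $\eta_\infty$ without boundary compactness, and you need to exclude the parabolic case $\ell=0$; neither is handled, and for general $1$-Lipschitz self-maps of complete CAT(0) spaces there is no ready-made theorem in \cite{BH} producing a ray tracked by an unbounded sublinear orbit. Your asymptotic identification $b_{\eta_\infty}(y_{n+1})-b_{\eta_\infty}(y_n)\to pente_\mu(\eta_\infty)$ also presupposes that the $y_n$ recede to infinity \emph{in the direction $\eta_\infty$} in a sense strong enough to make $\angle_{y_n}(\xi_i,\eta_\infty)\to\angle_{Tits}(\xi_i,\eta_\infty)$ via Lemma~\ref{leCalcTits}; this circularly requires the very tracking you have not established. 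The paper's finite-directions trick sidesteps all of this: it needs no limit direction, no drift analysis, and the equality (not mere inequality) of the Busemann increment inside $F^{*\circ}$ turns semi-stability directly into monotonicity of $f$ far from the origin.
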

\begin{proof} \cite[prop. 4.5]{KLM2} Nous sommes dans un immeuble vectoriel que l'on notera $\mathcal I^v$, ses facettes sont des faces de quartier de sommet $0$ et elles correspondent bijectivement (par $F\mapsto F^\infty$) aux facettes de $\partial_\infty\mathcal I=\partial_\infty\mathcal I^v$. On normalise toutes les fonctions de Busemann de fa\c{c}on que $b_\xi(0)=0$. Comme deux facettes sont toujours dans un m\^eme appartement, on peut appliquer le calcul de l'exemple \ref{exBuse} \`a $0$, $x\in\mathcal I^v$ et $\xi\in\partial_\infty\mathcal I$, ainsi $b_\xi(x)=-d(0,x)\cos\angle_0(x,\xi)$. En particulier $\vert b_\xi(x)\vert\leq{}d(0,x)$ et $d(0,x)=\max_{\xi\in\partial_\infty\mathcal I}\;(-b_\xi(x))$.

Si $F$ est une facette de $\mathcal I^v$, on note $F^*$ son \'etoile c'est \`a dire la r\'eunion des facettes (ferm\'ees) contenant $F$. Si $x\in F^*$, $\eta\in F^\infty$ et $\xi\in\partial_\infty\mathcal I$, alors $x$, $\eta$ et $\xi$ sont dans un m\^eme appartement et $\phi_{\xi,t}(x)$ est le translat\'e de $x$ d'une longueur $t$ dans la direction $\xi$.
On a donc $b_\eta(\phi_{\xi,t}(x))-b_\eta(x)=-t\cos\angle_{Tits}(\xi,\eta)$ d'apr\`es \ref{exBuse}.
On d\'efinit l'ensemble $F^{*\circ}$ des $x\in F^*$ tels que la boule $B(x,\vert\mu\vert)$ de $\mathcal I^v$ soit contenue dans $F^*$.
Ainsi, pour $x\in F^{*\circ}$, $\phi_{\xi_1,t}(x)$ (pour $0\leq{}t\leq{}m_1$), $\phi_{\xi_2,t}(\phi_{\xi_1,m_1}(x))$ (pour $0\leq{}t\leq{}m_2$) et $\phi_{\xi_3,t}(\phi_{\xi_2,m_2}(\phi_{\xi_1,m_1}(x)))$ (pour $0\leq{}t\leq{}m_3$) restent dans $F^*$ et on a donc: pour tous $x\in F^{*\circ}$, $\eta\in F^\infty$, $b_\eta(\phi(x))-b_\eta(x)=-\sum\;m_i\cos\angle_{Tits}(\xi_i,\eta)=pente_\mu(\eta)$. Comme $\mu$ est semi-stable, on en d\'eduit que $b_\eta(\phi(x))\geq{}b_\eta(x)$.

Pour appliquer le lemme \ref{lePtFix} on veut montrer que $\phi$ stabilise un born\'e; celui-ci sera l'approximation poly\'edrique d'une boule de centre $0$ que l'on va construire maintenant.

Soit $Q$ un quartier de sommet $0$ dans $\mathcal I^v$, on va construire un sous-ensemble fini $D$ de $Q^\infty$. Pour cela on met d'abord dans $D$ le barycentre du simplexe $Q^\infty$, puis on ajoute successivement des points des faces de $Q^\infty$ (diff\'erentes de $Q^\infty$) ordonn\'ees de fa\c{c}on que la dimension d\'ecroisse: si $F^\infty$ est une telle face, on rajoute au $D_-(F^\infty)$ d\'ej\`a construit un ensemble fini $D(F^\infty)$ de points de l'int\'erieur relatif de $F^\infty$ tel que $F^\infty$ soit recouvert par les boules ouvertes (pour la distance de Tits) de centres ces points de $D(F^\infty)$ et de rayon $(1/3).d(F^\infty,D_-(F^\infty))$. On note $\epsilon(F^\infty)$ la distance de $F^\infty$ au compl\'ementaire de la r\'eunion de ces boules.

Alors le $D$ ainsi construit et $\epsilon=(1/2)\inf\{\epsilon(F^\infty)\mid F^\infty\subset\partial Q^\infty\}$ v\'erifient la condition suivante: si $\eta\in Q^\infty$ et $\zeta\in D$ sont tels que $\angle_{Tits}(\eta,\zeta)\leq{}2\angle_{Tits}(\eta,\zeta')$ pour tous $\zeta'\in D$, alors $\eta$ est \`a distance de Tits $>\epsilon$ de toute face de $Q^\infty$ ne contenant pas $\zeta$.

On consid\`ere l'ensemble $E$ des points de $\partial_\infty\mathcal I$ d'image dans $D$ par la projection de $\partial_\infty\mathcal I$ sur $Q^\infty$ d\'etermin\'ee par les types. Alors la condition de l'alin\'ea pr\'ec\'edent est encore v\'erifi\'ee si l'on change $Q^\infty$ en $\partial_\infty\mathcal I$ et $D$ en $E$.

On consid\`ere la fonction $f=\max_{\zeta\in E}(-b_\zeta)$. On a vu que $\vert f\vert\leq{}d(0,-)$; de plus les boules poly\'edriques $B_f(r)=\{x\in\mathcal I^v\mid f(x)\leq{}r\}$ sont born\'ees: cela se v\'erifie dans le quartier $Q$ et $\{x\in Q\mid -b_\zeta(x)\leq{}r,\;\forall\zeta\in D\}$ est born\'e d'apr\`es l'exemple \ref{exBuse}. Il ne reste donc plus qu'\`a montrer que $\phi$ stabilise $B_f(r)$ pour $r$ assez grand.

Soient $r>0$ et $x\in \mathcal I^v$ tels que $d(0,x)>r$. Montrons que $\forall\zeta\in E$, $-b_\zeta(\phi(x))\leq{}f(x)$.

$-$ Si $\angle_0(x,\zeta)\leq{}2\angle_0(x,\zeta')$, $\forall\zeta'\in E$ et si $F^\infty_\zeta$ est la facette contenant $\zeta$ dans son int\'erieur relatif, alors $x\in F_\zeta^{*\circ}$ pour $r\geq{}\vert\mu\vert/\sin\epsilon$. On a alors  $-b_\zeta(\phi(x))\leq{}-b_\eta(x)\leq{}f(x)$.

$-$ Si $p=\angle_0(x,\zeta)>2\angle_0(x,\zeta')=2q$ pour un $\zeta'\in E$, on peut supposer $\angle_0(x,\zeta')=\min\{\angle_0(x,\zeta'')\mid\zeta''\in E\}$. Alors $\angle_{Tits}(\zeta,\zeta')\leq{}p+q\leq{}3(p-q)$ et, si on note $\theta=\inf\{\angle_{Tits}(\xi,\eta)\mid\xi\not=\eta\in E\}= \inf\{\angle_{Tits}(\xi,\eta)\mid\xi\not=\eta\in D\}>0$ on a $\theta\leq{}p+q\leq{}3(p-q)$. Alors $f(x)=-b_{\zeta'}(x)=-b_{\zeta}(x)+d(0,x)(\cos q-\cos p)=-b_{\zeta}(x)+2d(0,x).\sin{{p+q}\over2}.\sin{{p-q}\over2}\geq{}-b_{\zeta}(x)+\vert\mu\vert$, si $2r\inf\{\sin(\theta/2),\sin(3\pi/4)\}.\sin(\theta/6)\geq{}\vert\mu\vert$. Et alors, comme $b_\zeta$ est $1-$Lipschitzienne et $\phi$ de d\'eplacement au plus $\vert\mu\vert$, on a $-b_\zeta(\phi(x))\leq{}-b_\zeta(x)+\vert\mu\vert\leq{}f(x)$.

On a donc $f(\phi(x))\leq{}f(x)$ si $d(0,x)>r$ avec $r>r_0$ (assez grand). Mais $\phi(B(0,r))\subset B(0,r+\vert\mu\vert)\subset B_f(r+\vert\mu\vert)$. Donc $B_f(r)$ est stable par $\phi$ pour $r>r_0+\vert\mu\vert$.
\end{proof}

\subsubsection{L'argument de transfert}\label{ssseTrans}

Nous venons de trouver un point fixe de $\phi$ dans l'immeuble vectoriel $\mathcal I^v$ correspondant \`a $\partial_\infty\mathcal I$. On a donc dans $\mathcal I^v$ un triangle de longueurs (num\'eriques) de c\^ot\'es $m_1,m_2,m_3$ et de directions de c\^ot\'es $\xi_1,\xi_2,\xi_3\in\partial_\infty\mathcal I$.

Identifions $C^v$ \`a un quartier $Q$ d'origine $0$ de $\mathcal I^v$ et consid\'erons les \'el\'ements $\lambda,\mu,\nu$ de $C^v$ de longueurs num\'eriques  respectives $m_1,m_2,m_3$ et de directions respectives les images de $\xi_1,\xi_2,\xi_3$ dans $Q^\infty$ par la projection de $\partial_\infty\mathcal I$ sur $Q^\infty$. On dira que $\lambda=pr_{C^v}(\xi_1,m_1)$ et, de m\^eme,  $\mu=pr_{C^v}(\xi_2,m_2)$,  $\nu=pr_{C^v}(\xi_3,m_3)$. Ainsi le triangle ci-dessus a pour longueurs de c\^ot\'es $\lambda$, $\mu$ et $\nu$.

On est dans le cadre du corollaire \ref{corSatTrip} et le dernier alin\'ea de celui-ci nous dit qu'il existe dans $\mathcal I$ un triangle de m\^eme longueurs de c\^ot\'es. En effet l'appartement t\'emoin $\mathbb A^v$ de $\mathcal I^v$ s'identifie, avec ses murs et son groupe de Weyl $W^v$, \`a l'appartement t\'emoin $\mathbb A$ de $\mathcal I$, si l'on ne garde dans ce dernier que les murs passant par un sommet sp\'ecial donn\'e.

Cela signifie aussi que $\phi$ a un point fixe dans $\mathcal I$.

\begin{prop}\label{prSat}
Le c\^one $\mathcal T$ du corollaire \ref{corSatTrip} est satur\'e dans $(P^{\vee+})^3$, i.e., s'il existe $(\lambda,\mu,\nu)\in (P^{\vee+})^3$ et $N\in\mathbb N^*$ tels que $(N\lambda,N\mu,N\nu)\in \mathcal T$, alors $(\lambda,\mu,\nu)\in \mathcal T$.
\end{prop}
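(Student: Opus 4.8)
The plan is to run the machinery of Sections~\ref{seDepli} and~\ref{seGauss} in reverse: starting from a triangle with sides $(N\lambda,N\mu,N\nu)$ I pass to a semi-stable configuration via the Gauss map, rescale its weights by $1/N$, recover a fixed point of the associated map $\phi$ in the vector building $C\partial_\infty\mathcal I$, and transfer the resulting triangle back into $\mathcal I$ using the last assertion of Corollary~\ref{corSatTrip}. Thus the proposition is essentially an assembly of results already established; the one analytic ingredient that does the real work, Theorem~\ref{thFixe}, is available for free.

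Concretely, I would suppose $(N\lambda,N\mu,N\nu)\in\mathcal T$, so that there is a triangle $T=T(x_0,x_1,x_2)$ in $\mathcal I$ with $d_{C^v}(x_0,x_1)=N\lambda$, $d_{C^v}(x_1,x_2)=N\mu$ and $d_{C^v}(x_2,x_0)=N\nu$. Applying the Gauss map of~\ref{ssseGauss} yields ideal directions $\xi_1,\xi_2,\xi_3\in\partial_\infty\mathcal I$ of the sides, together with weights $m_i$ equal to the numerical side lengths; since $\Vert v\Vert=\Vert pr_{C^v}(v)\Vert$, these are $m_1=N\Vert\lambda\Vert$, $m_2=N\Vert\mu\Vert$, $m_3=N\Vert\nu\Vert$. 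By Proposition~\ref{prConfSS} this configuration is semi-stable. Semi-stability concerns only the sign of $pente_\mu=-\sum_i m_i\cos\angle_{Tits}(\xi_i,-)$, so dividing every weight by $N$ preserves it: the rescaled configuration $\psi=((\xi_1,\Vert\lambda\Vert),(\xi_2,\Vert\mu\Vert),(\xi_3,\Vert\nu\Vert))$, with the same three ideal directions, is again semi-stable.

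Next I would apply Theorem~\ref{thFixe} to $\psi$: the map $\phi=\phi_{\xi_3,\Vert\nu\Vert}\circ\phi_{\xi_2,\Vert\mu\Vert}\circ\phi_{\xi_1,\Vert\lambda\Vert}$ has a fixed point $y_0$ in $\mathcal I^v=C\partial_\infty\mathcal I$. Putting $y_1=\phi_{\xi_1,\Vert\lambda\Vert}(y_0)$ and $y_2=\phi_{\xi_2,\Vert\mu\Vert}(y_1)$, the relation $\phi_{\xi_3,\Vert\nu\Vert}(y_2)=\phi(y_0)=y_0$ closes up a triangle $T(y_0,y_1,y_2)$ in $\mathcal I^v$ whose sides have numerical lengths $\Vert\lambda\Vert,\Vert\mu\Vert,\Vert\nu\Vert$ and ideal directions $\xi_1,\xi_2,\xi_3$. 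Following the transfer argument of~\ref{ssseTrans}, I identify $C^v$ with a quarter $Q$ of $\mathcal I^v$ and check that $pr_{C^v}(\xi_1,\Vert\lambda\Vert)=\lambda$: the $C^v$-direction of $\xi_1$ is that of $N\lambda$, hence of $\lambda$, while the numerical length is $\Vert\lambda\Vert$; similarly $pr_{C^v}(\xi_2,\Vert\mu\Vert)=\mu$ and $pr_{C^v}(\xi_3,\Vert\nu\Vert)=\nu$. Hence $T(y_0,y_1,y_2)$ has $C^v$-side lengths $(\lambda,\mu,\nu)$, that is $(\lambda,\mu,\nu)\in\mathcal T(\mathbb A^v)$.

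To conclude I would invoke the comparison of apartments. The witness apartment $\mathbb A^v$ of $\mathcal I^v$ is $\mathbb A$ carrying only the vectorial walls $M(\alpha,0)$ through the special vertex $0$, so its wall set $\mathcal M'=\{M(\alpha,0)\mid\alpha\in\Phi\}$ is contained in $\mathcal M$ while the pair $(V,W^v)$ is unchanged. The final assertion of Corollary~\ref{corSatTrip} then gives $\mathcal T(\mathbb A^v)\subset\mathcal T(\mathbb A)=\mathcal T$, whence $(\lambda,\mu,\nu)\in\mathcal T$, as desired. The main obstacle is not in this last assembly but is already absorbed into Theorem~\ref{thFixe}; here the only points demanding care are verifying that the $C^v$-lengths of the transferred triangle are exactly $(\lambda,\mu,\nu)$ and observing that the passage from $\mathcal I$ to $\mathcal I^v$ only \emph{discards} walls and never creates new ones, so that the containment of triangle cones goes in the right direction.
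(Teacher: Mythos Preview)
Your proposal is correct and follows essentially the same route as the paper's own proof: Gauss map, rescale the weights by $1/N$ (semi-stability is homogeneous), apply Theorem~\ref{thFixe} to obtain a triangle in $\mathcal I^v$, then invoke the last clause of Corollary~\ref{corSatTrip} to pass back to $\mathcal I$. You have simply unpacked explicitly what the paper compresses into ``le raisonnement ci-dessus permet de lui associer un triangle dans $\mathcal I$''.
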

\begin{proof} S'il existe un triangle de longueurs de c\^ot\'es $N\lambda,N\mu,N\nu$, on lui associe une configuration semi-stable $((\xi_1,m_1),(\xi_2,m_2),(\xi_3,m_3))$ avec  $N\lambda=pr_{C^v}(\xi_1,m_1)$, $N\mu=pr_{C^v}(\xi_2,m_2)$ et  $N\nu=pr_{C^v}(\xi_3,m_3)$, cf. \ref{prConfSS}. la configuration $((\xi_1,m_1/N),(\xi_2,m_2/N),(\xi_3,m_3/N))$ est encore semi-stable (\ref{ssseConf}) et le raisonnement ci-dessus permet de lui associer un triangle dans $\mathcal I$ de longueurs de c\^ot\'es $\lambda$, $\mu$ et $\nu$.
\end{proof}


\bigskip\bigskip\bigskip
\section{Le th\'eor\`eme de saturation}
\label{seSat}

\medskip
On fait le point sur les \'etapes de la d\'emonstration du Th\'eor\`eme \ref{thSat} qui sont d\'ej\`a d\'emontr\'ees. L'\'etape 1) a \'et\'e vue dans la section \ref{seDepli}, voir la remarque  \ref{remHecke}.2.
 Les \'etapes 2) et 3) ont \'et\'e d\'emontr\'ees dans la Section \ref{seGauss}. Il reste \`a montrer les \'etapes 4) et 5).

 On a une configuration semi-stable $\xi=((\xi_1,m_1),(\xi_2,m_2),(\xi_3,m_3))$ avec $\lambda=pr_{C^v}(\xi_1,m_1)$,  $\mu=pr_{C^v}(\xi_2,m_2)$ et $\nu=pr_{C^v}(\xi_3,m_3)$ dans $ P^{\vee+}$; on supposera d\`es le th\'eor\`eme \ref{thFixSom} que $\lambda+\mu+\nu\in Q^\vee$.

\subsection{Facteurs de saturation et action de $P^\vee/Q^\vee$}

On note $\theta$ la plus grande racine et $m_i$ son coefficient sur la racine $\alpha_i$, $\theta = \sum_{i=1}^{l}m_i\alpha_i$. L'alc\^ove fondamentale $\mathfrak a$ est d\'etermin\'ee par les in\'equations :
$$
\alpha_i(v) \geqslant 0,\ \forall i=1,...,l\quad;\quad \theta(v)\leqslant 1.
$$ Si on consid\`ere les poids fondamentaux $\varpi_1,...,\varpi_l$ alors $P^\vee = \oplus\mathbb Z\varpi_i$ et
$$
\mathfrak  a = \{\sum x_i\varpi_i\mid x_i\geqslant 0,\ \sum_ix_i\leqslant 1 \}\ .
$$ Les sommets de $\mathfrak  a$ sont donc $(0,...,0); (1/m_1,0,...,0);\cdots ; (0,...,0,1/m_l)$ dans la base $(\varpi_1,...,\varpi_l)$. De plus, on sait que $P^\vee$ est simplement transitif sur les sommets sp\'eciaux.

\begin{lem}
Le plus petit entier $k\in\mathbb N^*$ tel que, pour tout sommet $s$ de $\mathbb A$, $ks$ est un sommet sp\'ecial est l'entier $k = k_\Phi = ppcm (m_1,...,m_l)$.
\end{lem}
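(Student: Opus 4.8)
The plan is to reduce the statement to an elementary divisibility computation on the vertices of the fundamental alcove $\mathfrak a$, exploiting the two facts just recalled: that $P^\vee$ is exactly the set of special vertices, and that $W^a=W^v\ltimes Q^\vee$ acts simply transitively on the alcoves of $\mathbb A$. First I would note that any vertex $s$ of $\mathbb A$ is a vertex of some alcove, hence of the form $s=w\cdot v$ for some $w\in W^a$ and some vertex $v$ of $\mathfrak a$. Writing $w=(\bar w,q)$ with $\bar w\in W^v$ and $q\in Q^\vee$, so that $w\cdot x=\bar w(x)+q$, one gets $ks=\bar w(kv)+kq$. Since $kq\in Q^\vee\subseteq P^\vee$ and since $\bar w$ stabilise le r\'eseau des copoids $P^\vee$, we have $ks\in P^\vee$ if and only if $kv\in P^\vee$. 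Thus it suffices to understand which integer multiples of the vertices of $\mathfrak a$ are special.

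Next I would run the computation on $\mathfrak a$ itself. Its vertices are $0$ and the points $\varpi_i/m_i$ for $i=1,\dots,l$, expressed in the basis $(\varpi_1,\dots,\varpi_l)$. The vertex $0\in P^\vee$ imposes no condition. For $v=\varpi_i/m_i$ one has $kv=(k/m_i)\varpi_i$, and since $P^\vee=\bigoplus_j\mathbb Z\varpi_j$, this lies in $P^\vee$ if and only if $m_i\mid k$. Combining this over all vertices of $\mathfrak a$ (and, via the reduction above, over all vertices of $\mathbb A$), the property ``$ks$ est un sommet sp\'ecial pour tout sommet $s$ de $\mathbb A$'' is equivalent to ``$m_i\mid k$ for every $i$'', i.e. to $\mathrm{ppcm}(m_1,\dots,m_l)\mid k$.

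The least positive integer with this property is therefore $\mathrm{ppcm}(m_1,\dots,m_l)=k_\Phi$, which is the claim. I expect the only delicate point to be the reduction in the first paragraph: one must check that scaling by $k$ is compatible with the $W^a$-action, and this works precisely because the translation part of $W^a$ lies in $Q^\vee$ (so that multiplication by the integer $k$ keeps it inside $Q^\vee\subseteq P^\vee$), while the linear part lies in $W^v$ (which preserves $P^\vee$). Once this is settled, the remainder is the divisibility argument above, with no further obstacle.
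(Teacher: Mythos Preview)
Your argument is correct and follows essentially the same strategy as the paper's: test on the vertices of $\mathfrak a$, then use transitivity on alc\^oves to propagate to all vertices of $\mathbb A$. You are in fact more careful than the paper, which invokes simple transitivity of $Q^\vee$ on the alc\^oves (it is $W^a$ that is simply transitive); your reduction via $W^a=W^v\ltimes Q^\vee$, checking separately that the linear part preserves $P^\vee$ and that the translation part stays in $Q^\vee\subset P^\vee$ after multiplication by $k$, is the right way to make this step precise.
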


\begin{proof}Si on teste sur les sommets de $\mathfrak  a$, il est clair que $k$ est comme indiqu\'e. Mais $Q^\vee\subset P^\vee$ est simplement transitif sur les alc\^oves donc $Q^\vee\cdot \{$sommets de $\mathfrak  a\} = \{$sommets de $\mathbb A\}$. D'o\`u le r\'esultat.
\end{proof}

\bigskip
On s'int\'eresse maintenant \`a l'action de $P^\vee$ sur $\mathfrak  a$. Soit $\lambda\in P^\vee$, on note $\tau_\lambda$ la translation associ\'ee. On pose $\mathfrak  a' = \tau_\lambda(\mathfrak a)$. Alors il existe un unique $w_\lambda\in W^a$ tel que $w_\lambda \mathfrak  a' = \mathfrak a$. On note $\varphi_\lambda = w_\lambda\circ \tau_\lambda$; si $\lambda\in Q^\vee, \tau_\lambda\in W^a$ et $\varphi_\lambda = Id$. Si $\mu$ est un autre \'el\'ement de $P^\vee$, alors
$$
\varphi_\lambda\varphi_\mu = w_\lambda\tau_\lambda w_\mu\tau_\mu = w_\lambda (\tau_\lambda w_\mu\tau_\lambda^{-1})\tau_\lambda\tau_\mu \in W^a\tau_\lambda\tau_\mu
$$ et $\varphi_\lambda\varphi_\mu (\mathfrak a) = \mathfrak a$. Donc $\varphi_\lambda\varphi_\mu = \varphi_{\lambda+\mu}$. Ce qui donne bien une action de $P^\vee/Q^\vee$ sur l'alc\^ove $\mathfrak  a$.

Comme le groupe est suppos\'e presque simple, cette action se traduit par une permutation des sommets (ou des cloisons) de $\mathfrak  a$, donc par une action sur le diagramme de Dynkin compl\'et\'e. Si on \'etiquette les sommets par les entiers $m_i$ et qu'on met $1$ pour celui qu'on rajoute, on peut remarquer que $P^\vee/Q^\vee$ agit simplement transitivement sur les sommets affect\'es d'un coefficient $1$, c'est-\`a-dire, les sommets sp\'eciaux de l'alc\^ove $\mathfrak  a$.

\bigskip

\noindent{\bf N.B. } $Aut ($Dynkin compl\'et\'e$) = Aut ($Dynkin $) \ltimes (P^\vee/Q^\vee)$.

\bigskip

On a donc des tables faciles \`a \'etablir de $k$ et de $\vert P^\vee/Q^\vee\vert$ (not\'es $k_R$ et $i$ dans \cite[p. 49]{KLM3}).

\subsection{Le point fixe est un sommet}
\label{ssePtFixe}

On sait que l'application 
$\phi = \phi_1\circ\phi_2\circ\phi_3$ associ\'ee \`a la configuration semi-stable $\xi=((\xi_1,m_1),(\xi_2,m_2),(\xi_3,m_3))$ admet un point fixe $x_0$ dans $\mathcal I$. Mais dans un appartement contenant $\xi_1$ et $x$, $\phi_1(x)$ est le translat\'e de $x$ d'une longueur $m_1$ dans la direction $\xi_1$, donc selon un vecteur  $\lambda'\in W^v.\lambda$. Or $\lambda\in P^\vee$, la translation associ\'ee $\tau_{\lambda^^d4}$ envoie donc facettes sur facettes. Ainsi, $\phi_1$ est une application simpliciale. De m\^eme pour $\phi_2$ et $\phi_3$. De plus 
 $\tau_{\lambda^^d4}$ envoie une alc\^ove sur une alc\^ove et permute les types de facettes par l'action de $\overline\lambda\in P^\vee/Q^\vee$; donc, si $\lambda+\mu+\nu\in Q^\vee$, $\phi$ conserve les types. Nous avons  d\'emontr\'e le premier point du th\'eor\`eme suivant, le second est laiss\'e \`a la sagacit\'e du lecteur.

\begin{thm}[Kapovich-Leeb-Millson \cite{KLM3}]
\label{thFixSom}
\noindent
\begin{itemize}
\item Si $\lambda + \mu +\nu \in Q^\vee$ alors $\phi$ fixe un sommet.
\item Si $\phi$ fixe un sommet sp\'ecial alors $\lambda + \mu +\nu \in Q^\vee$.
\end{itemize}
\end{thm}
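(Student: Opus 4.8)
\noindent\emph{Esquisse de d\'emonstration du second point.} Le plan est de suivre l'action de $\phi$ sur les types de sommets de $\mathcal I$. Comme on vient de le voir juste avant l'\'enonc\'e, chaque $\phi_i$ est simpliciale et co\"{\i}ncide localement avec la translation d'un vecteur de $W^v\lambda$ (resp. $W^v\mu$, $W^v\nu$). Or, pour tout $w\in W^v$ et tout $\kappa\in P^\vee$, on a $w\kappa-\kappa\in Q^\vee$, de sorte que la permutation des types induite par une telle translation ne d\'epend que de la classe du vecteur directeur dans $P^\vee/Q^\vee$, et non du repr\'esentant $w\kappa$ choisi. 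Ainsi $\phi_1,\phi_2,\phi_3$ agissent sur les types via $\overline\lambda,\overline\mu,\overline\nu$ respectivement; comme $P^\vee/Q^\vee$ est ab\'elien, l'ordre de composition est indiff\'erent et $\phi$ agit sur les types via l'unique \'el\'ement $\sigma=\overline{\lambda+\mu+\nu}\in P^\vee/Q^\vee$.

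Supposons maintenant que $\phi$ fixe un sommet sp\'ecial $s$, de type $t(s)$. Comme $\phi$ est simpliciale et permute les types selon $\sigma$, on a $t(\phi(s))=\sigma\cdot t(s)$; mais $\phi(s)=s$, d'o\`u $\sigma\cdot t(s)=t(s)$. Puisque $s$ est sp\'ecial, $t(s)$ est pr\'ecis\'ement l'un des sommets du diagramme de Dynkin compl\'et\'e affect\'es du coefficient $1$.

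Je conclurais alors gr\^ace \`a la remarque, d\'ej\`a \'etablie plus haut, selon laquelle $P^\vee/Q^\vee$ agit simplement transitivement sur ces sommets de coefficient $1$ (i.e. sur les types de sommets sp\'eciaux de $\mathfrak a$). Cette action \'etant en particulier libre, le seul \'el\'ement qui fixe $t(s)$ est l'identit\'e; donc $\sigma=Id$, ce qui signifie exactement $\lambda+\mu+\nu\in Q^\vee$. Le point le plus d\'elicat me para\^{\i}t \^etre la premi\`ere \'etape: v\'erifier proprement que la permutation des types attach\'ee \`a $\phi_i$ est bien intrins\`eque, ind\'ependante \`a la fois du repr\'esentant dans $W^v\lambda$ et de l'appartement local servant \`a d\'ecrire $\phi_i$; une fois ce fait acquis, la conclusion est purement combinatoire et repose uniquement sur la libert\'e de l'action de $P^\vee/Q^\vee$ sur les sommets sp\'eciaux de l'alc\^ove.
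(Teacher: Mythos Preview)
Your argument is correct and is precisely the one the paper intends: the text just before the theorem establishes that each $\phi_i$ permutes types via $\overline\lambda,\overline\mu,\overline\nu\in P^\vee/Q^\vee$, and the simple transitivity of $P^\vee/Q^\vee$ on the types of special vertices is exactly the fact recorded a few lines earlier; the paper then explicitly leaves the second point ``\`a la sagacit\'e du lecteur'', and your deduction (free action $\Rightarrow$ the stabilizer of a special type is trivial $\Rightarrow$ $\overline{\lambda+\mu+\nu}=0$) is the expected completion.
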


On a donc un triangle $T= T(x_0,x_1,x_2)$ dans $\mathcal I$ avec $d_{C^v}(x_0,x_1) = \lambda$, $d_{C^v}(x_1,x_2) = \mu$ et $d_{C^v}(x_2,x_0) = \nu$ et $x_0$ un sommet de $\mathcal I$. Comme $\lambda,\mu,\nu\in P^\vee$, $x_1$ et $x_2$ sont aussi des sommets. On peut supposer que $x_0$ et $x_1$ sont dans notre appartement t\'emoin favori $A$. On choisit une alc\^ove $\mathfrak a_0$ de $A$ contenant $x_0$. On r\'etracte le triangle $T$ sur l'appartement $A$ par $\rho_{A,\mathfrak a_0}$. Alors, d'apr\`es les r\'esultats de la section \ref{seDepli}, on obtient un polygone $P(x_0, x_1, y_1,...,y_n, x'_2)$, o\`u $x'_2 =  \rho_{\mathbf a_0,A} (x_2)$ et $P(x_1, y_1,...,y_n, x'_2)$ est un chemin de Hecke de type $\mu$.

Maintenant, on fait dans $A$ une homoth\'etie de centre $0$ et de rapport $k$, alors le polygone $ P(x_0, x_1, y_1,...,y_n, x'_2)$ se transforme en un polygone $P(x'_0, x'_1, y'_1,...,y'_n, x''_2)$, o\`u $x'_0,x'_1,x''_2$ sont des sommets sp\'eciaux de $A$, $d_{C^v}(x'_0,x'_1) = k\lambda$, $d_{C^v}(x_2,x_0) = k\nu$ et $P(x'_1, y'_1,...,y'_n, x''_2)$ est un chemin de Hecke de type $k\mu$. Malheureusement, en g\'en\'eral, ce n'est pas un chemin LS... Mais, on a bien d\'emontr\'e l'\'etape 4).

\subsection{Quand est-ce que Hecke vaut LS ?}

La condition LS est un peu myst\'erieuse par rapport \`a Hecke. On utilise le lemme <<grossier>> suivant.

\begin{lem}
\label{leGrossier}
Soit $\pi:[0,1]\to \mathbb A$ un chemin de Hecke (par rapport \`a $-C^v$) de type $\eta\in C^v\cap P^\vee=P^{\vee+}$. Si les points o\`u $\pi$ est pli\'e sont des sommets sp\'eciaux, alors $\pi$ est LS (et dans ce cas LS $\Leftrightarrow$ Hecke $\Leftrightarrow$ (vrai) billard pli\'e positivement).
\end{lem}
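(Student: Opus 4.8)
Le plan est de v\'erifier les conditions {\bf (LS0)}--{\bf (LS3)}, l'id\'ee directrice \'etant que l'hypoth\`ese de sp\'ecialit\'e rend {\bf (LS2)} ind\'ependante du choix de la cha\^{\i}ne, ce qui laisse toute libert\'e pour satisfaire {\bf (LS1)}+{\bf (LS3)} au moyen d'une cha\^{\i}ne satur\'ee. \'Ecrivons $\pi=\pi(\eta,\pi_0,{\mathbf w},{\mathbf a})$ avec ${\mathbf a}=(0=a_0<\cdots<a_m=1)$, de sorte que $\pi'_-(a_j)=w_j\eta$ et $\pi'_+(a_j)=w_{j+1}\eta$. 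Les points anguleux sont les $\pi(a_j)$ pour $1\leq j\leq m-1$, suppos\'es sp\'eciaux ; le chemin partant lui aussi d'un sommet sp\'ecial (comme c'est le cas dans l'application \`a l'\'etape 5), on a $\pi_0=\pi(a_0)\in P^\vee$, et comme $\eta\in P^{\vee+}$, {\bf (LS0)} est acquise.

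Le point-cl\'e sera de montrer que $a_j\eta\in P^\vee$ pour $1\leq j\leq m-1$. Pour chaque segment $l\leq m-1$ on a $\pi(a_l)-\pi(a_{l-1})=d_l\,w_l\eta$ avec $d_l=a_l-a_{l-1}$ ; ses deux extr\'emit\'es \'etant sp\'eciales, $d_l\,w_l\eta\in P^\vee$. Comme $P^\vee$ est stable par $W^v$, on en tire $d_l\eta=w_l^{-1}(d_l\,w_l\eta)\in P^\vee$, puis, en sommant, $a_j\eta=\sum_{l=1}^{j}d_l\eta\in P^\vee$. Il en r\'esulte aussit\^ot que {\bf (LS2)} vaut \emph{pour n'importe quelle} cha\^{\i}ne reliant $w_j$ \`a $w_{j+1}$ : pour toute racine $\beta$ et tout $\sigma\in W^v$, on a $a_j\beta(\sigma\eta)=\beta\big(\sigma(a_j\eta)\big)\in\mathbb Z$, puisque $\sigma(a_j\eta)\in P^\vee$ et que $\beta\in Q$ est enti\`ere sur $P^\vee$.

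Il restera \`a produire, \`a chaque $a_j$, une cha\^{\i}ne v\'erifiant {\bf (LS1)}+{\bf (LS3)}. Comme $\pi$ est de Hecke, c'est un vrai billard pli\'e positivement, donc le lemme caract\'erisant le pliage positif donne $w_+(a_j)\leq w_-(a_j)$, c'est-\`a-dire $\overline{w_{j+1}}\leq\overline{w_j}$ dans $W^v/W^v_\eta$. L'ordre de Bruhat sur $W^v/W^v_\eta$ \'etant gradu\'e par $\ell_\eta$ et ses relations de couverture \'etant r\'ealis\'ees par des r\'eflexions (\`a racine positive), on obtient une cha\^{\i}ne satur\'ee $\overline{w_j}=\sigma_{j,0}>\cdots>\sigma_{j,s_j}=\overline{w_{j+1}}$ avec $\sigma_{j,i}=r_{\beta_{j,i}}\sigma_{j,i-1}$ et $\ell_\eta(\sigma_{j,i})=\ell_\eta(\sigma_{j,i-1})-1$ : ce sont {\bf (LS1)} et {\bf (LS3)}. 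Jointe au paragraphe pr\'ec\'edent, cette cha\^{\i}ne satisfait aussi {\bf (LS2)}, donc $\pi$ est LS. Les \'equivalences annonc\'ees suivent : LS $\Rightarrow$ Hecke par la proposition \ref{prLS1et2}, Hecke $\Rightarrow$ billard pli\'e positivement est d\'ej\`a acquis, et sa r\'eciproque vient de ce que {\bf (H3)} est automatique aux sommets sp\'eciaux.

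Le point d\'elicat est bien la n\'ecessit\'e d'une \emph{seule} cha\^{\i}ne v\'erifiant \`a la fois {\bf (LS2)} et {\bf (LS3)} --- c'est pr\'ecis\'ement ce qui peut \'echouer sans l'hypoth\`ese de sp\'ecialit\'e (d'o\`u le contre-exemple en $G_2$) : la cha\^{\i}ne issue de Hecke donne {\bf (LS1)}+{\bf (LS2)} mais pas forc\'ement {\bf (LS3)}, tandis qu'une cha\^{\i}ne satur\'ee donne {\bf (LS3)} sans contr\^ole a priori sur {\bf (LS2)}. La sp\'ecialit\'e des points anguleux l\`eve cette tension en for\c{c}ant $a_j\eta\in P^\vee$, rendant {\bf (LS2)} valable pour toutes les cha\^{\i}nes \`a la fois.
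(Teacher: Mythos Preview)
Your proof is correct and follows the same core strategy as the paper's: use that Hecke implies $w_+(a_j)\leq w_-(a_j)$ in $W^v/W^v_\eta$, then take a saturated chain in the parabolic Bruhat order to secure {\bf (LS1)}+{\bf (LS3)}. The paper's own argument is terser: it invokes Proposition~\ref{prLS1et2} (whose proof is deferred to \cite{GR}) to say ``il suffit de prouver {\bf (LS3)}'', then writes down the saturated chain and stops --- leaving implicit why that new chain still satisfies {\bf (LS2)}. Your computation that $a_j\eta\in P^\vee$ (via $d_l\,w_l\eta\in P^\vee$ at consecutive special vertices, then applying $w_l^{-1}$ and summing) is precisely what fills that gap, and it does so in a self-contained way that avoids appealing to \cite{GR}. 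One small point: as you correctly flag, the hypothesis $\pi_0\in P^\vee$ is not part of the lemma's statement but is needed for {\bf (LS0)} and for your induction base $l=1$; the paper silently assumes it too, and in the application (\'etape~5) it is indeed satisfied.
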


\noindent {\it D\'emonstration. } Pour tout $t$, 
il existe une suite $\xi_0 = \pi'_-(t), \xi_1,...,\xi_m = \pi'_+(t)$ et des racines positives $\beta_1,...,\beta_m$ telles que
\begin{description}
\item[(H1)] $\quad r_{\beta_i}(\xi_{i-1}) = \xi_i $
\item[(H2)] $\quad \beta_i(\xi_{i-1}) <0\ $
\item[(H3)] $r_{\beta_i}\in W^v_{\pi(t)}$, i.e. $\beta_i(\pi(t))\in\mathbb Z$.
\end{description}

On note $w_\pm(t)\in W^v$ l'\'el\'ement de plus petite longueur tel que $\pi'_\pm(t) = w_\pm(t) \eta$. Alors, on a
$$
w_-(t) = w_0> w_1 = r_{\beta_1}w_0 > \cdots w_m = r_{\beta_m}w_{m-1} = w_+(t)\ .
$$ D'apr\`es la proposition \ref{prLS1et2}, il suffit de prouver la condition {\bf (LS3)}. Mais, on sait que pour tout $t$, $w_+(t) \leq w_-(t)$. Quand il y a \'egalit\'e, il n'a rien \`a faire. Quand il n'y a pas \'egalit\'e, comme les points o\`u le chemin se plie sont des sommets sp\'eciaux, on peut \'ecrire
$$
w_+(t) = r_{\beta'_p}\cdots r_{\beta'_1} w_-
$$ avec d\'ecroissance de $1$ des longueurs. Donc $\pi$ est bien LS.
\qed

Malheureusement, il n'est pas clair que, m\^eme apr\`es homoth\'etie, un chemin de Hecke se plie en des sommets sp\'eciaux. C'est la raison des d\'efinitions suivantes.

\subsection{Les chemins LS g\'en\'eralis\'es}

Soit $\eta\in C^v\cap P^\vee$, on choisit une d\'ecomposition $\eta = \eta_1+ \cdots+ \eta_l$ avec 
 $\eta_i\in\mathbb N\varpi_i\subset P^{\vee+}$. Soit $\pi_\eta =\pi_{\eta_1}\star\cdots\star\pi_{\eta_l}$ la concat\'enation des segments $\pi_{\eta_1}=[0,\eta_1]$ et $\pi_{\eta_i}=[\eta_1+...+\eta_{i-1},\eta_1+...+\eta_{i}]$ pour $i\geq2$ (\'evidemment, ce chemin d\'epend de la d\'ecomposition de $\eta$). Soit $\mathfrak a$ une alc\^ove.

\begin{dfn}

Un chemin de Hecke (resp. LS) g\'en\'eralis\'e de type $\eta$ (par rapport \`a $-C^v$ ou \`a $\mathfrak a$) est un chemin $p = p_1\star\cdots\star p_l$ o\`u les $p_i$ sont de Hecke (resp. LS) de type $\eta_i$ (par rapport \`a $-C^v$ ou \`a $\mathfrak a$), $p_i(0)=p_{i-1}(1)$  et pour tout $1\leqslant i\leqslant l$, il existe un vecteur $\xi_i$, une chambre qui contient \`a la fois $\xi_i$ et $(p_i)'_+(0)$ et une $(W^v_{p_i(0)},\vec a_{p_i(0)})-$cha\^{\i}ne de $(p_i)'_-(0)$ \`a $\xi_i$ (o\`u $\vec a_{p_i(0)}$ d\'esigne $-C^v$ ou la chambre d\'efinie dans le th\'eor\`eme \ref{thPliage}).

Dans le cas LS, on suppose de plus que $p_0(0)$ est sp\'ecial.
\end{dfn}

Par exemple, $\pi_\eta$ est un chemin LS g\'en\'eralis\'e de type $\eta$. De plus, on a les propri\'et\'es suivantes (d\'emontr\'ees par Kapovich et Millson et par Littelmann) pour les vrais chemins LS g\'en\'eralis\'es (i.e. par rapport \`a $-C^v$):
\begin{itemize}
\item L'ensemble des chemins LS g\'en\'eralis\'es de type $\eta$ et d'origine $0$ est stable par les op\'erateurs $e_\alpha$ et $f_\alpha$.
\item Le seul chemin LS g\'en\'eralis\'e de type $\eta$ d'origine $0$ et contenu dans $C^v$ est $\pi_\eta$.
\item Tout chemin LS g\'en\'eralis\'e de type $\eta$ est le transform\'e, par des op\'erateurs $f_\alpha$, de $\pi_\eta$.
\item Le th\'eor\`eme de Littelmann sur le produit tensoriel est toujours valable avec des chemins LS g\'en\'eralis\'es de type $\eta$.
\end{itemize}

 On peut faire le lien entre ces chemins  LS g\'en\'eralis\'es et les galeries LS. Il suffit de choisir dans \ref{sseGalChe} la galerie $\gamma_\eta$ contenant le chemin $\pi_\eta$ ci-dessus et non le segment $[0,\eta]$.

\subsection{Conclusion}

On va montrer ici l'\'etape 5. On consid\`ere le polygone $P(x'_0, x'_1, y'_1,...,y'_n, x''_2)$ obtenu \`a la section \ref{ssePtFixe}, o\`u $x'_0,x'_1,x''_2$ sont des sommets sp\'eciaux de $A$, $d_{C^v}(x'_0,x'_1) = k\lambda$, $d_{C^v}(x_2'',x'_0) = k\nu$ et $P(x'_1, y'_1,...,y'_n, x''_2)$ est un chemin de Hecke de type $k\mu$ par rapport \`a une alc\^ove $\mathfrak a_0$ contenant $x_0$. On red\'eplie le chemin dans l'immeuble pour obtenir un triangle sur des sommets sp\'eciaux $T(x'_0, x'_1,z_2)$ de longueurs de c\^ot\'es $(k\lambda, k\mu, k\nu)$. Dans un appartement contenant les sommets $x'_1$ et $z_2$, on remplace le segment $[x'_1,z_2]$ par le chemin $\pi = x'_1 + \pi_{\eta}$, o\`u $\pi_{\eta}$ est le chemin associ\'e ci-dessus \`a une d\'ecomposition de $\eta = k\mu$. Ce chemin n'emprunte que des ar\^etes et donc chaque fois qu'il rencontre un mur, c'est en un sommet.

On r\'etracte sur $A$ par $\rho = \rho_{A,\mathfrak a_0}$. Alors on obtient un polygone $P(x'_0, x'_1, z_1,...,z_m, x''_2)$ tel que 
$\rho\pi = P(x'_1, z_1,...,z_m, x''_2)$ est un chemin qui est pli\'e uniquement en des sommets et de Hecke g\'en\'eralis\'e de type $\eta$ par rapport \`a  $\mathfrak a_0$. C'est clair pour les images des segments de $\pi$. Pour les points anguleux, il faut remarquer que deux ar\^etes d'une m\^eme alc\^ove auront des images dans une m\^eme alc\^ove.

Comme $x'_0$ est sp\'ecial, on peut supposer $x'_0=0$. On note $C^v$ la chambre de Weyl oppos\'ee \`a $\mathfrak a_0$. On replie $A$, en accord\'eon, sur $C^v$ par la projection $pr_{C^v}:A\to C^v$; on obtient ainsi un polygone $P(x'_0=0,x''_1,z'_1,...,z'_m,x'''_2)$ avec $d_{C^v}(x'_0,x''_1)=k\lambda$ (i.e. $x''_1=k\lambda$), $d_{C^v}(x'''_2,x'_0)=k\nu$ (i.e. $x'''_2=k\nu^*$) et $p=P(x''_1,z'_1,...,z'_m,x'''_2)$ est un chemin pli\'e uniquement en des sommets. D'apr\`es le lemme suivant $p$ est de Hecke par rapport \`a $\mathfrak a_0$ et donc de Hecke par rapport \`a $-C^v$ (remarque \ref{remHecke}.2).

Enfin, on applique l'homoth\'etie de centre $x'_0$ et de rapport $k$, on obtient un chemin de Hecke g\'en\'eralis\'e dont les coudes sont des sommets sp\'eciaux. Par le lemme \ref{leGrossier}, ce chemin est LS g\'en\'eralis\'e; le th\'eor\`eme de d\'ecomposition \ref{thDecomp} s'applique et prouve que  $\big (V(k^2\lambda)\otimes V(k^2\mu)\otimes V(k^2\nu)\big )^{G^\vee} \ne \{0\}$. On a bien achev\'e  l'\'etape 5) et donc la d\'emonstration du th\'eor\`eme de saturation \ref{thSat}

\begin{lem} Soit $\mathfrak a$ l'alc\^ove de $A$ de sommet $0$ et oppos\'ee \`a la chambre de Weyl $C^v$. Soit $p$ un chemin de Hecke g\'en\'eralis\'e (ou non g\'en\'eralis\'e) de type $\eta$ par rapport \`a $\mathfrak a$ dans $A$. Alors le chemin repli\'e $pr_{C^v}\circ p$ est de Hecke g\'en\'eralis\'e (ou non g\'en\'eralis\'e) de type $\eta$ par rapport \`a $\mathfrak a$ dans $C^v$.
\end{lem}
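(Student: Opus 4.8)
The plan is to verify, corner by corner, that the folded path $q=pr_{C^v}\circ p$ satisfies the chain condition of Theorem~\ref{thPliage}. Note first that $pr_{C^v}$ is piecewise linear, acting as $w^{-1}$ on the chamber $wC^v$; hence $pr_{C^v}(q'(t))=pr_{C^v}(p'(t))$, so $q$ is again of type $\eta$ (and each folded piece of type $\eta_i$), and $q([0,1])\subset C^v$. Because the image lies in $C^v$, Remark~\ref{remHecke}.1 (applied with $Q=C^v$ and $\mathfrak a$ opposite to it) lets me replace ``Hecke with respect to $\mathfrak a$'' by ``Hecke with respect to $-C^v$''; so it suffices to exhibit at each corner $t$ of $q$ a $(-C^v)$-chain from $q'_-(t)$ to $q'_+(t)$ satisfying {\bf (H1)}--{\bf (H3)}. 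I would split the corners according to whether $p(t)$ lies on a vector wall (a wall through $0$), which is exactly where the local fold element can jump.

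First, the easy case: $p(t)$ lies on no vector wall. Then $p$ stays in a single open chamber $wC^v$ near $t$, the fold acts as the constant isometry $w^{-1}$, and $q'_\pm(t)=w^{-1}p'_\pm(t)$. I transport the given $(W^v_{p(t)},\vec a_{p(t)})$-chain by $w^{-1}$; its steps become reflections in the walls $w^{-1}M_k$, which pass through $q(t)=w^{-1}p(t)$, so {\bf (H3)} is preserved. The only point to check is the orientation, and here the hypothesis that $\mathfrak a$ has vertex $0$ is decisive: since $p(t)$ avoids every vector wall, the direction $\overrightarrow{p(t)0}$ lies in the open chamber $\vec a_{p(t)}$, which therefore depends only on the ray toward $0$; as $w^{-1}$ fixes $0$ it sends this chamber to $\vec a_{q(t)}$, so the transported chain is genuinely a $(W^v_{q(t)},\vec a_{q(t)})$-chain, i.e. a $(-C^v)$-chain by Remark~\ref{remHecke}.1.

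The delicate case is $p(t)$ on a vector wall; this includes the new corners created where $p$ crosses such a wall transversally (there $w_+=s_M w_-$, and a direct computation gives $q'_+(t)=s_{M'}q'_-(t)$ with $M'=w_-^{-1}M$ a wall through $q(t)$, a single positive fold since $q$ reflects off $\partial C^v$ back into $C^v$), but also the mixed corners at special vertices, where the fold element jumps and the chamber ``toward $\mathfrak a$'' is genuinely ambiguous. The plan here is to argue entirely inside the tangent apartment $\Sigma_{q(t)}(\mathbb A)$: the differential of $pr_{C^v}$ folds the vector Coxeter complex $\Sigma_{p(t)}(\mathbb A)$ onto the dominant cone, which is precisely a retraction of the kind studied in~\ref{ssePliage}. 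I would feed the given $\vec a_{p(t)}$-chain into this picture and apply Propositions~\ref{prDepli} and~\ref{prChaines} to convert it into a $(-C^v)$-chain from $q'_-(t)$ to $q'_+(t)$ running along a minimal gallery, with all reflections in walls through $q(t)$ (so {\bf (H3)} holds, and $W^v_{q(t)}=W^v$ since we are at a vertex). This reassembly --- matching the ambiguous fold direction and restoring positivity with respect to $-C^v$ --- is where I expect the real work to lie, and it is the main obstacle of the lemma.

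Finally, for a generalized path $p=p_1\star\cdots\star p_l$ (the non-generalized case being $l=1$), I would run the two cases above at every corner of each folded piece $pr_{C^v}\circ p_i$, and treat the junction points $p_i(0)$ in the same way: the auxiliary datum at $p_i(0)$ --- a vector $\xi_i$, a chamber containing $\xi_i$ and $(p_i)'_+(0)$, and a chain from $(p_i)'_-(0)$ to $\xi_i$ --- transports under the fold exactly as the corner chains do (by transport in the easy case, by the pliage argument in the delicate one), producing the required junction datum for $pr_{C^v}\circ p$. Having shown that $q$ is Hecke with respect to $-C^v$ and that $q\subset C^v$, Remark~\ref{remHecke}.1 concludes that $q$ is Hecke (generalized of type $\eta$) with respect to $\mathfrak a$.
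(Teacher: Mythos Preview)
Your overall plan---verify the Hecke condition locally at every corner of the folded path, and treat separately the corners on or off a vector wall---is exactly the paper's strategy. The easy case (no vector wall through $p(t)$) is correct and matches the paper's argument.

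The gap is in your hard case. Your plan there is to invoke Propositions~\ref{prDepli} and~\ref{prChaines} on the tangent picture, but those results live in a \emph{thick} spherical building: the d\'epliage of~\ref{sseDepli} needs a half-apartment $\mathcal D$ leaving $B_i$, and the pliage of~\ref{ssePliage} compares a point $\eta$ opposite to $\xi$ that may lie outside the base apartment. Here everything happens inside the single apartment $A$; the tangent ``building'' at $p(t)$ is just the thin Coxeter complex, so that machinery is not available and does not do what you want. What you actually need---turning a $(W^v_{p(t)},\vec a_{p(t)})$-chain into a $(W^v_{q(t)},\vec a_{q(t)})$-chain when the folding element jumps---is a purely combinatorial statement in one apartment, and you have not supplied it.

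The paper sidesteps this difficulty by a simple device you should adopt: factor $pr_{C^v}$ as a composite of single foldings $\pi_D$ onto half-apartments $D\supset C^v$ bounded by walls $M\ni 0$, and show that each $\pi_D$ preserves the Hecke condition with respect to $\mathfrak a$. For a single fold the hard case becomes trivial: if $p(t)\in M$, then $(\pi_D\circ p)'_\pm(t)$ equals either $p'_\pm(t)$ or $s_M p'_\pm(t)$, and since $\mathfrak a\subset -C^v$ lies on the side of $M$ opposite to $D$, the chamber $\vec a_{p(t)}$ is on the non-$D$ side; so whenever a reflection $s_M$ is needed at either end, appending or prepending it to the given chain is a legitimate chain step (it moves away from $\vec a_{p(t)}$). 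No ambiguity of fold direction arises, and no building-theoretic input beyond the definition of a chain is required. Iterating over the successive walls gives the result.
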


\begin{proof} Le chemin $pr_{C^v}\circ p$ s'obtient \`a partir de $p$ par une suite de pliages r\'etractant $A$ sur un demi-appartement contenant $C^v$ et de mur contenant $0$. Soient donc $M$ un mur contenant $0$, $D$ le demi-appartement limit\'e par $M$ contenant $C^v$ et $\pi_D$ le pliage de $A$ sur $D$. On va montrer que $\pi_D\circ p$ est de Hecke (g\'en\'eralis\'e) par rapport \`a $\mathfrak a$. Cela se v\'erifie en chaque point $p(t)$ de $p$. Si $p(t)\notin M$, alors $\vec a_{p(t)}$ contient $\mathfrak a$ et sa sym\'etrique $s_M(\mathfrak a)$; de plus, au voisinage de $t$, $\pi_D\circ p$  est \'egal \`a $p$ ou \`a $s_M\circ p$. Donc $\pi_D\circ p$ v\'erifie encore la condition locale impos\'ee.

Supposons $p(t)\in M$. Par hypoth\`ese il existe une $(W^v_{p(t)},\vec a_{p(t)})-$cha\^{\i}ne de $p'_-(t)$ \`a $\xi$ o\`u $\xi$ est dans une m\^eme chambre que $p'_+(t)$. Donc $\pi_D(\xi)$ et $\pi_D(p'_+(t))=(\pi_D\circ p)'_+(t)$ sont dans une m\^eme chambre et du m\^eme c\^ot\'e de $M$. Ainsi $\pi_D(\xi)$ est \'egal \`a $\xi$ ou $s_M(\xi)$ avec comme positions:\quad$\xi,\vec a_{p(t)}\;\mid_M\;s_M(\xi)$. De m\^eme $(\pi_D\circ p)'_-(t)$ est \'egal \`a $p'_-(t)$ ou \`a $s_M(p'_-(t))$ avec cette fois :
\quad $s_M(p'_-(t)),\vec a_{p(t)}\;\mid_M\;p'_-(t)$. Ainsi en compl\'etant \'eventuellement, par le d\'ebut et/ou la fin, la $(W^v_{p(t)},\vec a_{p(t)})-$cha\^{\i}ne de $p'_-(t)$ \`a $\xi$, on obtient une $(W^v_{p(t)},\vec a_{p(t)})-$cha\^{\i}ne de $(\pi_D\circ p)'_-(t)$ \`a $\pi_D(\xi)$.
\end{proof}

\bigskip\bigskip\bigskip\bigskip

\medskip

Institut \'Elie Cartan, Unit\'e Mixte de Recherche 7502, Nancy-Universit\'e, CNRS

Boulevard des aiguillettes, BP 70239, 54506 Vand\oe uvre l\`es Nancy Cedex (France)

Stephane.Gaussent@iecn.u-nancy.fr ; Nicole.Bardy@iecn.u-nancy.fr ;

Cyril.Charignon@iecn.u-nancy.fr ; Guy.Rousseau@iecn.u-nancy.fr


\begin{thebibliography}{BGG2}

\bibitem{AB}
P. Abramenko et K. S. ~Brown, {\em Buildings: Theory and applications},  Grad. Texts in Math. {\bf 248}, Springer-Verlag, New-York (2008).

\bibitem{BaGa}
P. Baumann et S. Gaussent, {\em On Mirkovi\'c-Vilonen cycles and crystal combinatorics}, Represent. Theory {\bf 12} (2008), pp 83-130.

\bibitem{BH}
M. Bridson et A. Haefliger, {\em Metric spaces of non positive curvature}, Grundlehren der math. Wiss., {\bf 319}, Springer (1999).

\bibitem{B}
K. S. ~Brown, {\em Buildings},  Springer-Verlag, New-York (1989).

\bibitem{BT}
F. ~Bruhat et J. ~Tits, {\em Groupes r\'eductifs sur un corps local, I}, Publ. Math. I.H.E.S., {\bf 41}, (1972), pp. 5-252.

\bibitem{BT2}
F. ~Bruhat et J. ~Tits, {\em Groupes r\'eductifs sur un corps local, II}, Publ. Math. I.H.E.S., {\bf 60}, (1984), pp. 5-184.

\bibitem{GL}
S. Gaussent et P. Littelmann, {\em LS-galleries, the path model, and MV-cycles}, Duke Math. J. {\bf 127}, (2005), pp. 35--88.

\bibitem{GR}
S. Gaussent et G. Rousseau, {\em Kac-Moody groups, hovels and Littelmann paths}, Annales Inst. Fourier {\bf 58}, (2008), pp. 2605-2657.

\bibitem{KLM1}
M. Kapovich, B. Leeb et J.J. Millson, {\em Convex functions on symmetric spaces, side lengths of polygons and stability inequalities for weighted configurations at infinity}, J. Diff. Geometry. {\bf 81}, (2009), pp. 297-354.

\bibitem{KLM2}
M. Kapovich, B. Leeb et J.J. Millson, {\em Polygons in buildings and their refined side lengths}, Geometric And Functional Analysis, \`a para\^{\i}tre.

\bibitem{KLM3}
M. Kapovich, B. Leeb et J.J. Millson, {\em Polygons in symmetric spaces and buildings with applications to algebra}, Memoir Amer. Math. Soc. {\bf 896} (2008).

\bibitem{KM}
M. Kapovich et J.J. Millson, {\em A path model for geodesics in euclidean buildings and its applications to representation theory}, Geometry, Groups and Dynamics {\bf 2}, (2008), pp. 405-480.


\bibitem{KT}
A. Knutson et T. Tao, {\em The honeycomb model of $GL_n(\mathbb C)$ tensor products. I Proof of the saturation conjecture}, J. Amer. Math. Soc. {\bf 12}, (1999), pp. 1055-1090.

\bibitem{L1}
P. Littelmann, {\em A Littllewood-Richardson rule for symmetrizable Kac-Moody algebras}, Inventiones Math. {\bf 116}, (1994), pp. 329-346.

\bibitem{L2}
P. Littelmann, {\em Paths and root operators in representation theory}, Annals of Math. {\bf 142}, (1995), pp. 499-525.

\bibitem{MV}
I.~Mirkovi\'c and K.~Vilonen, \textit{Perverse sheaves on affine
Grassmannians and Langlands duality,} Math.\ Res.\ Lett.\ \textbf7
(2000), 13--24.

\bibitem{R}
M.~Ronan, {\em Lectures on Buildings},  Academic Press, (1989).

\bibitem{T74}
J.~Tits, {\em Buildings of spherical
type and finite $BN-$pairs, Lecture notes in Math. 386},  Springer-Verlag, Heidelberg (1974).




\end{thebibliography}
\end{document}